\theoremstyle{definition}
\theoremstyle{lemma}
\newtheorem{lemma}{Lemma}
\theoremstyle{theorem}
\newtheorem{theorem}{Theorem}
\theoremstyle{assumption}
\renewcommand{\hat}[1]{\widehat{#1}} 
\newcommand{\diff}[1]{\mathop{}\!{\mathrm{d}#1}} 
\newcommand{\pd}[2]{\frac{\partial#1}{\partial#2}}
\newcommand{\nor}[1]{\left\| #1 \right\|} 
\newcommand{\LRp}[1]{\left( #1 \right)} 
\newcommand{\LRs}[1]{\left[ #1 \right]} 
\newcommand{\LRa}[1]{\left\langle #1 \right\rangle} 
\newcommand{\LRb}[1]{\left| #1 \right|} 
\newcommand{\LRc}[1]{\left\{ #1 \right\}} 
\newcommand{\LRl}[1]{\left. #1 \right|} 
\newcommand{\jump}[1] {\ensuremath{\left\llbracket#1\right\rrbracket}} 
\newcommand{\avg}[1] {\ensuremath{\LRc{\!\!\LRc{#1}\!\!}}}
\newcommand{\note}[1]{#1}
\newcommand{\bnote}[1]{#1}
\newcommand{\rnote}[1]{#1}
\newcommand{\vnote}[1]{#1}
\newcommand{\gnote}[1]{#1}
\newtheorem{remark}{Remark}
\date{}
\begin{document}


\begin{frontmatter}
\title{An artificial viscosity approach to high order \\ entropy stable discontinuous Galerkin methods}
\author[rice]{Jesse Chan}
\ead{jesse.chan@rice.edu}
\address[rice]{Department of Computational Applied Mathematics and Operations Research\\Rice University, 6100 Main St, Houston, TX, 77005}
\begin{abstract}
Entropy stable discontinuous Galerkin (DG) methods improve the robustness of high order DG simulations of nonlinear conservation laws. These methods yield a semi-discrete entropy inequality, and rely on an algebraic flux differencing formulation which involves both summation-by-parts (SBP) discretization matrices and entropy conservative two-point finite volume fluxes. However, explicit expressions for such two-point finite volume fluxes may not be available for all systems, or may be computationally expensive to compute. 

This paper proposes an alternative approach to constructing entropy stable DG methods using an \gnote{entropy correction artificial viscosity, where the artificial viscosity coefficient is determined} based on the local violation of a cell entropy inequality and the local entropy dissipation. The resulting method \bnote{is a modification of the entropy correction introduced by Abgrall, \"{O}ffner, and Ranocha in \cite{abgrall2022reinterpretation}, and} recovers the same global semi-discrete entropy inequality that is satisfied by entropy stable flux differencing DG methods. The \gnote{entropy correction artificial viscosity} coefficients are parameter-free and locally computable over each cell, and the resulting artificial viscosity preserves both high order accuracy and a hyperbolic maximum stable time-step size under explicit time-stepping. 
\end{abstract}
\end{frontmatter}


\section{Introduction}

Entropy stable discontinuous Galerkin (DG) methods significantly improve the robustness of high order DG methods for the time-dependent nonlinear conservation laws which govern high speed and compressible fluid flows \cite{chen2020review, gassner2021novel}. These methods yield a semi-discrete entropy inequality, and rely on an algebraic ``flux differencing'' formulation which involves summation-by-parts (SBP) operators and entropy conservative two-point finite volume fluxes \cite{tadmor1987numerical}. However, flux differencing entropy stable DG methods face challenges related to their reliance on entropy conservative flux formulas. For example, explicit expressions for such two-point finite volume fluxes may not be available for all systems, or may be computationally expensive to compute \cite{ranocha2023efficient, montoya2024efficient}. Moreover, the use of such fluxes can lead to local linear stability issues \cite{gassner2022stability, ranocha2021preventing}. 

In this paper, we propose a construction of entropy stable methods using \bnote{an ``entropy correction''} artificial viscosity. This approach is motivated by \gnote{local correction terms introduced in \cite{abgrall2018general, abgrall2022reinterpretation},} as well as by observations in \cite{lin2023high} where the authors enforced a cell entropy inequality on nodal DG methods using subcell algebraic flux correction techniques. This cell entropy inequality is \gnote{related} to the cell entropy inequality satisfied by flux differencing entropy stable DG methods, and was observed to preserve high order accuracy and deliver results which were qualitatively similar to flux differencing entropy stable schemes. 

The idea of enforcing entropy stability using a vanishing viscosity is among the oldest and most popular ideas in computational fluid dynamics (CFD) \cite{vonneumann1950method, majda1979numerical, tadmor1987numerical, guermond2011entropy, lv2016entropy, glaubitz2018artificial, berthon2023artificial}, making a comprehensive literature review difficult. However, what distinguishes the artificial viscosity in this paper from most traditional artificial viscosity approaches in that it is intended only to restore an entropy inequality, and not to perform shock capturing (e.g., suppressing spurious oscillations in the solution due to sharp gradients or under-resolved solution features). For example, for a linear constant coefficient PDE where a standard DG formulation is already entropy/energy stable, the artificial viscosity in this paper vanishes. 

The proposed artificial is \bnote{most} closely related to the ``entropy correction'' terms proposed in \cite{abgrall2018general, abgrall2022reinterpretation} and extended in \cite{edoh2024conservative, gaburro2023high, mantri2024fully} and other papers. While these local correction terms are simpler to implement and also restore a cell entropy inequality, the \bnote{entropy correction} artificial viscosity proposed in this paper using consistently discretized diffusion operators appear to be more robust, especially for higher orders of approximation. Moreover, we prove that the entropy violation estimate used in this work is smaller than the estimates of entropy violation used in \cite{abgrall2022reinterpretation, gaburro2023high, mantri2024fully}, resulting in less artificial viscosity being applied. 


The paper proceeds as follows: Section~\ref{sec:nonlinear_cons_laws} reviews nonlinear conservation laws and different forms of an entropy inequality, and Section~\ref{sec:dg} reviews a standard DG weak formulation for a nonlinear conservation law. Section~\ref{sec:av} describes the viscous discretization and proposes an \gnote{entropy correction artificial viscosity} which enforces a cell entropy inequality. \bnote{We compare the proposed method against local entropy correction terms in the literature \cite{abgrall2018general, abgrall2022reinterpretation} (as well as against some existing artificial viscosity methods) in Section~\ref{sec:entropy_correction}.} We conclude with numerical experiments analyzing the robustness, high order accuracy, and \gnote{local} linear stability of \gnote{entropy correction artificial viscosity} in Section~\ref{sec:numerical}. Finally, additional numerical experiments and the derivation of a subcell version of the \gnote{entropy correction artificial viscosity} are included in \ref{sec:additional_1d}, \ref{sec:subcell}.

\section{Nonlinear conservation laws and entropy inequalities}
\label{sec:nonlinear_cons_laws}
The focus of this paper is on the numerical approximation of solutions to systems of nonlinear conservation laws in $d$ dimensions
\begin{equation}
\pd{\bm{u}}{t} + \sum_{m=1}^d \pd{\bm{f}_m(\bm{u})}{x_m} = 0.
\label{eq:ncl}
\end{equation}
where $\bm{u}(\bm{x}, t) \in \mathbb{R}^n$ and $\bm{f}_m : \mathbb{R}^n \rightarrow \mathbb{R}^n$. We assume that \eqref{eq:ncl} admits one or more entropy inequalities of the form 
\begin{equation}
\pd{S(\bm{u})}{t} + \sum_{m=1}^d \pd{F_m(\bm{u})}{x_m} \leq 0, \qquad F_m(\bm{u}) = \bm{v}(\bm{u})^T\bm{f}_m(\bm{u}) - \psi_m(\bm{u}),
\label{eq:entropy_ineq}
\end{equation}
where $S(\bm{u})$ is a scalar convex entropy, $F_m(\bm{u})$ are the associated entropy fluxes, $\psi_m$ are the entropy potentials, and $\bm{v}(\bm{u})$ are the entropy variables 
\[
\bm{v}(\bm{u}) = \pd{S}{\bm{u}}.
\]
Note that the convexity of $S(\bm{u})$ guarantees that the mapping between conservative and entropy variables is invertible.  

For sufficiently regular solutions, one can derive the equality version of \eqref{eq:entropy_ineq} by multiplying \eqref{eq:ncl} by the entropy variables $\bm{v}(\bm{u})$. Applying the chain rule and properties of the entropy fluxes yields the corresponding entropy conservation condition. A vanishing viscosity argument yields the inequality version of \eqref{eq:entropy_ineq} for more general classes of solutions \cite{dafermos2005hyperbolic, godlewski2013numerical, chen2017entropy}. This work utilizes an integrated cell version of \eqref{eq:entropy_ineq} \cite{jiang1994cell}. Consider a closed domain $D \subset R^d$ with boundary $\partial D$. Then, integrating \eqref{eq:entropy_ineq} over $D$ and applying the divergence theorem yields
\begin{equation}
\int_{D}\pd{S(\bm{u})}{t} + \int_{\partial D} \sum_{m=1}^d\LRp{\bm{v}^T\bm{f}_m(\bm{u}) - \psi_m(\bm{u})} n_m\leq 0.
\label{eq:cell_entropy_ineq}
\end{equation}

\subsection{An intermediate entropy identity}

We will enforce the cell entropy inequality \eqref{eq:cell_entropy_ineq} by enforcing an intermediate identity. The derivation of the cell entropy inequality \ref{eq:cell_entropy_ineq} starts by testing with the entropy variables $\bm{v}(\bm{u})$ and integrating over some domain $D$. Doing so and integrating the spatial derivative term by parts yields
\begin{equation}
\int_{D} \pd{S(\bm{u})}{t} + \sum_{m=1}^d \LRs{\int_{D} -\pd{\bm{v}(\bm{u})}{x_m} \bm{f}_m(\bm{u}) + \int_{\partial D} \bm{v}(\bm{u})^T\bm{f}_m(\bm{u}) n_m} = 0.
\label{eq:entropy_identity_step}
\end{equation}
Subtracting \eqref{eq:cell_entropy_ineq} from \eqref{eq:entropy_identity_step} then yields
\begin{equation}
\rnote{\sum_{m=1}^d \LRp{\int_{D}-\pd{\bm{v}(\bm{u})}{x_m}^T\bm{f}_m(\bm{u}) + \int_{\partial D} \psi_m(\bm{u}) n_m \geq 0}}.
\label{eq:cell_entropy_identity}
\end{equation}
We will add artificial viscosity such that we satisfy a discrete version of \vnote{identity \eqref{eq:cell_entropy_identity}}.

\vnote{
We note that if $\bm{u}$ and $\bm{f}_m(\bm{u})$ are differentiable, \eqref{eq:cell_entropy_identity} should be an \textit{equality} 
\begin{equation}
\rnote{\sum_{m=1}^d\LRp{\int_{D}-\pd{\bm{v}(\bm{u})}{x_m}^T\bm{f}_m(\bm{u}) + \int_{\partial D} \psi_m(\bm{u}) n_m} = 0.}
\label{eq:cell_entropy_equality}
\end{equation}
This can be derived from properties of the entropy variables, flux, and entropy flux. For $\bm{u}$ and $\bm{f}_m(\bm{u})$ differentiable, we have the following:
\begin{equation}
\sum_{m=1}^d \bm{v}(\bm{u})^T \pd{\bm{f}_m(\bm{u})}{x_m} = \sum_{m=1}^d\pd{S(\bm{u})}{\bm{u}}^T \pd{\bm{f}_m}{\bm{u}} \pd{\bm{u}}{x_m} = \pd{F_m}{\bm{u}} \pd{\bm{u}}{x_m} = \sum_{m=1}^d \pd{F_m(\bm{u})}{x_m},
\label{eq:entropy_chain_rule}
\end{equation}
where we have used the chain rule and that $ \pd{S(\bm{u})}{\bm{u}}^T \pd{\bm{f}_m}{\bm{u}} = \pd{F_m}{\bm{u}}$ \cite{tadmor2003entropy, godlewski2013numerical}. Integrating the left hand side of \eqref{eq:entropy_chain_rule} by parts over some domain $D$ yields
\[
\int_{D} \sum_{m=1}^d\bm{v}(\bm{u})^T \pd{\bm{f}_m(\bm{u})}{x_m} = \sum_{m=1}^d \LRs{\int_{\partial D}\bm{v}(\bm{u})^T\bm{f}_m(\bm{u})n_m - \int_D \pd{\bm{v}(\bm{u})}{x_m}^T\bm{f}_m(\bm{u})}. 
\]
Integrating the right hand side of \eqref{eq:entropy_chain_rule} and using the definition of $F_m(\bm{u})$ in \eqref{eq:entropy_ineq} yields that
\[
\int_{D} \sum_{m=1}^d \pd{F_m(\bm{u})}{x_m} = \sum_{m=1}^d \int_{\partial D} \LRp{\bm{v}(\bm{u})^T\bm{f}_m(\bm{u}) - \psi_m(\bm{u})}n_m.
\]
Equating these two integrals yields \eqref{eq:cell_entropy_equality}.
}

\section{High order DG formulation}
\label{sec:dg}

We assume that the domain $\Omega \subset \mathbb{R}^d$ is triangulated by non-overlapping simplicial elements $D^k$, where each element $D^k$ is the image of a reference simplex $\hat{D}$ under some affine mapping $\bm{\phi}^k: \widehat{D} \rightarrow D^k$. Let $\bm{n}$ denote the outward normal vector $\bm{n} = [n_1, \ldots, n_d]$ on each face of $D^k$. Finally, let $(u,v)_{D^k}, \LRa{u,v}_{\partial D^k}$ denote the $L^2$ inner products on $D^k$ and the surface $\partial D^k$
\[
(u,v)_{D^k} \approx \int_{D^k} u(\bm{x})v(\bm{x}) \diff{x}, \qquad \LRa{u,v}_{\partial D^k} \approx \int_{\partial D^k} u(\bm{x})v(\bm{x}) \diff{x},
\]
where the approximate equality is due to the assumption that both volume and surface integrals in inner products are computed inexactly (e.g., approximated using quadrature). 

We consider a standard DG formulation for an approximate solution $\bm{u}_h$ on a single element $D^k$ for \eqref{eq:ncl} \cite{karniadakis2005spectral, hesthaven2007nodal}:
\begin{equation}
\LRp{\pd{\bm{u}_h}{t}, \bm{w}}_{D^k} + \sum_{m=1}^d \LRp{-\bm{f}_m(\bm{u}_h), \pd{\bm{w}}{x_m}}_{D^k} + \LRa{\bm{f}_n^*, \bm{w}}_{\partial D^k} = \bm{0}, \qquad \bm{w} \in \LRs{P^N(D^k)}^n,
\label{eq:dg_form}
\end{equation}
where $P^N(D^k)$ denotes the space of total degree $N$ polynomials on $D^k$. 
This formulation is derived by multiplying \eqref{eq:ncl} by a test function $\bm{w} \in \LRs{P^N(D^k)}^n$, integrating by parts, and introducing a numerical flux $\bm{f}^*_n$ across each inter-element interface. All volume and surface integrals are approximated using some quadrature rule, and summing up over all elements $D^k$ yields a global DG formulation. 

\subsection{Semi-discrete entropy estimate}

First, observe that the entropy variables $\bm{v}(\bm{u}_h)$ can be non-polynomial and do not (in general) lie in the test space $\LRs{P^N(D^k)}^n$. Thus, to derive a semi-discrete entropy estimate for \eqref{eq:dg_form}, we must instead test with the $L^2$ projection of the entropy variables $\Pi_N \bm{v}(\bm{u}_h)$ \cite{chan2018discretely}. Here, $\Pi_N$ denotes the $L^2$ projection operator onto $P^N(D^k)$ such that for $f\in L^2(D^k)$
\[
\LRp{\Pi_N f, v}_{D^k} = \LRp{f, v}_{D^k}, \qquad \forall v \in P^N(D^k).
\]
We note that similar observations on the non-polynomial nature of the entropy variables have been made by a variety of different groups in the literature \cite{williams2019analysis, gkanis2021new, colombo2022entropy, chan2022entropyprojection, andrews2024high}.

Since $\pd{\bm{u}_h}{t} \in \LRs{P^N\LRp{D^k}}^n$ for method of lines discretizations, testing the time derivative term in \eqref{eq:dg_form} with the projected entropy variables yields
\begin{equation}
\LRp{\pd{\bm{u}_h}{t}, \Pi_N\bm{v}(\bm{u}_h)}_{D^k} = \LRp{\pd{\bm{u}_h}{t}, \bm{v}(\bm{u}_h)}_{D^k} = \LRp{\pd{S(\bm{u}_h)}{t}, 1}_{D^k},
\label{eq:dSdt}
\end{equation}
where we have used the chain rule in time for the final step. Note that this equality still holds under inexact quadrature, so long as the $L^2$ projection operator $\Pi_N$ is defined using the same inexact quadrature rule. We also note that these arguments are equivalent to the matrix-based arguments made in \cite{chan2018discretely}. 

Using \eqref{eq:dSdt}, we recover that the semi-discrete local rate of change of entropy over $D^k$ is given by
\begin{equation}
\LRp{\pd{S(\bm{u}_h)}{t}, 1}_{D^k} + \sum_{m=1}^d \LRp{-\bm{f}_m(\bm{u}_h), \pd{\Pi_N\bm{v}(\bm{u}_h)}{x_m}}_{D^k} + \LRa{\bm{f}_n^*, \Pi_N\bm{v}(\bm{u}_h)}_{\partial D^k} = 0.
\label{eq:local_dSdt}
\end{equation}
Unfortunately, \eqref{eq:local_dSdt} does not in general yield a semi-discrete entropy equality or inequality. At this point in the continuous derivation of either an entropy equality or inequality, the chain rule is typically used to manipulate the volume integral. However, due to the presence of the $L^2$ projection of the entropy variables $\Pi_N\bm{v}(\bm{u})$, we are unable to mimic this step.  Moreover, because the integrals and inner products in this statement are typically discretized using inexact quadrature, the chain rule does not necessarily hold in spatially discrete settings. 

High order entropy stable DG methods based on ``flux differencing'' \cite{carpenter2014entropy, gassner2016split, chen2017entropy, crean2018entropy, chan2018discretely} circumvent the loss of the chain rule by modifying the volume term in the DG formulation to incorporate non-dissipative numerical fluxes which satisfy an entropy conservation condition  \cite{tadmor1987numerical}. When combined with summation-by-parts (SBP) discretizations, this yields DG formulations which are semi-discretely entropy conservative or entropy stable in the sense that they satisfy a discrete version of the cell entropy identity \eqref{eq:cell_entropy_identity}, and thus a quadrature version of the cell entropy inequality \eqref{eq:cell_entropy_ineq}
\[
\LRp{\pd{S(\bm{u}_h)}{t}, 1}_{D^k} + \LRa{\LRp{ \Pi_N\bm{v}(\bm{u}_h)}^T\bm{f}_n^* - \sum_{m=1}^d\psi_m(\tilde{\bm{u}}) n_m, 1}_{\partial D^k} = 0.
\]
where we have introduced the entropy projection $\tilde{\bm{u}}$:
\begin{equation}
\tilde{\bm{u}} = \bm{u}\LRp{\Pi_N\bm{v}(\bm{u}_h)}, 
\label{eq:entropy_proj}
\end{equation}
e.g., the conservative variables evaluated in terms of the $L^2$ projection of the entropy variables.

Recent work has demonstrated that it is possible to enforce a cell entropy inequality outside of the ``flux differencing'' framework \cite{lin2023high, vilar2024local}. In \cite{lin2023high}, a cell entropy inequality is enforced by solving a local knapsack problem to compute the optimal subcell blending of a standard high order nodal DG method with a compatible entropy stable low order scheme. In this paper, we add a minimal \gnote{entropy correction artificial viscosity} to a standard high order DG method which is sufficient to recover an entropy inequality. The artificial viscosity coefficient is locally computable and relies on standard stability estimates for DG discretizations of viscous terms \cite{gassner2018br1, chan2022entropy}.

\section{Recovering an entropy inequality via \bnote{an entropy correction} artificial viscosity}
\label{sec:av}
We consider the following ``monolithic'' viscous regularization of \eqref{eq:ncl}
\[
\pd{\bm{u}}{t} + \sum_{m=1}^d \pd{\bm{f}_m(\bm{u})}{x_m} = \sum_{i,j=1}^d \pd{}{x_i}\LRp{\epsilon_k(\bm{u})\pd{\bm{u}}{x_j}}.
\]
where $\epsilon_k(\bm{u}) \geq 0$. \vnote{Here, the term ``monolithic'' is taken from \cite{guermond2014viscous}.}
It is well known that it is possible to symmetrize many different viscous term by transforming to entropy variables \cite{harten1983symmetric, hughes1986new}. For the monolithic regularization, this can simply be done by applying the chain rule:
\begin{equation}
\pd{\bm{u}}{t} + \sum_{m=1}^d \pd{\bm{f}_m(\bm{u})}{x_m} = \sum_{i=1}^d \pd{}{x_i}\LRp{\epsilon_k(\bm{u})\pd{\bm{u}}{\bm{v}}\pd{\bm{v}}{x_i}}.
\label{eq:simple_av}
\end{equation}
By the convexity of $S(\bm{u})$, the Jacobian matrix $\pd{\bm{u}}{\bm{v}}$ is symmetric and positive definite. We will analyze the slightly more general viscous regularization
\begin{equation}
\pd{\bm{u}}{t} + \sum_{m=1}^d \pd{\bm{f}_m(\bm{u})}{x_m} = \sum_{i,j=1}^d \pd{}{x_i}\LRp{\epsilon_k(\bm{u})\bm{K}_{ij}\pd{\bm{v}}{x_j}}.
\label{eq:av}
\end{equation}
where $\bm{K}_{ij}$ denote blocks of a symmetric and positive semi-definite matrix $\bm{K}$
\begin{equation}
\bm{K} = \begin{bmatrix}
\bm{K}_{11} & \ldots & \bm{K}_{1d}\\
\vdots & \ddots & \vdots\\
\bm{K}_{d1} & \ldots & \bm{K}_{dd}
\end{bmatrix} = \bm{K}^T, \qquad \bm{K} \succeq 0.
\label{eq:K}
\end{equation}
For example, taking $\bm{K}_{ij} = \delta_{ij} \pd{\bm{u}}{\bm{v}}$ recovers \eqref{eq:simple_av}, \vnote{and we will utilize $\bm{K}_{ij} = \delta_{ij} \pd{\bm{u}}{\bm{v}}$ for the remainder of this paper.} Other viscous regularizations of \eqref{eq:ncl}  \cite{hughes1986new, guermond2014viscous, svard2024refining} can also be accommodated in this framework, but will not be the focus of this work. 

\subsection{DG formulation of artificial viscosity}

We discretize \eqref{eq:av} by adding a viscous contribution $\bm{g}_{\rm visc}$ to the DG formulation \eqref{eq:dg_form}:
\begin{equation}
\LRp{\pd{\bm{u}_h}{t}, \bm{w}}_{D^k} + \sum_{m=1}^d \LRp{-\bm{f}_m(\bm{u}_h), \pd{\bm{w}}{x_m}}_{D^k} + \LRa{\bm{f}_n^*, \bm{w}}_{\partial D^k} = \LRp{\bm{g}_{\rm visc}, \bm{w}}_{D^k}, \quad \bm{w} \in \LRs{P^N(D^k)}^n,
\label{eq:visc_dg}
\end{equation}
where the viscous terms $\bm{g}_{\rm visc}$ are discretizations of \eqref{eq:av} using a BR-1 type discretization \cite{chan2022entropy}. To simplify the notation, denote the $L^2$ projection of the entropy variables $\bm{v}_h = \Pi_N\bm{v}(\bm{u}_h)$. Then, $\bm{g}_{\rm visc}$ is given by the following formulation (where $i = 1,\ldots,d$): 
\begin{gather}
\LRp{\bm{\Theta}_i, \bm{w}_{1,i}}_{D^k} = \LRp{ \pd{\bm{v}_h}{x_i},\bm{w}_{1,i}}_{D^k} + \frac{1}{2}\LRa{\jump{\bm{v}_h}n_{i},\bm{w}_{1,i}}_{\partial D^k}, \quad \forall \bm{w}_{1,i}\in \LRs{P^N\LRp{D^k}}^n \label{eq:ldg1},\\
\LRp{\bm{\sigma}_i,\bm{w}_{2,i}}_{D^k} = \LRp{\sum_{j=1}^d\epsilon_k(\bm{u}_h)\bm{K}_{ij}\bm{\Theta}_j, \bm{w}_{2,i}}_{D^k}, \quad \forall \bm{w}_{2,i}\in \LRs{P^N\LRp{D^k}}^n  \label{eq:ldg2},\\
\LRp{\bm{g}_{\rm visc}, \bm{w}_{3}}_{D^k} 
= \sum_{i=1}^d \LRs{\LRp{ -\bm{\sigma}_i, \pd{\bm{w}_{3}}{x_i}}_{D^k} + \LRa{\avg{\bm{\sigma}_i}n_{i},\bm{w}_{3}}_{\partial D^k}},\quad \forall \bm{w}_{3}\in \LRs{P^N\LRp{D^k}}^n.   \label{eq:ldg3}
\end{gather}
Here, $\jump{\cdot}$ and $\avg{\cdot}$ denote the jump and average operations:
\[
\jump{u} = u^+ - u^-, \qquad  \avg{u} = \frac{1}{2}\LRp{u^+ + u^-},
\]
where $u^-$ denotes the interior value of $u$ on a face of $D^k$, and $u^+$ denotes the exterior (neighboring) value of $u$ across the same face. Here, $\bm{\Theta}_i$ are DG approximations of derivatives of the entropy variables $\bm{v}(\bm{u}_h)$ with respect to the $i$th coordinate. In \eqref{eq:ldg2}, we compute the viscous fluxes $\bm{\sigma}_i$ as the $L^2$ projection of $\sum_{j=1}^d \bm{K}_{ij}\bm{\Theta}_j$ for $i = 1,\ldots, d$ onto the approximation space of each element. The viscous terms  $\bm{g}_{\rm visc}$ are the result of computing the divergence of the viscous fluxes in \eqref{eq:ldg3}. 

It can be shown using a straightforward modification of the proofs in \cite{chan2022entropy} that the following stability estimate holds:
\begin{lemma}
\label{lemma:gvisc} (Adapted from Lemma 3.1 in \cite{chan2022entropy}).
Let $\bm{g}_{\rm visc}$ be given by \eqref{eq:ldg1}, \eqref{eq:ldg2}, \eqref{eq:ldg3}. Then, for a periodic domain, 
\[
\sum_k -\LRp{\bm{g}_{\rm visc}, \Pi_N\bm{v}(\bm{u}_h)}_{D^k} = \sum_k \sum_{i,j=1}^d \LRp{\epsilon_k(\bm{u}_h)\bm{K}_{ij}\bm{\Theta}_j, \bm{\Theta}_i}_{D^k} \geq 0.
\]
where $\Pi_N\bm{v}(\bm{u}_h)$ denotes the $L^2$ projection of the entropy variables.
\end{lemma}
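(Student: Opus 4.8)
The plan is to test the divergence equation \eqref{eq:ldg3} with the projected entropy variables and then unwind the two auxiliary equations \eqref{eq:ldg1}--\eqref{eq:ldg2} to expose the symmetric quadratic form, deferring all interface contributions to a telescoping argument at the very end. The key enabling observation is that $\bm{v}_h = \Pi_N\bm{v}(\bm{u}_h) \in \LRs{P^N(D^k)}^n$ is an admissible test function in \eqref{eq:ldg3}, and that $\bm{\sigma}_i, \bm{\Theta}_i \in \LRs{P^N(D^k)}^n$ by construction, so they are admissible test functions in \eqref{eq:ldg1} and \eqref{eq:ldg2} respectively. This lets me chain all three equations together on each element.

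Concretely, I would first set $\bm{w}_3 = \bm{v}_h$ in \eqref{eq:ldg3} to obtain
\[
-\LRp{\bm{g}_{\rm visc}, \bm{v}_h}_{D^k} = \sum_{i=1}^d \LRs{\LRp{\bm{\sigma}_i, \pd{\bm{v}_h}{x_i}}_{D^k} - \LRa{\avg{\bm{\sigma}_i} n_i, \bm{v}_h}_{\partial D^k}}.
\]
Next I would set $\bm{w}_{1,i} = \bm{\sigma}_i$ in \eqref{eq:ldg1} to rewrite the volume term as $\LRp{\bm{\sigma}_i, \pd{\bm{v}_h}{x_i}}_{D^k} = \LRp{\bm{\Theta}_i, \bm{\sigma}_i}_{D^k} - \tfrac12\LRa{\jump{\bm{v}_h} n_i, \bm{\sigma}_i}_{\partial D^k}$, and finally set $\bm{w}_{2,i} = \bm{\Theta}_i$ in \eqref{eq:ldg2} to identify $\LRp{\bm{\Theta}_i, \bm{\sigma}_i}_{D^k} = \sum_{j=1}^d \LRp{\epsilon_k(\bm{u}_h)\bm{K}_{ij}\bm{\Theta}_j, \bm{\Theta}_i}_{D^k}$, using symmetry of the $L^2$ inner product. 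Collecting these gives, on each element,
\[
-\LRp{\bm{g}_{\rm visc}, \bm{v}_h}_{D^k} = \sum_{i,j=1}^d \LRp{\epsilon_k(\bm{u}_h)\bm{K}_{ij}\bm{\Theta}_j, \bm{\Theta}_i}_{D^k} - \sum_{i=1}^d \LRs{\tfrac12\LRa{\jump{\bm{v}_h} n_i, \bm{\sigma}_i}_{\partial D^k} + \LRa{\avg{\bm{\sigma}_i} n_i, \bm{v}_h}_{\partial D^k}}.
\]

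The main obstacle is showing that the surface terms vanish after summing over all elements, and I expect this to be the heart of the argument. I would proceed face-by-face: on each interior face shared by $D^k$ and its neighbor, I collect the two contributions, using that the outward normal flips sign across the face while $\avg{\cdot}$ is symmetric and $\jump{\cdot}$ changes sign. Writing the interior/exterior traces $a = \bm{v}_h^-$, $b = \bm{v}_h^+$ and abbreviating $S = \bm{\sigma}_i^- + \bm{\sigma}_i^+$, the combined integrand on the face collapses to $\tfrac12 (b-a)^T S + \tfrac12 S^T(a-b)$, which is identically zero since these are scalars and $(b-a)^T S = S^T(b-a)$. The specific central-flux and half-jump structure of the BR-1 discretization \eqref{eq:ldg1}--\eqref{eq:ldg3} is precisely what makes this cancellation occur. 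On a periodic domain there are no physical boundary faces, so summing over $k$ annihilates every surface term and leaves $\sum_k -\LRp{\bm{g}_{\rm visc}, \bm{v}_h}_{D^k} = \sum_k \sum_{i,j=1}^d \LRp{\epsilon_k(\bm{u}_h)\bm{K}_{ij}\bm{\Theta}_j, \bm{\Theta}_i}_{D^k}$.

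Finally I would conclude non-negativity by stacking $\bm{\Theta} = \LRs{\bm{\Theta}_1; \ldots; \bm{\Theta}_d}$, so that the block structure \eqref{eq:K} gives $\sum_{i,j} \bm{\Theta}_i^T \bm{K}_{ij}\bm{\Theta}_j = \bm{\Theta}^T\bm{K}\bm{\Theta} \geq 0$ pointwise because $\bm{K} \succeq 0$; multiplying by $\epsilon_k(\bm{u}_h) \geq 0$ and integrating with a positive-weight quadrature rule preserves the sign, so each summand is non-negative. The only care needed here is that the inexact quadrature defining the $L^2$ inner products uses non-negative weights, which I would note as a standing assumption consistent with the rest of the discretization.
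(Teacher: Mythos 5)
Your proof is correct and is essentially the argument the paper intends: the paper omits the proof entirely, deferring to Lemma 3.1 of \cite{chan2022entropy}, and your chaining of \eqref{eq:ldg3}, \eqref{eq:ldg1}, \eqref{eq:ldg2} with the test functions $\bm{v}_h$, $\bm{\sigma}_i$, $\bm{\Theta}_i$ followed by the face-by-face telescoping of the central-flux surface terms is exactly the standard BR-1 stability argument being cited. Your closing caveats (non-negative quadrature weights, and implicitly that neighboring elements share the same face quadrature so the two traces pair up exactly) are the right ones to flag.
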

Lemma~\ref{lemma:gvisc} also holds under other appropriate impositions of boundary conditions, \rnote{for example, the entropy stable impositions of no-slip wall boundary conditions in \cite{sayyari2021development} or far-field boundary conditions from \cite{svard2024entropy}. These will be treated in future work}. We also note that Lemma~\ref{lemma:gvisc} is agnostic to the regularity of $\epsilon_k(\bm{u}_h)$ and $\bm{K}_{ij}$; in particular, they can be discontinuous from element to element. Because the estimate in \note{Lemma}~\ref{lemma:gvisc} is localizable to each element, it is possible to determine exactly the amount of artificial viscosity necessary to satisfy a discrete version of the cell entropy identity \eqref{eq:cell_entropy_identity}
\begin{equation}
\sum_{m=1}^d \LRs{\LRp{-\bm{f}_m(\bm{u}_h), \pd{\Pi_N\bm{v}(\bm{u}_h)}{x_m}}_{D^k} + \LRa{\psi_m(\tilde{\bm{u}}) n_m, 1}_{\partial D^k}} \geq 0.
\label{eq:discrete_cell_entropy_identity}
\end{equation}
where $\tilde{\bm{u}}$ is the entropy projection \eqref{eq:entropy_proj}.

We first introduce the volume entropy residual $\delta_k(\bm{u}_h)$:
\begin{equation}
\delta_k(\bm{u}_h) = \sum_{m=1}^d \LRs{ \LRp{-\bm{f}_m(\bm{u}_h), \pd{\Pi_N\bm{v}(\bm{u}_h)}{x_m}}_{D^k} + \LRa{\psi_m(\tilde{\bm{u}})n_m, 1}_{\partial D^k}}.
\label{eq:entropy_ineq_error}
\end{equation}
Our goal will be to determine an artificial viscosity proportional to the violation of \eqref{eq:discrete_cell_entropy_identity}:
\begin{lemma}
\label{thm:cell_entropy_ineq}
Let $\epsilon_k(\bm{u}_h)$ on $D^k$ satisfy
\begin{equation}
\sum_{i,j=1}^d \LRp{\epsilon_k(\bm{u}_h)\bm{K}_{ij}\bm{\Theta}_j, \bm{\Theta}_i}_{D^k} \geq -\min(0, \delta_k(\bm{u}_h)),
\label{eq:cell_entropy_ineq_condition}
\end{equation}
and let $\tilde{\bm{u}}$ denote the entropy projection \eqref{eq:entropy_proj}. Then, \eqref{eq:visc_dg} satisfies the following global entropy inequality:
\begin{equation}
\sum_k \LRs{\LRp{\pd{S(\bm{u}_h)}{t}, 1}_{D^k} + \LRa{\LRp{ \Pi_N\bm{v}(\bm{u}_h)}^T\bm{f}_n^* - \sum_{m=1}^d\psi_m(\tilde{\bm{u}}) n_m, 1}_{\partial D^k}} \leq 0.
\label{eq:global_entropy_ineq}
\end{equation}
\end{lemma}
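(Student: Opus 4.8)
The plan is to test the viscous DG formulation \eqref{eq:visc_dg} with the projected entropy variables $\bm{v}_h = \Pi_N\bm{v}(\bm{u}_h)$, identify the resulting volume term as the entropy residual $\delta_k$ (up to a boundary potential), and then control the viscous contribution using Lemma~\ref{lemma:gvisc}. First I would set $\bm{w} = \bm{v}_h$ in \eqref{eq:visc_dg}. By \eqref{eq:dSdt} the time-derivative term collapses to $\LRp{\pd{S(\bm{u}_h)}{t},1}_{D^k}$, so on each element
\[
\LRp{\pd{S(\bm{u}_h)}{t},1}_{D^k} + \sum_{m=1}^d \LRp{-\bm{f}_m(\bm{u}_h), \pd{\bm{v}_h}{x_m}}_{D^k} + \LRa{\bm{f}_n^*,\bm{v}_h}_{\partial D^k} = \LRp{\bm{g}_{\rm visc}, \bm{v}_h}_{D^k}.
\]

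Next I would recognize the volume flux term as $\delta_k(\bm{u}_h)$ minus the surface potential contribution appearing in its definition \eqref{eq:entropy_ineq_error}. Substituting $\sum_{m=1}^d \LRp{-\bm{f}_m(\bm{u}_h),\pd{\bm{v}_h}{x_m}}_{D^k} = \delta_k(\bm{u}_h) - \sum_{m=1}^d\LRa{\psi_m(\tilde{\bm{u}})n_m,1}_{\partial D^k}$ and rearranging gives
\[
\LRp{\pd{S(\bm{u}_h)}{t},1}_{D^k} + \LRa{\bm{v}_h^T\bm{f}_n^* - \sum_{m=1}^d\psi_m(\tilde{\bm{u}})n_m, 1}_{\partial D^k} = \LRp{\bm{g}_{\rm visc},\bm{v}_h}_{D^k} - \delta_k(\bm{u}_h).
\]
I would then sum over all elements and invoke Lemma~\ref{lemma:gvisc}, which converts $\sum_k \LRp{\bm{g}_{\rm visc},\bm{v}_h}_{D^k}$ into $-\sum_k\sum_{i,j=1}^d\LRp{\epsilon_k(\bm{u}_h) \bm{K}_{ij}\bm{\Theta}_j,\bm{\Theta}_i}_{D^k}$. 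The left-hand side is exactly the quantity in \eqref{eq:global_entropy_ineq}, so it suffices to show its right-hand side is nonpositive, i.e., that $\sum_{i,j=1}^d\LRp{\epsilon_k(\bm{u}_h)\bm{K}_{ij}\bm{\Theta}_j,\bm{\Theta}_i}_{D^k} + \delta_k(\bm{u}_h)\geq 0$ on each element. This follows immediately from the hypothesis \eqref{eq:cell_entropy_ineq_condition} combined with the elementary identity $\delta_k(\bm{u}_h) - \min(0,\delta_k(\bm{u}_h)) = \max(0,\delta_k(\bm{u}_h))\geq 0$.

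The argument is essentially bookkeeping, so I do not anticipate a deep computational obstacle; the delicate points are conceptual. The crucial step is that one must test with the \emph{projected} entropy variables $\Pi_N\bm{v}(\bm{u}_h)$ rather than $\bm{v}(\bm{u}_h)$ in order for \eqref{eq:dSdt} to hold under inexact quadrature, and the surface potential in $\delta_k$ must be evaluated at the entropy projection $\tilde{\bm{u}}$ so that the boundary term produced by testing matches the potential term retained in \eqref{eq:global_entropy_ineq}. The only genuinely global ingredient is Lemma~\ref{lemma:gvisc}: its nonnegativity (on a periodic domain) is precisely what allows a purely elementwise viscosity bound to deliver a \emph{global} entropy inequality. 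Notably, the convective interface fluxes $\bm{v}_h^T\bm{f}_n^*$ and the potential terms need not telescope across elements, since they are carried through unchanged to the left-hand side of \eqref{eq:global_entropy_ineq}.
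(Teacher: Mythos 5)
Your proposal is correct and follows essentially the same route as the paper's proof: test \eqref{eq:visc_dg} with $\Pi_N\bm{v}(\bm{u}_h)$, rewrite the volume flux term using the definition of $\delta_k(\bm{u}_h)$ (the paper phrases this as adding and subtracting $\LRa{\sum_{m=1}^d\psi_m(\tilde{\bm{u}})n_m,1}_{\partial D^k}$), sum over elements so that Lemma~\ref{lemma:gvisc} applies, and close with the elementary bound $\delta_k(\bm{u}_h) + \sum_{i,j=1}^d\LRp{\epsilon_k(\bm{u}_h)\bm{K}_{ij}\bm{\Theta}_j,\bm{\Theta}_i}_{D^k} \geq \delta_k(\bm{u}_h) - \min(0,\delta_k(\bm{u}_h)) \geq 0$. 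Your closing observations (testing with the projected entropy variables, evaluating the potential at $\tilde{\bm{u}}$, and the global-only validity of Lemma~\ref{lemma:gvisc}) match the paper's reasoning, so there is nothing to add.
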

\begin{proof}
The inclusion of the viscous terms in \eqref{eq:visc_dg} yields a slightly modified version of the semi-discrete entropy balance \eqref{eq:local_dSdt}
\[
\LRp{\pd{S(\bm{u}_h)}{t}, 1}_{D^k} + \sum_{m=1}^d \LRp{-\bm{f}_m(\bm{u}_h), \pd{\Pi_N\bm{v}(\bm{u}_h)}{x_m}}_{D^k} + \LRa{\bm{f}_n^*, \Pi_N\bm{v}(\bm{u}_h)}_{\partial D^k}  = \LRp{\bm{g}_{\rm visc}, \Pi_N\bm{v}(\bm{u}_h)}_{D^k}.
\]
\bnote{To derive \eqref{eq:global_entropy_ineq}, we add and subtract $\LRa{\sum_{m=1}^d\psi_m(\tilde{\bm{u}}) n_m, 1}_{\partial D^k}$ on each element. Then, summing over all elements $D^k$ and using \eqref{lemma:gvisc} yields 
\begin{align*}
\sum_k\LRp{\pd{S(\bm{u}_h)}{t}, 1}_{D^k} &+ \delta_k(\bm{u}_h) +  \sum_{i,j=1}^d \LRp{\epsilon_k(\bm{u}_h)\bm{K}_{ij}\bm{\Theta}_j, \bm{\Theta}_i}_{D^k} \\
&+ \LRa{\LRp{ \Pi_N\bm{v}(\bm{u}_h)}^T\bm{f}_n^* - \sum_{m=1}^d\psi_m(\tilde{\bm{u}}) n_m, 1}_{\partial D^k} = 0.
\end{align*}
The proof is completed by noting that, by \eqref{eq:cell_entropy_ineq_condition}, 
\[
\delta_k(\bm{u}_h) +  \sum_{i,j=1}^d \LRp{\epsilon_k(\bm{u}_h)\bm{K}_{ij}\bm{\Theta}_j, \bm{\Theta}_i}_{D^k} \geq \min(0, \delta_k(\bm{u}_h)) +  \sum_{i,j=1}^d \LRp{\epsilon_k(\bm{u}_h)\bm{K}_{ij}\bm{\Theta}_j, \bm{\Theta}_i}_{D^k} \geq 0.
\]
}
\end{proof}
\vnote{
We note that, if instead of \eqref{eq:cell_entropy_ineq_condition}, we enforced
\begin{equation}
\sum_{i,j=1}^d \LRp{\epsilon_k(\bm{u}_h)\bm{K}_{ij}\bm{\Theta}_j, \bm{\Theta}_i}_{D^k} = -\delta_k(\bm{u}_h), 
\end{equation}
then the resulting formulation is entropy conservative in the sense that \eqref{eq:global_entropy_ineq} would become an equality instead. However, because this results in negative viscosity coefficients $\epsilon_k(\bm{u}_h)$ and anti-diffusive contributions, we do not consider it in this paper. Moreover, numerical results using this ``entropy conservative'' approach result in more spurious oscillations than if \eqref{eq:cell_entropy_ineq_condition} is used.
}
\begin{remark}
We note that it is possible to include additional semi-definite penalty terms involving the jumps of entropy variables in \eqref{eq:ldg3} \cite{chan2022entropy}. We do not include them here for simplicity of presentation; however, because they add additional entropy dissipation to the sufficient condition \eqref{eq:cell_entropy_ineq_condition}, they can be incorporated without changing the main results of this paper. Moreover, the viscous discretization is not restricted to a BR-1 discretization; any viscous discretization which yields a localizable energy estimate results in the same semi-discrete entropy stability estimate in Lemma~\ref{thm:cell_entropy_ineq}.
\end{remark}

One can also derive a local entropy balance as is done in \cite{shu2009general}, though the cell entropy inequality will contain additional consistent terms resulting from the viscous discretization. Furthermore, if the numerical flux $\bm{f}^*_n$ is entropy stable and evaluated using the entropy projection $\tilde{\bm{u}}$, then we have a global entropy inequality:
\begin{lemma}
\label{lemma:global_entropy_inequality}
Let the interface flux $\bm{f}^*_n = \bm{f}^*_n(\bm{u}_L, \bm{u}_R)$ be skew symmetric and entropy stable such that 
\begin{gather*}
\LRp{\bm{v}_L - \bm{v}_R}^T\bm{f}^*_n(\bm{u}_L, \bm{u}_R) \leq \sum_{m=1}^d \LRp{\psi_m(\bm{u}_L) - \psi_m(\bm{u}_R)}n_m\\
\bm{f}^*_n(\bm{u}_L, \bm{u}_R) = - \bm{f}^*_{-n}(\bm{u}_R, \bm{u}_L).
\end{gather*}
where $\bm{v}_L = \bm{v}(\bm{u}_L), \bm{v}_R = \bm{v}(\bm{u}_R)$ are the entropy variables corresponding to the left and right solution states $\bm{u}_L, \bm{u}_R$. Then, if the interface flux is evaluated in terms of the entropy projection $\bm{f}^*_n = \bm{f}^*_n(\tilde{\bm{u}}^+, \tilde{\bm{u}})$, \eqref{eq:global_entropy_ineq} implies the global entropy inequality
\[
{\LRp{\pd{S(\bm{u}_h)}{t}, 1}_{\Omega} + \LRa{\LRp{ \Pi_N\bm{v}(\bm{u}_h)}^T\bm{f}_n^* - \sum_{m=1}^d\psi_m(\tilde{\bm{u}}) n_m, 1}_{\partial \Omega}} \leq 0.
\]
\end{lemma}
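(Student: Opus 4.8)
The plan is to start from the global inequality \eqref{eq:global_entropy_ineq}, which already holds by Lemma~\ref{thm:cell_entropy_ineq}, and to show that the interior-face contributions to its surface sum are non-negative. Discarding those contributions then preserves the inequality while collapsing the element-wise surface terms into a single boundary term over $\partial\Omega$. The volume time-derivative terms require no work: since the $\LRp{\pd{S(\bm{u}_h)}{t},1}_{D^k}$ are integrals of the same integrand over a partition of $\Omega$, they sum directly to $\LRp{\pd{S(\bm{u}_h)}{t},1}_\Omega$. The effort is entirely in the surface sum $\sum_k \LRa{\LRp{\Pi_N\bm{v}(\bm{u}_h)}^T\bm{f}_n^* - \sum_{m=1}^d \psi_m(\tilde{\bm{u}}) n_m, 1}_{\partial D^k}$.

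First I would record the consistency property of the entropy projection, $\bm{v}(\tilde{\bm{u}}) = \Pi_N\bm{v}(\bm{u}_h) = \bm{v}_h$, which follows from the definition \eqref{eq:entropy_proj} together with the invertibility of the entropy variable map guaranteed by convexity of $S$. This identifies the entropy variables $\Pi_N\bm{v}(\bm{u}_h)$ appearing in the surface integrand with $\bm{v}(\tilde{\bm{u}})$, so that the hypothesized entropy stability estimate—stated in terms of $\bm{v}_L,\bm{v}_R$ associated with states $\bm{u}_L,\bm{u}_R$—can be invoked on each face with the choice $\bm{u}_L = \tilde{\bm{u}}^+$ and $\bm{u}_R = \tilde{\bm{u}}$.

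Next I would split the surface sum into boundary faces on $\partial\Omega$ and interior faces shared by two elements, and analyze a single interior face $f$ shared by $D^k$ and a neighbor. Writing the interior trace as $(\cdot)^-$, the exterior trace as $(\cdot)^+$, and $\bm{n}$ for the outward normal of $D^k$, the face contributes $\LRa{\LRp{\bm{v}_h^-}^T\bm{f}_n^*(\tilde{\bm{u}}^+,\tilde{\bm{u}}^-) - \sum_{m=1}^d\psi_m(\tilde{\bm{u}}^-)n_m, 1}_f$ from $D^k$, and an analogous term from the neighbor in which the roles of $^+$ and $^-$ are exchanged and the normal is $-\bm{n}$. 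Applying the skew-symmetry $\bm{f}^*_n(\bm{u}_L,\bm{u}_R) = -\bm{f}^*_{-n}(\bm{u}_R,\bm{u}_L)$ to the neighbor's flux and adding the two contributions collapses them into the single jump expression $-\LRa{\LRp{\bm{v}_h^+-\bm{v}_h^-}^T\bm{f}_n^*(\tilde{\bm{u}}^+,\tilde{\bm{u}}^-) - \sum_{m=1}^d\LRp{\psi_m(\tilde{\bm{u}}^+)-\psi_m(\tilde{\bm{u}}^-)}n_m, 1}_f$, whose bracketed integrand is non-positive by the entropy stability hypothesis (with $\bm{u}_L=\tilde{\bm{u}}^+$, $\bm{u}_R=\tilde{\bm{u}}^-$), so the face contribution is $\geq 0$.

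Finally I would assemble the estimate. Since each interior face contributes a non-negative amount and \eqref{eq:global_entropy_ineq} bounds the full volume-plus-surface sum by zero, subtracting off the non-negative interior-face total leaves $\LRp{\pd{S(\bm{u}_h)}{t},1}_\Omega + \LRa{\LRp{\Pi_N\bm{v}(\bm{u}_h)}^T\bm{f}_n^* - \sum_{m=1}^d\psi_m(\tilde{\bm{u}})n_m,1}_{\partial\Omega} \leq 0$, which is the claim. I expect the main obstacle to be the bookkeeping of orientation conventions when merging the two adjacent-element contributions: getting the normal sign flip, the swapped flux arguments, and the entropy-potential signs all consistent so that the merged term lines up exactly with the left-hand side of the entropy stability hypothesis. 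The consistency identity $\bm{v}(\tilde{\bm{u}})=\bm{v}_h$ is what makes that matching possible, and it is the step easiest to overlook.
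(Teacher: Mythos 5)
Your proposal is correct and follows essentially the same argument as the paper's proof: split the surface sum in \eqref{eq:global_entropy_ineq} over faces, merge the two contributions on each interior face using skew-symmetry and the normal flip $\bm{n}^+ = -\bm{n}$, and apply the entropy stability hypothesis with $\bm{u}_L = \tilde{\bm{u}}^+$, $\bm{u}_R = \tilde{\bm{u}}$ (valid because $\bm{v}(\tilde{\bm{u}}) = \Pi_N\bm{v}(\bm{u}_h)$) to show each interior-face contribution is non-negative, leaving only the $\partial\Omega$ term. Your explicit emphasis on the consistency identity $\bm{v}(\tilde{\bm{u}}) = \Pi_N\bm{v}(\bm{u}_h)$ is exactly the step the paper invokes when it cites $\tilde{\bm{u}} = \bm{u}\LRp{\Pi_N\bm{v}(\bm{u}_h)}$.
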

\begin{proof}
The proof is identical to the proof of the global entropy inequality in \cite{chen2017entropy, chan2018discretely}. We repeat it for completeness here. Since the sum over the interface terms in \eqref{eq:global_entropy_ineq} includes all elements, each face $f$ between element $D^k$ and its neighbor $D^{k,+}$ yields two contributions:
\begin{align*}
&\int_{f \cap D^k} \LRp{\Pi_N\bm{v}(\bm{u}_h)}\cdot\bm{f}_n^*(\tilde{\bm{u}}^+, \tilde{\bm{u}}) - \sum_{m=1}^d\psi_m(\tilde{\bm{u}}) n_m\\
&+ \int_{f \cap D^{k,+}} \LRp{\Pi_N\bm{v}(\bm{u}_h)}^+\cdot\bm{f}_{n^+}^*(\tilde{\bm{u}}, \tilde{\bm{u}}^+) - \sum_{m=1}^d\psi_m(\tilde{\bm{u}}^+) n^+_m
\end{align*}
The global entropy inequality follows if the sums of these inter-element contributions are non-negative.
Noting that $\bm{n}^+ = -\bm{n}$ and using the skew-symmetry property, we can combine like terms. Then, using the entropy stability of $\bm{f}_n^*$ and that that $\tilde{\bm{u}} = \bm{u}\LRp{\Pi_N\bm{v}(\bm{u}_h)}$, we conclude that
\[
\int_{f}-\LRp{\LRp{\Pi_N\bm{v}(\bm{u}_h)}^+-\Pi_N\bm{v}(\bm{u}_h) }\cdot\bm{f}_n^*(\tilde{\bm{u}}^+, \tilde{\bm{u}})  + \sum_{m=1}^d \LRp{\psi_m(\tilde{\bm{u}}^+) - \psi_m(\tilde{\bm{u}}) } n_m \geq 0.
\]
\end{proof}
Note that, for periodic domains, the boundary contribution in Lemma~\ref{lemma:global_entropy_inequality} will also vanish. 

\begin{remark}
\label{remark:dissipation}
While the global entropy inequality in Lemma~\ref{lemma:global_entropy_inequality} is exactly the same as the one satisfied by flux differencing entropy stable high order DG methods (see Theorem 3.4 of \cite{chen2017entropy} or equation (87) in \cite{chan2018discretely}), we note that flux differencing entropy stable high order DG methods are less entropy dissipative, since they satisfy an equality version of \eqref{eq:global_entropy_ineq}. 
\end{remark}


\subsection{A piecewise constant viscosity coefficient}
\label{sec:piecewise_const}

We now seek to derive an expression for $\epsilon_k(\bm{u}_h)$. The simplest approach for determining $\epsilon_k(\bm{u}_h)$ is to assume it is constant over each element; then, we can determine the smallest value of $\epsilon_k(\bm{u}_h)$ necessary to \bnote{enforce condition \eqref{eq:cell_entropy_ineq_condition} in Lemma~\ref{thm:cell_entropy_ineq}} as follows:
\begin{equation}
\epsilon_k(\bm{u}_h) \geq \frac{-\min(0, \delta_k(\bm{u}_h))}{\sum_{i,j=1}^d \LRp{\bm{K}_{ij}\bm{\Theta}_j, \bm{\Theta}_i}_{D^k}},
\label{eq:eps}
\end{equation}
where $\delta_k(\bm{u}_h)$ is the volume entropy residual given by \eqref{eq:entropy_ineq_error}. 

%

\begin{remark}
\label{remark:ratio}
The denominator in \eqref{eq:eps} can be arbitrarily close to zero if $\bm{u}_h$ is close to a constant. To avoid dividing by very small numbers, we approximate ratios of the form $\frac{a}{b}$ by
\[
\frac{a}{b} \approx \frac{ab}{\delta + b^2}
\]
where \rnote{$\delta$ is some small tolerance close to machine precision. We take $\delta = 10^{-14}$ in all numerical experiments; however, we tested $\delta = 10^{-13}$ and $\delta = 10^{-15}$ as well, and did not notice any significant differences in the results}. The use of this regularized ratio enforces that, if the solution approaches a constant and the denominator vanishes, the artificial viscosity parameter \eqref{eq:eps} approaches zero. This is consistent since a constant solution over an element automatically satisfies the discrete entropy identity \eqref{eq:discrete_cell_entropy_identity}. 
\end{remark}

\begin{remark}
\label{remark:K_avg}
The viscous matrices $\bm{K}_{ij}$ are functions of the conservative or entropy variables; however, these matrices can be evaluated at any admissible solution state while still retaining entropy stability. A computationally attractive option is to evaluate the viscous matrices using the local solution average $\bar{\bm{u}}_h$ as done in \cite{gaburro2023high}, which results in an element-wise constant $\bm{K}_{ij}$. This allows for the use of more efficient quadrature-free formulations \cite{hesthaven2007nodal, chan2017gpu} when evaluating the viscous terms. 
\end{remark}

\begin{remark}
It was observed in \cite{barter2010shock, guermond2011entropy, klockner2011viscous} that artificial viscosities with continuous coefficients resulted in higher quality solutions compared with artificial viscosities with discontinuous coefficients. While we note that the artificial viscosity formulation used in this paper is stable and accurate even for discontinuous coefficients, users may smooth the locally computed $\epsilon_k(\bm{u}_h)$ without losing entropy stability. 
For example, one could construct $C^0$ viscosity coefficient whose value at vertices is the maximum of $\epsilon_k(\bm{u}_h)$ over all adjacent elements \cite{klockner2011viscous}. Interpolating these vertex values using $P^1$ or $Q^1$ elements would produce a new piecewise continuous viscosity coefficient which is pointwise greater than $\epsilon_k(\bm{u}_h)$ and thus would retain entropy stability. However, this interpolated $C^0$ viscosity yielded more dissipative solutions without improving accuracy in numerical experiments, so we do not include it in this work.
\end{remark}

We note that it also possible to derive explicit expressions for a minimum norm artificial viscosity coefficient $\epsilon_k(\bm{u}_h)$ which varies at a subcell level within an element $D^k$. This is described in more detail along with numerical examples in \ref{sec:subcell}.

\subsection{Simplification of the entropy projection under nodal collocation}

Proving entropy stability using \gnote{entropy correction artificial viscosity} requires one non-standard ingredient in the DG discretization: the entropy projection \eqref{eq:entropy_proj}. Recall that Lemma~\ref{lemma:global_entropy_inequality} requires that surface numerical fluxes be evaluated using the entropy projection in order to guarantee that the interface contributions in \eqref{eq:global_entropy_ineq} are entropy dissipative. However, for nodal collocation DG methods, this additional entropy projection step is not necessary.

For collocation methods, quadrature points are collocated with interpolation points. It can be shown that the $L^2$ projection operator $\Pi_N = I_N$, where $I_N$ denotes the degree $N$ nodal interpolation operator \cite{chan2018discretely}. Because the mapping between conservative and entropy variables is invertible, this implies that the values of $\tilde{\bm{u}}$ at nodal points is simply
\[
\tilde{\bm{u}}(\bm{x}_i) = \bm{u}\LRp{\LRl{I_N\bm{v}(\bm{u}_h)}_{\bm{x}=\bm{x}_i}} = \bm{u}(\bm{v}(\bm{u}_h(\bm{x}_i))) = \bm{u}_h(\bm{x}_i).
\]
such that the entropy projection at quadrature/interpolation points reduces to the evaluation of the local polynomial solution $\bm{u}_h$. If the surface quadrature points are a subset of the collocated volume quadrature points (as is the case for DGSEM or DG spectral element methods \cite{kopriva2010quadrature} \rnote{which utilize collocated Gauss-Lobatto-Legendre quadrature for both volume and surface integrals}), then the evaluation of the entropy projection at surface quadrature points reduces to the evaluation of the solution $\bm{u}_h$ at those points as well. A similar structure holds under diagonal-norm nodal SBP discretizations on simplicial elements \cite{chen2017entropy, crean2018entropy}.

\subsection{Estimates for the volume entropy residual}

We wish to estimate the magnitude of the volume entropy residual $\delta_k(\bm{u}_h)$, which turns out to enjoy a super-convergence property. We first assume exact integration for the proofs in this section, and discuss the effects of inexact quadrature in Section~\ref{sec:quadrature}. \vnote{Before proving this estimate, we recall the following theorem from \cite{godlewski2013numerical}:
\begin{theorem}{(Theorem 3.1 in \cite{godlewski2013numerical})}
\label{thm:roe_linearization}
Assume that the hyperbolic system \eqref{eq:ncl} has a strictly convex entropy. Then there exists at least one Roe-type linearization matrix $\bar{\bm{A}}_m = \bar{\bm{A}}_m(\bm{u}_1, \bm{u}_2)$ such that 
\[
\bm{f}_m(\bm{u}_1) - \bm{f}_m(\bm{u}_2) = \bar{\bm{A}}_m(\bm{u}_1, \bm{u}_2) (\bm{u}_1 - \bm{u}_2).
\]
\end{theorem}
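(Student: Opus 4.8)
The plan is to construct $\bar{\bm{A}}_m$ by passing to the entropy variables and averaging the symmetrized flux Jacobian along a straight-line path, so that the entropy structure guarantees both the mean-value identity and the hyperbolic (diagonalizable) character expected of a Roe-type matrix. Throughout I write $\bm{v}_1 = \bm{v}(\bm{u}_1)$ and $\bm{v}_2 = \bm{v}(\bm{u}_2)$, and regard $\bm{u}$ and $\bm{f}_m$ as functions of $\bm{v}$ via the globally invertible change of variables $\bm{u} = \bm{u}(\bm{v})$.

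First I would record the two structural facts supplied by the strictly convex entropy. Strict convexity of $S$ means the Hessian $\pd{\bm{v}}{\bm{u}} = \pdn{S}{\bm{u}}{2}$ is symmetric positive definite, hence $\bm{v}(\bm{u})$ is a diffeomorphism and its inverse Jacobian $\pd{\bm{u}}{\bm{v}}$ is also symmetric positive definite. Second --- and this is where the entropy genuinely enters --- the flux Jacobian expressed in entropy variables, $\pd{\bm{f}_m}{\bm{v}} = \pd{\bm{f}_m}{\bm{u}}\pd{\bm{u}}{\bm{v}}$, is symmetric. I would obtain this from the entropy potential $\psi_m = \bm{v}^T\bm{f}_m - F_m$: differentiating with respect to $\bm{v}$ and using the entropy--flux compatibility $\pd{F_m}{\bm{u}} = \bm{v}^T\pd{\bm{f}_m}{\bm{u}}$ (equivalently \eqref{eq:entropy_chain_rule}) collapses the expression to $\pd{\psi_m}{\bm{v}} = \bm{f}_m^T$, so that $\pd{\bm{f}_m}{\bm{v}} = \pdn{\psi_m}{\bm{v}}{2}$ is a Hessian and therefore symmetric.

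With these facts in hand, I would parametrize the segment $\bm{v}(\theta) = \bm{v}_2 + \theta(\bm{v}_1 - \bm{v}_2)$ for $\theta \in [0,1]$ and apply the fundamental theorem of calculus to both $\bm{u}$ and $\bm{f}_m$ along this path. This yields $\bm{u}_1 - \bm{u}_2 = \bar{\bm{H}}(\bm{v}_1 - \bm{v}_2)$ and $\bm{f}_m(\bm{u}_1) - \bm{f}_m(\bm{u}_2) = \bar{\bm{B}}_m(\bm{v}_1 - \bm{v}_2)$, where $\bar{\bm{H}} = \int_0^1 \pd{\bm{u}}{\bm{v}}(\bm{v}(\theta)) \diff{\theta}$ is symmetric positive definite (an average of SPD matrices, hence invertible) and $\bar{\bm{B}}_m = \int_0^1 \pd{\bm{f}_m}{\bm{v}}(\bm{v}(\theta)) \diff{\theta}$ is symmetric. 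Defining $\bar{\bm{A}}_m = \bar{\bm{B}}_m \bar{\bm{H}}^{-1}$, I would verify the Roe identity directly: $\bar{\bm{A}}_m(\bm{u}_1 - \bm{u}_2) = \bar{\bm{B}}_m\bar{\bm{H}}^{-1}\bar{\bm{H}}(\bm{v}_1 - \bm{v}_2) = \bar{\bm{B}}_m(\bm{v}_1 - \bm{v}_2) = \bm{f}_m(\bm{u}_1) - \bm{f}_m(\bm{u}_2)$. Finally, to justify calling $\bar{\bm{A}}_m$ a \emph{Roe-type} linearization, I would note that $\bar{\bm{H}}^{-1/2}\bar{\bm{A}}_m\bar{\bm{H}}^{1/2} = \bar{\bm{H}}^{-1/2}\bar{\bm{B}}_m\bar{\bm{H}}^{-1/2}$ is symmetric, so $\bar{\bm{A}}_m$ is similar to a symmetric matrix and thus has real eigenvalues and a complete eigenbasis.

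The main obstacle is the symmetry of $\pd{\bm{f}_m}{\bm{v}}$: this is the one step that uses the hypothesis in an essential way and that distinguishes a genuine Roe-type matrix from the trivial mean-value matrix $\int_0^1 \pd{\bm{f}_m}{\bm{u}} \diff{\theta}$ obtained by averaging in $\bm{u}$. A secondary technical point I would address is admissibility along the path: the construction integrates in entropy-variable space precisely because the set of admissible entropy variables is convex, so the segment $\bm{v}(\theta)$ remains in the domain where $\bm{u}(\bm{v})$ and $\bm{f}_m(\bm{u}(\bm{v}))$ are smooth, whereas the corresponding path in $\bm{u}$-space need not stay in the (possibly non-convex) set of admissible conservative states.
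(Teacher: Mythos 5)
Your construction is correct and is essentially the same as the one the paper attributes to \cite{godlewski2013numerical}: the Roe matrix is built as the product $\bar{\bm{B}}_m\bar{\bm{H}}^{-1}$ of the flux Jacobian $\pd{\bm{f}_m}{\bm{v}}$ and the change-of-variables Jacobian $\pd{\bm{u}}{\bm{v}}$, each integrated along the straight-line path in entropy-variable space, with the symmetry of $\pd{\bm{f}_m}{\bm{v}}$ (via the entropy potential) and the positive definiteness of $\pd{\bm{u}}{\bm{v}}$ supplying the mean-value identity and hyperbolicity. This also yields the continuity of $\bar{\bm{A}}_m$ in both arguments that the paper later relies on, so nothing is missing.
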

The proof of Theorem~\ref{thm:roe_linearization} in \cite{godlewski2013numerical} constructs $\bar{\bm{A}}_m(\bm{u}_1, \bm{u}_2)$ as the product of matrices defined in terms of the flux and entropy variable Jacobian matrices integrated over phase space; thus, $\bar{\bm{A}}_m(\bm{u}_1, \bm{u}_2)$ is a continuous function of both its arguments.} We can now prove the following estimate:
\begin{lemma}
\label{lemma:entropy_error}
Let $\bm{u}_h \in \LRs{P^N{D^k}}^d$ and define \vnote{$\tilde{\bm{u}} = \bm{u}\LRp{\Pi_N\bm{v}(\bm{u}_h)}$. Assuming that $\tilde{\bm{u}}$, $\bm{f}_m(\bm{u}_h)$, and $\bm{f}_m(\tilde{\bm{u}})$ are differentiable,} the volume entropy residual $\delta_k(\bm{u}_h)$ \eqref{eq:entropy_ineq_error} satisfies
\begin{align}
&\delta_k(\bm{u}_h) = \rnote{\sum_{m=1}^d} {\int_{D^k}{-\pd{\Pi_N \bm{v}(\bm{u}_h)}{x_m}}^T\bm{f}_m(\bm{u}_h) + \int_{\partial {D^k}}{\psi_m(\tilde{\bm{u}}) n_m}} = \nonumber\\ &\rnote{\sum_{m=1}^d}\int_{D^k}\LRp{\bar{\bm{A}}_m\pd{\Pi_N\bm{v}(\bm{u}_h)}{x_m} - \Pi_N\LRp{\bar{\bm{A}}_m\pd{\Pi_N\bm{v}(\bm{u}_h)}{x_m}}}^T\LRp{\Pi_N \bm{v}(\bm{u}_h) - \bm{v}(\bm{u}_h)}
\label{eq:entropy_error}
\end{align}
for $m = 1,\ldots, d$, where \vnote{$\bar{\bm{A}}_m = \bar{\bm{A}}_m(\tilde{\bm{u}}, \bm{u}_h)$ is a Roe-type linearization matrix.}.
\end{lemma}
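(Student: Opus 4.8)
The plan is to collapse $\delta_k(\bm{u}_h)$ into a single volume integral of a flux difference, linearize that difference, and then exploit the $L^2$-orthogonality of the entropy projection error. Write $\bm{v}_h = \Pi_N\bm{v}(\bm{u}_h)$ and note the central structural fact that $\tilde{\bm{u}} = \bm{u}(\bm{v}_h)$ is the conservative state whose entropy variables are exactly $\bm{v}_h$, i.e.\ $\bm{v}(\tilde{\bm{u}}) = \bm{v}_h$. First I would dispose of the surface term in \eqref{eq:entropy_ineq_error}. Since $\tilde{\bm{u}}$ is assumed differentiable and $\bm{v}(\tilde{\bm{u}}) = \bm{v}_h$, I can apply the continuous cell entropy equality \eqref{eq:cell_entropy_equality} to $\tilde{\bm{u}}$ (equivalently, use the potential relation $\pd{\psi_m}{\bm{v}} = \bm{f}_m$ together with the divergence theorem) to rewrite $\int_{\partial D^k}\psi_m(\tilde{\bm{u}})n_m = \int_{D^k}\pd{\bm{v}_h}{x_m}^T\bm{f}_m(\tilde{\bm{u}})$. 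Substituting this into \eqref{eq:entropy_ineq_error} collapses $\delta_k(\bm{u}_h)$ to the purely volumetric flux-difference expression $\sum_{m=1}^d\int_{D^k}-\pd{\bm{v}_h}{x_m}^T\LRp{\bm{f}_m(\bm{u}_h) - \bm{f}_m(\tilde{\bm{u}})}$.

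Next I would linearize the flux difference. Applying Theorem~\ref{thm:roe_linearization} to the states $\tilde{\bm{u}}$ and $\bm{u}_h$, and composing the resulting Roe matrix with the symmetric positive definite conservative-to-entropy Jacobian as in the phase-space construction noted just before the statement, I would express $\bm{f}_m(\bm{u}_h) - \bm{f}_m(\tilde{\bm{u}}) = \bar{\bm{A}}_m\LRp{\bm{v}(\bm{u}_h) - \bm{v}_h}$, where $\bar{\bm{A}}_m = \bar{\bm{A}}_m(\tilde{\bm{u}}, \bm{u}_h)$ is symmetric. The symmetry is the key property, since it lets me move $\bar{\bm{A}}_m$ onto the derivative factor; after fixing signs the integrand becomes $\LRp{\bar{\bm{A}}_m\pd{\bm{v}_h}{x_m}}^T\LRp{\bm{v}_h - \bm{v}(\bm{u}_h)}$.

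Finally I would invoke $L^2$-orthogonality. Because $\bm{v}_h - \bm{v}(\bm{u}_h) = \Pi_N\bm{v}(\bm{u}_h) - \bm{v}(\bm{u}_h)$ is orthogonal to $\LRs{P^N(D^k)}^n$ by definition of $\Pi_N$, the polynomial $\Pi_N\LRp{\bar{\bm{A}}_m\pd{\bm{v}_h}{x_m}}$ integrates to zero against it, so I may subtract it for free from the first factor. This yields exactly \eqref{eq:entropy_error} and simultaneously exposes the super-convergence, since the residual is now an integral of a product of two projection errors. The main obstacle is the linearization step: Theorem~\ref{thm:roe_linearization} as stated produces a matrix acting on the conservative difference $\bm{u}_h - \tilde{\bm{u}}$, which is \emph{not} $L^2$-orthogonal to polynomials, so care is needed to recast it as a symmetric matrix acting on the entropy difference $\bm{v}(\bm{u}_h) - \bm{v}_h$ (the object that actually is a projection error). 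Verifying this symmetry, and checking the differentiability hypotheses that the divergence theorem and chain rule require, is where the real work lies.
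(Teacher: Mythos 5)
Your proposal follows essentially the same route as the paper's own proof: convert the surface term to a volume term via the entropy potential identity \eqref{eq:equality_entropy_identity} applied to $\tilde{\bm{u}}$, linearize the flux difference $\bm{f}_m(\tilde{\bm{u}}) - \bm{f}_m(\bm{u}_h)$ with a Roe-type matrix acting on the entropy-variable difference $\Pi_N\bm{v}(\bm{u}_h) - \bm{v}(\bm{u}_h)$, and finish by subtracting $\Pi_N\LRp{\bar{\bm{A}}_m\pd{\Pi_N\bm{v}(\bm{u}_h)}{x_m}}$ using $L^2$-orthogonality of the projection error. If anything, you are more careful than the paper on the one delicate point: the paper invokes Theorem~\ref{thm:roe_linearization} (stated for conservative-variable differences) but silently uses the symmetric entropy-variable form $\bm{f}_m(\tilde{\bm{u}})-\bm{f}_m(\bm{u}_h) = \bar{\bm{A}}_m\LRp{\Pi_N \bm{v}(\bm{u}_h) - \bm{v}(\bm{u}_h)}$ coming from the phase-space construction, which is exactly the recasting and symmetry check you flag as the real work.
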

\begin{proof}
\vnote{Under the assumptions of differentiability}, the equality version of the entropy identity \eqref{eq:cell_entropy_equality} holds under exact integration since $\tilde{\bm{u}} = \bm{u}\LRp{\Pi_N\bm{v}(\bm{u}_h)}$ and
\begin{equation}
\int_{\partial {D^k}}{\psi_m(\tilde{\bm{u}}) n_m} = \int_{D^k} {\pd{\Pi_N\bm{v}(\bm{u}_h)}{x_m}}^T\bm{f}_m(\tilde{\bm{u}}).
\label{eq:equality_entropy_identity}
\end{equation}
Applying this to the left-hand side of \eqref{eq:entropy_error} yields
\[
\int_{D^k}{-\pd{\Pi_N \bm{v}(\bm{u}_h)}{x_m}}^T\bm{f}_m(\bm{u}_h) + \int_{\partial {D^k}}{\psi_m(\tilde{\bm{u}}) n_m}  = \int_{D^k}{\pd{\Pi_N \bm{v}(\bm{u}_h)}{x_m}}^T\LRp{\bm{f}_m(\tilde{\bm{u}})-\bm{f}_m(\bm{u}_h)}.
\]
Since $\tilde{\bm{u}} = \bm{u}\LRp{\Pi_N\bm{v}(\bm{u}_h)}$, \vnote{Theorem~\ref{thm:roe_linearization} implies there exists a matrix $\bar{\bm{A}}_m = \bar{\bm{A}}_m(\tilde{\bm{u}}, \bm{u}_h)$ such that}
\[
\bm{f}_m(\tilde{\bm{u}})-\bm{f}_m(\bm{u}_h) = \bar{\bm{A}}_m\LRp{\Pi_N \bm{v}(\bm{u}_h) - \bm{v}(\bm{u}_h)}.
\]
This yields that
\[
\int_{D^k}{\pd{\Pi_N \bm{v}(\bm{u}_h)}{x_m}}^T\LRp{\bm{f}_m(\tilde{\bm{u}})-\bm{f}_m(\bm{u}_h)} = \int_{D^k}\LRp{\bar{\bm{A}}_m\pd{\Pi_N\bm{v}(\bm{u}_h)}{x_m}}^T\LRp{\Pi_N \bm{v}(\bm{u}_h) - \bm{v}(\bm{u}_h)}.
\]
Finally, since ${\Pi_N \bm{v}(\bm{u}_h) - \bm{v}(\bm{u}_h)}$ is the $L^2$ projection error, it is orthogonal to any degree $N$ polynomial. This implies that it is orthogonal to $\Pi_N \LRp{\bar{\bm{A}}_m\pd{\Pi_N\bm{v}(\bm{u}_h)}{x_m}}$; the proof is completed by subtracting 
\[
\Pi_N \LRp{\bar{\bm{A}}_m\pd{\Pi_N\bm{v}(\bm{u}_h)}{x_m}}^T \LRp{\Pi_N \bm{v}(\bm{u}_h) - \bm{v}(\bm{u}_h)} = 0.
\]
\end{proof}
Applying the Cauchy-Schwarz inequality to \eqref{eq:entropy_error} yields that 
\begin{align*}
&\LRb{\int_{D^k}{-\pd{\Pi_N \bm{v}(\bm{u}_h)}{x_m}}^T\bm{f}_m(\bm{u}_h) + \int_{\partial {D^k}}{\psi_m(\tilde{\bm{u}}) n_m}} \leq\\ 
&\nor{\bar{\bm{A}}_m\pd{\Pi_N\bm{v}(\bm{u}_h)}{x_m} - \Pi_N\LRp{\bar{\bm{A}}_m\pd{\Pi_N\bm{v}(\bm{u}_h)}{x_m}}}_{L^2(D^k)}\nor{\Pi_N \bm{v}(\bm{u}_h) - \bm{v}(\bm{u}_h)}_{L^2(D^k)}.
\end{align*}
If $\bm{u}_h$ is a high order approximation of a sufficiently regular solution $\bm{u}$ and the entropy $S(\bm{u}_h)$ is convex and sufficiently regular (e.g., when density and internal energy are uniformly bounded away from zero for the compressible Euler equations) such that $\bm{v}(\bm{u})$ is also a sufficiently regular mapping, \vnote{then both $\bar{\bm{A}}_m\pd{\Pi_N\bm{v}(\bm{u}_h)}{x_m}$ and $\bm{v}(\bm{u}_h)$ are sufficiently regular functions. Then, standard $L^2$ best approximation estimates \cite{brenner2008mathematical} imply that $\nor{\bar{\bm{A}}_m\pd{\Pi_N\bm{v}(\bm{u}_h)}{x_m} - \Pi_N\LRp{\bar{\bm{A}}_m\pd{\Pi_N\bm{v}(\bm{u}_h)}{x_m}}}_{L^2(D^k)}$ and $\nor{\Pi_N \bm{v}(\bm{u}_h) - \bm{v}(\bm{u}_h)}_{L^2(D^k)}$ are both $O(h^{N+1+\sqrt{d}})$, where the factor of $\sqrt{d}$ comes from the scaling $\LRb{D^k} = O(h^d)$ in $d$ dimensions. 
As a result, the magnitude of the volume entropy residual is }
\begin{equation}
\delta_k(\bm{u}_h) = {\int_{D^k}{-\pd{\Pi_N \bm{v}(\bm{u}_h)}{x_m}}^T\bm{f}_m(\bm{u}_h) + \int_{\partial {D^k}}{\psi_m(\tilde{\bm{u}}) n_m}} = O(h^{2N+2 + d})
\label{eq:volume_entropy_residual_estimate}
\end{equation}
This result is a sharpening of Theorem 5.5 in \cite{chen2020review} and consistency estimates in \cite{gaburro2023high, mantri2024fully}.


\begin{remark}
The observation that \eqref{eq:equality_entropy_identity} is satisfied exactly under exact integration implies that standard weak form DG with the entropy projection and an entropy stable interface flux is entropy stable up to the accuracy of the volume quadrature. This was first pointed out in \cite{colombo2022entropy}. 
\end{remark}

\subsubsection{The effect of inexact quadrature}
\label{sec:quadrature}
While we have assumed exact integration in the proof of Lemma~\ref{lemma:entropy_error}, a similar estimate holds for a sufficiently accurate quadrature. Suppose that the volume quadrature is exact for polynomial integrands $f \in P^{M_{\rm vol}}(D^k)$ and the surface quadrature is exact for polynomial integrands $f \in P^{M_{\rm surf}}(\partial D^k)$. For example, approximating 1D volume integrals using an $(N+1)$ point Gauss quadrature rule would be exact for $f \in P^{2N+1}$, such that $M_{\rm vol} = 2N+1$. Then, since $\pd{\Pi_N \bm{v}(\bm{u}_h)}{x_m}\in P^{N-1}\LRp{D^k}$, under sufficient regularity of $\bm{f}_m$ and $\bm{u}_h$,
\begin{gather*}
\LRb{\LRp{\bm{f}_m(\tilde{\bm{u}}), \pd{\Pi_N \bm{v}(\bm{u}_h)}{x_m}} - \int_{D^k}\pd{\Pi_N \bm{v}(\bm{u}_h)}{x_m}^T\bm{f}_m(\tilde{\bm{u}}) } = O(h^{M_{\rm vol} + 1 + d}) \\
\LRb{\LRa{\psi_m(\tilde{\bm{u}}) n_m, 1} - \int_{\partial D^k}\psi_m(\tilde{\bm{u}})n_m} = O(h^{M_{\rm surf} + d})
\end{gather*}
Thus, if $M_{\rm vol} \geq 2N+1$ and $M_{\rm surf} \geq 2N+2$, then the quadrature error is of the same order as the volume entropy residual estimate in Lemma~\ref{lemma:entropy_error}. We note that these quadrature exactness conditions are one order higher than necessary to prove optimal rates of convergence in \cite{huang2017error}.

\subsection{Spurious \vnote{gradient} null space modes}

Recall that the \gnote{entropy correction artificial viscosity coefficient} \eqref{eq:eps} involves the ratio between the volume entropy residual and the viscous entropy dissipation on an element $D^k$:
\[
\epsilon_k(\bm{u}_h) = \frac{\min(0, \delta_k(\bm{u}_h))}{\sum_{i,j=1}^d  \LRp{\bm{\Theta}_i, \bm{K}_{ij}\bm{\Theta}_j}_{D^k}}.
\]
We wish to control the magnitude of the artificial viscosity coefficient; if $\epsilon_k(\bm{u}_h)$ is too large, it can negatively impact the maximum stable time-step size. Thus, if the denominator approaches zero, the numerator should approach zero at the same rate or faster. 

Since $\bm{\Theta}_i$ is a consistent approximation of $\pd{\Pi_N \bm{v}(\bm{u}_h)}{x_i}$, the denominator vanishes if the solution is constant. Luckily, the numerator (the volume entropy residual) also vanishes if the solution is constant, and numerical experiments suggest that the numerator converges to zero at the same rate or faster as $\bm{u}_h$ approaches a constant. Unfortunately, for the BR-1 viscous discretization used in this work, spurious modes make it possible for the viscous entropy dissipation (the denominator) to be arbitrarily small compared to the volume entropy residual (the numerator). 

The gradient $\bm{\Theta}_i$ is approximated in \eqref{eq:ldg1} using a standard DG discretization of the gradient with a central flux. It is known that BR-1 discretizations of the Laplacian result in spurious non-constant null space eigenmodes \cite{sherwin20062d, hesthaven2007nodal}. Discretizations of the gradient with a central flux \eqref{eq:ldg1} also admit similar spurious modes. such that applying \eqref{eq:ldg1} to such modes yields $\bm{\Theta}_1, \ldots, \bm{\Theta}_d = \bm{0}$ \cite{john2016stable}. Figure~\ref{fig:spurious} illustrates examples of such modes for degree $N=2$ approximations on uniform periodic meshes.
\begin{figure}
\centering
\includegraphics[width=.49\textwidth]{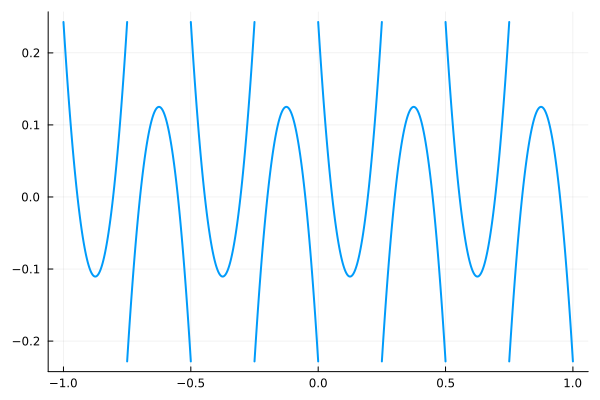}
\includegraphics[width=.49\textwidth]{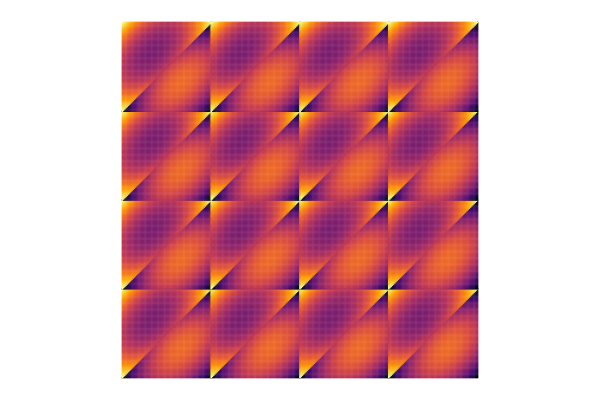}
\caption{Examples of spurious null space modes of the BR-1 gradient in 1D and 2D.}
\label{fig:spurious}
\end{figure}
While adding penalty terms to the divergence equation \eqref{eq:ldg3} suppresses such spurious eigenmodes for Laplacian and viscous flux derivatives \cite{carpenter2014entropy, chan2022entropy}, they do not suppress spurious \textit{gradient} modes. 

These spurious modes do not appear to be a major issue in practice when an upwind-like numerical flux is used for the convective discretization \eqref{eq:dg_form}, as the jump penalization tends to rapidly dissipate away spurious modes \cite{chan2017penalty}. We also note that non-central discretizations of the gradient, such as in the local DG method \cite{cockburn1998local, cockburn2007analysis}, do not produce spurious null space modes, and when combined with the approach in this work, tend to produce smaller \gnote{entropy correction artificial viscosity} coefficients. This will be analyzed in more detail in future work.

\section{Comparison with local entropy correction terms}
\label{sec:entropy_correction}

The \gnote{entropy correction artificial viscosity} constructed in this work \bnote{is inspired by and} most closely resembles the \bnote{local} entropy correction term of Abgrall \cite{abgrall2018general} and Abgrall, \"{O}ffner, and Ranocha \cite{abgrall2022reinterpretation}. \bnote{These local correction techniques add a dissipative term based on the zero-mean variation of the entropy variables over an element, and use tune the magnitude of this dissipative term to ensure a local entropy inequality}. More recent versions of this entropy correction term have both introduced the change of variables matrix $\bm{A}_0 = \pd{\bm{u}}{\bm{v}}$ as an inner product scaling and replaced the zero-mean variation term with the local derivative \cite{gaburro2023high, mantri2024fully}. \bnote{Since there are several versions of the aforementioned local entropy correction terms in the literature, we will construct two local entropy corrections which correspond most directly to the entropy correction artificial viscosity introduced in Section~\ref{sec:av}. We will then compare these two versions of the local entropy correction to some existing local entropy corrections in the literature \cite{abgrall2022reinterpretation, gaburro2023high, mantri2024fully}. 

The first local entropy correction we will construct is based on the deviation of the entropy variables from the mean value (corresponding to \cite{abgrall2018general, abgrall2022reinterpretation}). Using the notation introduced in this work, the mean-value local entropy correction can be expressed as follows: 
\begin{gather}
\LRp{\bm{g}_{\rm visc}, \bm{w}}_{D^k} = \LRp{\epsilon_k(\bm{u}_h) \bm{A}_0 (\Pi_N \bm{v}(\bm{u}_h) - \bar{\bm{v}}), \bm{w}}_{D^k}, \quad \forall \bm{w}\in \LRs{P^N\LRp{D^k}}^n \label{eq:mv_local_entropy_correction}\\
\epsilon_k(\bm{u}_h) \geq \frac{-\min(0, \delta_k(\bm{u}_h))}{\LRp{\bm{A}_0 (\Pi_N \bm{v}(\bm{u}_h) - \bar{\bm{v}}), \Pi_N \bm{v}(\bm{u}_h) - \bar{\bm{v}}}_{D^k}}, \label{eq:mv_local_entropy_correction_coeff}
\end{gather}
where $\bar{\bm{v}}$ denotes the cell average of $\Pi_N \bm{v}(\bm{u}_h)$. We additionally assume that $\bm{A}_0 =  \pd{\bm{u}}{\bm{v}}$ is evaluated using the cell average of the solution. Note that the entropy dissipation in the denominator of \eqref{eq:mv_local_entropy_correction_coeff} can then be recovered by taking $\bm{w} = \Pi_N\bm{v}(\bm{u}_h)$ in \eqref{eq:mv_local_entropy_correction} and using the fact that $\Pi_N \bm{v}(\bm{u}_h) - \bar{\bm{v}}$ is $L^2$ orthogonal to constants.

The second local entropy correction we will construct is a derivative-based version (corresponding to, for example, \cite{gaburro2023high, mantri2024fully}). 
The derivative-based local entropy correction can be expressed as follows:
\begin{gather}
\LRp{\bm{g}_{\rm visc}, \bm{w}}_{D^k} = \sum_{m=1}^d \LRp{\epsilon_k(\bm{u}_h) \bm{A}_0 \pd{\Pi_N\bm{v}(\bm{u}_h)}{x_m}, \pd{\bm{w}}{x_m}}_{D^k}, \quad \forall \bm{w}\in \LRs{P^N\LRp{D^k}}^n \label{eq:deriv_local_entropy_correction}
\\
\epsilon_k(\bm{u}_h) \geq \frac{-\min(0, \delta_k(\bm{u}_h))}{\sum_{m=1}^d \LRp{\bm{A}_0 \pd{\Pi_N\bm{v}(\bm{u}_h)}{x_m}, \pd{\Pi_N\bm{v}(\bm{u}_h)}{x_m}}_{D^k} }_{D^k}. \nonumber
\end{gather}
Note that, for degree $N=1$, $\pd{\Pi_N\bm{v}(\bm{u}_h)}{x_i}$ and $\Pi_N \bm{v}(\bm{u}_h) - \bar{\bm{v}}$ are identical up to a constant scaling. Since the local correction terms are independent of element-wise constant scalings of the entropy dissipation term (any scaling will simply be absorbed into the coefficient $\epsilon_k(\bm{u}_h)$), the contributions from both local entropy correction terms \eqref{eq:mv_local_entropy_correction} and \eqref{eq:deriv_local_entropy_correction} will be identical for the case of $N=1$.

The first difference between the local corrections defined in \eqref{eq:mv_local_entropy_correction}, \eqref{eq:deriv_local_entropy_correction} and the terms introduced in \cite{abgrall2022reinterpretation} is the presence of an inner product scaling $\bm{A}_0$. As mentioned previously, the inner product is now scaled by the change of variables matrix $\bm{A}_0 = \pd{\bm{u}}{\bm{v}}$, whereas in \cite{abgrall2022reinterpretation}, no matrix scaling was used. We have observed this to make a significant difference in the robustness of the local entropy correction: if the $L^2$ inner product is not scaled by $\bm{A}_0$, DG solutions of the compressible Euler equations which contain shocks tend to blow up rapidly. 

The second difference is the definition of the entropy residual. In \cite{abgrall2022reinterpretation}, the entropy residual is computed in terms of numerical entropy fluxes involving exterior values of the solution at element interfaces. Here, we use the volume entropy residual \eqref{eq:entropy_ineq_error} instead, which is a purely local estimate. Moreover, the estimates for the entropy residual derived in \cite{abgrall2018general} are $O(h^{N+1+d})$, while the estimates from Lemma~\ref{lemma:entropy_error} and \eqref{eq:volume_entropy_residual_estimate} for the magnitude of $\delta_k(\bm{u}_h)$ are $O(h^{2N+2+d})$.}

\begin{figure}
\centering
\subfloat[$N=1$, $128\times 128 \times 2$ elements]{\includegraphics[width=.4\textwidth, trim={31.5em 7em 31.5em 7em}, clip]{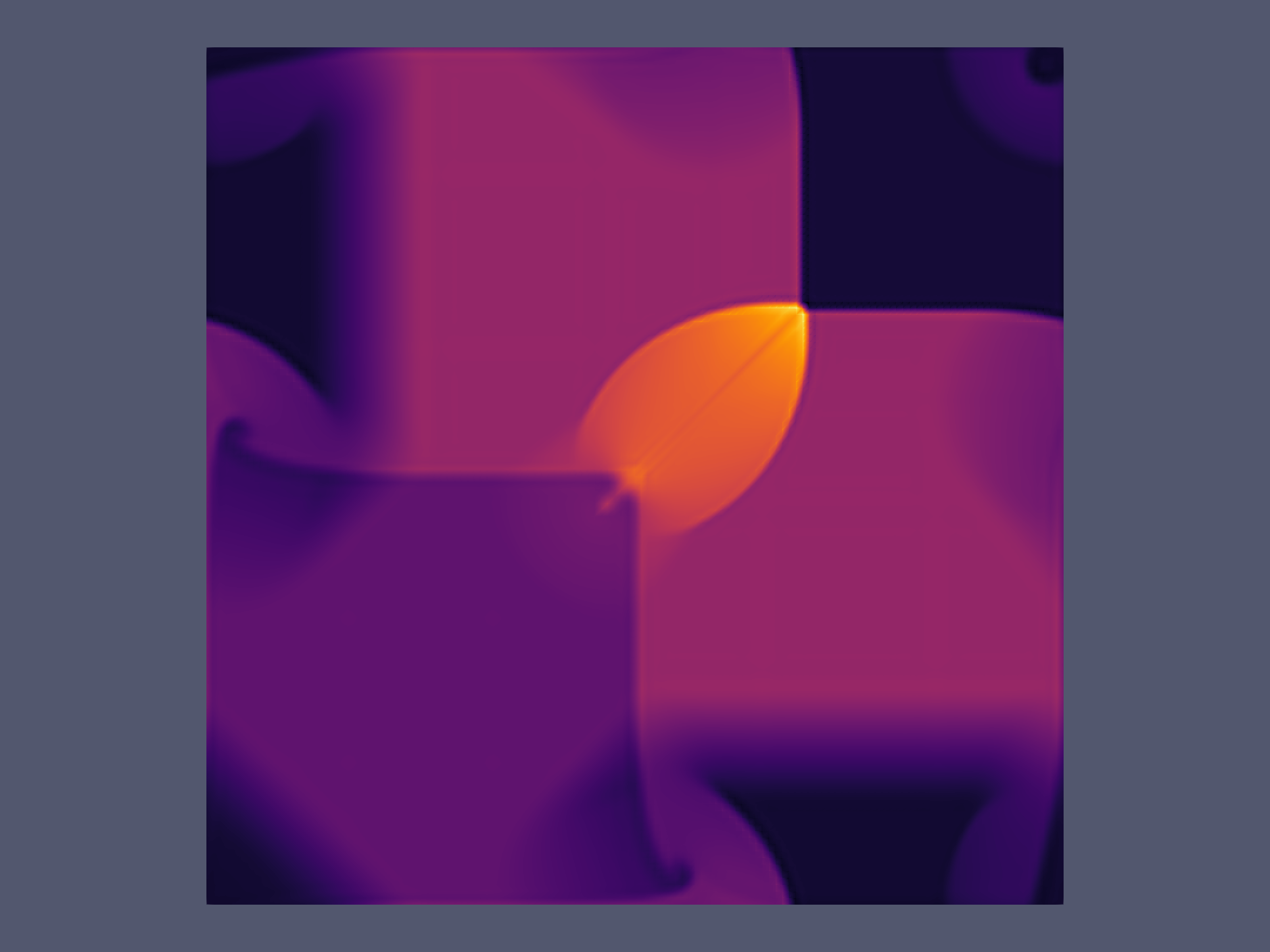}}
\hspace{.2em}
\subfloat[$N=3$, $64\times 64 \times 2$ elements]{\includegraphics[width=.4\textwidth, trim={31.5em 7em 31.5em 7em}, clip]{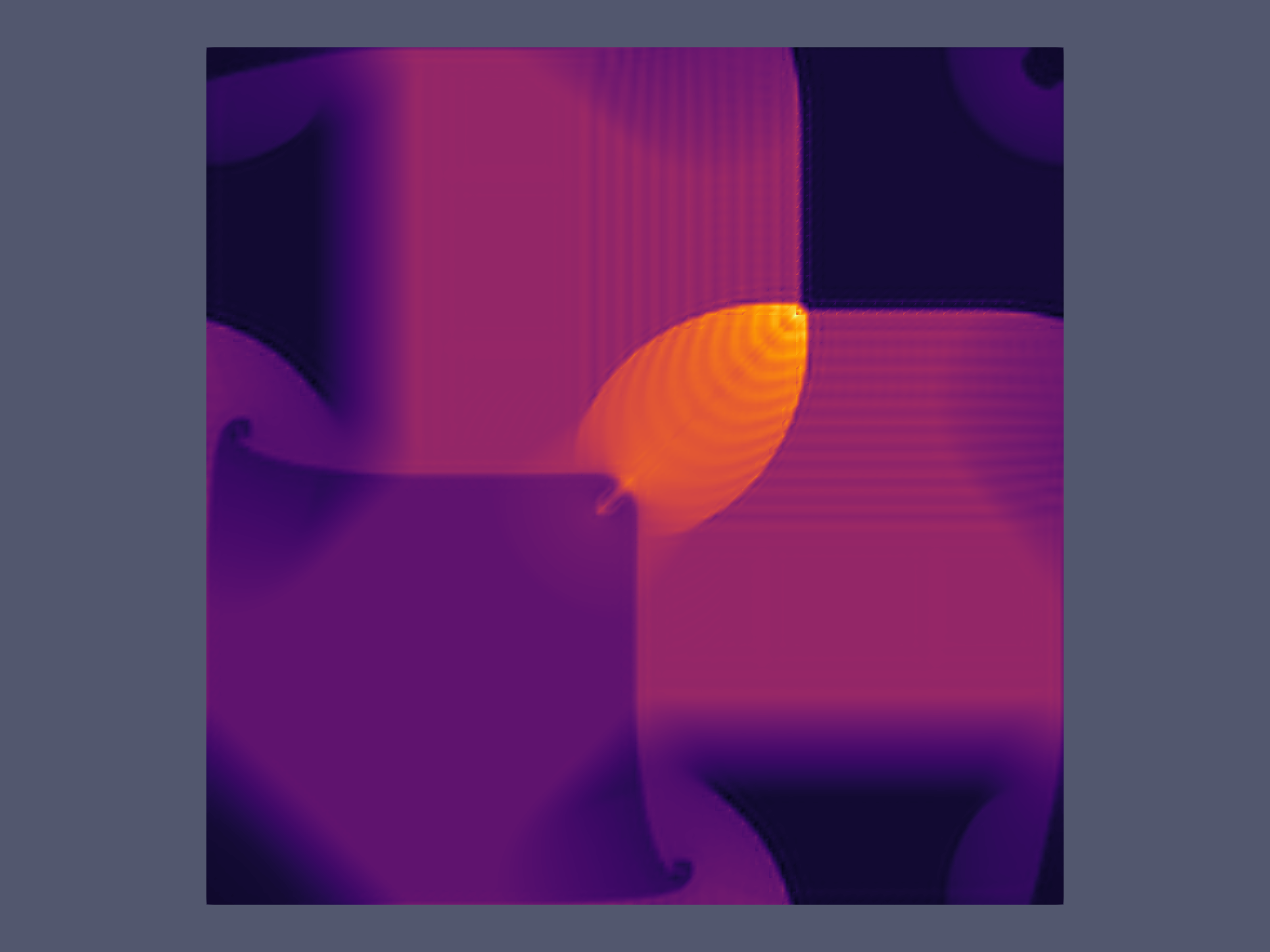}}
\caption{Solutions \rnote{(density)} to the 2D Riemann problem using \bnote{a matrix-weighted version of the local entropy correction term \eqref{eq:mv_local_entropy_correction}}. Note that the degree $N=3$ solution contains numerical artifacts along the shock front.}
\label{fig:correction}
\end{figure}

\bnote{Finally, we note that the two local entropy corrections \eqref{eq:mv_local_entropy_correction} and \eqref{eq:deriv_local_entropy_correction} do not behave the same in practice. While both approaches seemed to stabilize high order DG solutions with shocks in 1D, we observed that adding only the derivative-based local entropy correction term of \cite{gaburro2023high, mantri2024fully} was not sufficient to stabilize DG simulations of the 2D Riemann problem from \cite{chan2018discretely} (details of the problem setup are provided in Section~\ref{sec:riemann}). However, while \eqref{eq:mv_local_entropy_correction} successfully stabilizes the 2D Riemann problem, it also produces non-physical artifacts at higher orders of approximation. Figure~\ref{fig:correction} shows DG solutions using \eqref{eq:mv_local_entropy_correction}. The degree $N=3$ solution contains numerical artifacts along the shock front moving towards the upper-right hand corner, though these artifacts are not observed for the degree $N=1$ solution. }

\subsection{Comparison with other artificial viscosity methods}

The literature on artificial viscosity is vast; even when restricting the literature review specifically to high order DG methods, there are hundreds of papers on artificial viscosity methods for nonlinear conservation laws. These artificial viscosity methods incorporate a variety of techniques, from modal and residual-based indicator methods to data-driven or machine-learning approaches \cite{persson2006sub, barter2010shock, klockner2011viscous, lv2014taming, michoski2016comparison, glaubitz2019smooth, discacciati2020controlling, yu2020study, zeifang2021data}. 

The artificial viscosity in this paper differs from many artificial viscosity methods in that the ``indicator'' $\epsilon_k(\bm{u}_h)$ is chosen to be the smallest cell-local value to guarantee an entropy inequality. As a result of this choice, the artificial viscosity in this paper \textit{is not intended to damp spurious oscillations}. For example, when applied to the constant coefficient linear scalar advection equation, the artificial viscosity coefficient $\epsilon_k(\bm{u}_h)$ vanishes, since the standard weak formulation of DG already satisfies a cell entropy inequality and the volume entropy residual $\delta_k(\bm{u}_h) = 0$. This behavior is contrast to artificial viscosity methods which are designed based on solution regularity or smoothness indicators. We note that it is straightforward to increase the magnitude of the artificial viscosity indicator $\epsilon_k(\bm{u}_h)$ if one wishes to damp spurious oscillations; however, the intent of the artificial viscosity in this work is to determine a \textit{minimal} artificial viscosity which yields an entropy inequality. 

The \gnote{entropy correction artificial viscosity} and choice of $\epsilon_k(\bm{u}_h)$ \eqref{eq:eps} are also related to the ``entropy residual" or ``entropy commutator'' indicators used in \cite{guermond2018second, guermond2019invariant}, which are themselves a generalization of entropy viscosity and entropy residual methods \cite{guermond2011entropy, zingan2013implementation, lv2016entropy}. The entropy commutator approach to entropy viscosity adds dissipation proportionally to the violation of the chain rule
\begin{equation}
\bm{v}(\bm{u}_h)^T\pd{\bm{f}(\bm{u}_h)}{x} - \pd{F(\bm{u})}{x} \approx 0, \qquad F(\bm{u}) = \bm{v}(\bm{u}_h)^T \bm{f}(\bm{u}_h) - \psi(\bm{u}_h).
\label{eq:chainrule}
\end{equation}
The approach taken in this work also adds dissipation proportional to the violation of \eqref{eq:chainrule}, but with some distinctions. 
The magnitude of artificial viscosities (including entropy viscosity) typically depends on a heuristic normalization and scaling \cite{kornelus2017scaling}. In contrast, once the viscosity model (e.g., the viscous matrices $\bm{K}_{ij}$ and viscous DG formulation) are determined, there are no parameters to tune for the artificial viscosity used in this work. Moreover, in addition to using a $P^1$ continuous finite element approximation instead of a high order DG discretization, the entropy inequality in \cite{guermond2018second, guermond2019invariant} is approximately localized around the support of a single $C^0$ nodal basis function using Lagrange interpolation, while we localize the entropy inequality exactly using the $L^2$ projection of the entropy variables over a single DG element. 

\section{Numerical experiments}
\label{sec:numerical} 

In this section, we present numerical experiments which confirm the robustness and accuracy of the proposed \gnote{entropy correction artificial viscosity}. All experiments are implemented in Julia using the \verb+StartUpDG.jl+ and \verb+Trixi.jl+ \cite{ranocha2022adaptive} libraries. Unless otherwise stated, all experiments utilize uniform meshes. In 1D, we investigate both nodal DGSEM formulations based on collocation at Legendre-Gauss-Lobatto nodes (referred to in figures as ``nodal'' DG discretizations), as well as degree $N$ modal DG formulations based on an $(N+2)$ point Gauss-Legendre quadrature rule (referred to in figures as ``modal'' DG discretizations). In 2D, we focus on total degree $N$ approximations on triangular meshes, and utilize volume quadratures from \cite{xiao2010numerical} which are exact for degree $2N$ polynomials, as well as $(N+1)$ point Gauss quadratures on faces. 

\rnote{Unless otherwise specified, we utilize the local Lax-Friedrichs flux with Davis wavespeed estimate \cite{davis1988simplified}. However, Appendix~\ref{sec:additional_1d} contains additional numerical experiments with the HLLC flux \cite{batten1997choice} and an entropy stable Roe-like flux \cite{winters2017uniquely, waruszewski2022entropy} as well.} For time integration, we utilize the \verb+OrdinaryDiffEq.jl+ library \cite{rackauckas2017differentialequations}. The adaptive 4-stage 3rd order strong stability preserving Runge-Kutta (SSPRK43) method \cite{kraaijevanger1991contractivity, fekete2022embedded, ranocha2022optimized} is used for all experiments. In 1D, the absolute and relative tolerance are set to $(10^{-8}, 10^{-6})$, and in 2D, the absolute and relative tolerances are $(10^{-6}, 10^{-4})$. 

\subsection{Compressible Euler equations}
All experiments are performed for the 1D and 2D compressible Euler equations, which are described below. 
Let $\bm{u}$ denote the vector of conservative variables, which in 2D are
\[
\bm{u} = \LRc{\rho, \rho u_1, \ldots, \rho u_d, E} \in \mathbb{R}^{d+2}.
\]
Here, $\rho$ is density, $u_i$ is the velocity in the $i$th coordinate direction, and $E$ is the specific total energy. The pressure $p$ is related to the conservative variables through the constitutive relations
\[
p = (\gamma-1) \rho e, \qquad E = {e+\frac{1}{2} \sum_{i=1}^d u_i^2},
\]
where $\gamma = 1.4$ and $e$ is the internal energy density. The compressible Euler equations in $d$ dimensions are given by 
\begin{equation}
\pd{\bm{u}}{t} + \sum_{i=1}^d \pd{\bm{f}_i}{x_i}  = \bm{0},
\end{equation}
where $\bm{f}_i$ denote the convective fluxes along the $i$th coordinate direction. For $d=2$, the inviscid fluxes $\bm{f}_i$ are given by
\[
\bm{f}_1 = \begin{bmatrix}
\rho u_1\\
\rho u_1^2 + p\\
\rho u_1u_2 \\
u_1 (E + p)
\end{bmatrix}, 
\qquad
\bm{f}_2 = \begin{bmatrix}
\rho u_2\\
\rho u_1u_2\\
\rho u_2^2 + p\\
u_2 (E + p)
\end{bmatrix}.
\]
Setting velocity in either direction to zero recovers the 1D compressible Euler equations. 

There exist an infinite family of convex entropies for the compressible Euler equations \cite{harten1983symmetric}; however, the compressible Navier-Stokes equations admit a mathematical entropy inequality with respect to only a single entropy function $S(\bm{u})$ and entropy potential $\psi_m(\bm{u})$
\[
S(\bm{u}) = -\rho s, \qquad \psi_m(\bm{u}) = \rho u_m,
\]
where $s = \log\LRp{\frac{p}{\rho^\gamma}}$ denotes the physical entropy \cite{hughes1986new}. 
The derivative of the entropy with respect to the conservative variables yield the entropy variables $\bm{v}(\bm{u}) = \pd{S}{\bm{u}} = \LRc{v_1, v_2, v_3, v_4}$, where
\begin{equation}
v_1 = \frac{\rho e (\gamma + 1 - s) - E}{\rho e}, \qquad v_{1+ i}= \frac{\rho {{u}_i}}{\rho e}, \qquad v_{d+2} = -\frac{\rho}{\rho e} \label{eq:evars}
\end{equation}
for $i = 1,\ldots, d$.  The inverse mapping is given by
\begin{align*}
\rho = -(\rho e) v_{d+2}, \qquad 
\rho {u_i} = (\rho e) v_{1+i}, \qquad 
 E = (\rho e)\LRp{1 - \frac{\sum_{j=1}^d{v_{1+j}^2}}{2 v_{d+2}}},
\end{align*}
where $i = 1,\ldots,d$, and $\rho e$ and $s$ in terms of the entropy variables are 
\begin{equation*}
\rho e = \LRp{\frac{(\gamma-1)}{\LRp{-v_{d+2}}^{\gamma}}}^{1/(\gamma-1)}e^{\frac{-s}{\gamma-1}}, \qquad 
s = \gamma - v_1 + \frac{\sum_{j=1}^d{v_{1+j}^2}}{2v_{d+2}}.
\end{equation*}
Finally, explicit expressions for the Jacobian matrix $\pd{\bm{u}}{\bm{v}}$ are given in \cite{barth1999numerical} in terms of the sound speed $a$ and specific total enthalpy $H$. In 2D, the Jacobian matrix is
\begin{gather*}
\pd{\bm{u}}{\bm{v}} = \begin{bmatrix}
\rho 	& \rho u_1 		& \rho u_2 		& E\\
	& \rho u_1^2 + p	& \rho u_1 u_2 		& u_1 (E + p)\\
	&			  	& \rho u_2^2 + p	& u_2 (E + p)\\
	& 				&				& \rho H^2 - a^2 \frac{p}{\gamma - 1}
\end{bmatrix},\\
a = \sqrt{\gamma\frac{p}{\rho}}, \qquad H = \frac{a^2}{\gamma - 1} + \frac{1}{2} (u_1^2 + u_2^2),
\end{gather*}
where the lower triangular entries of $\pd{\bm{u}}{\bm{v}}$ are determined by symmetry.

\subsection{Density wave and high order accuracy}

We begin by examining the accuracy of the proposed discretization. We first consider the 2D density wave with amplitude $|A| < 1$:
\begin{gather*}
    \rho = 1 + A \sin(2\pi (x + y)), \quad  u = .1, \quad v = .2, \quad    p = 10.
\end{gather*}
We compute $L^2$ errors at final time $T= 1.7$ for amplitude $A = 0.5$ and show rates of convergence in Table~\ref{tab:density_wave_2d}. Optimal rates of convergence are observed for polynomial degrees $N=1,\ldots, 4$.
\begin{table}
\centering
\begin{tabular}{|c|cc|cc|cc|cc|cc|}
\hline
$h$	& $N = 1$ & Rate & $N = 2$ & Rate & $N = 3$ & Rate & $N = 4$ & Rate \\
\hline
$1/2$ & ${5.735\times 10^{-1}}$ & ${}$ & ${4.628\times 10^{-1}}$ & ${}$ & ${4.058\times 10^{-1}}$ & ${}$ & ${1.321\times 10^{-1}}$ & ${}$\\
$1/4$ & ${2.626\times 10^{-1}}$ & ${1.13}$ & ${8.553\times 10^{-2}}$ & ${2.44}$ & ${4.349\times 10^{-2}}$ & ${3.22}$ & ${1.297\times 10^{-2}}$ & ${3.35}$\\
$1/8$ & ${7.913\times 10^{-2}}$ & ${1.73}$ & ${1.713\times 10^{-2}}$ & ${2.32}$ & ${1.907\times 10^{-3}}$ & ${4.51}$ & ${3.425\times 10^{-4}}$ & ${5.24}$\\
$1/16$ & ${1.739\times 10^{-2}}$ & ${2.19}$ & ${3.035\times 10^{-3}}$ & ${2.50}$ & ${8.373\times 10^{-5}}$ & ${4.51}$ & ${1.016\times 10^{-5}}$ & ${5.08}$\\
$1/32$ & ${3.898\times 10^{-3}}$ & ${2.16}$ & ${3.663\times 10^{-4}}$ & ${3.05}$ & ${4.214\times 10^{-6}}$ & ${4.31}$ & ${3.230\times 10^{-7}}$ & ${4.98}$\\
\hline
\end{tabular}
\caption{Computed $L^2$ errors for the 2D density wave with amplitude $A = 0.5$ using \gnote{entropy correction artificial viscosity}.}
\label{tab:density_wave_2d}
\end{table}

Next, we examine the behavior over time of the $L^2$ error over time for the 1D entropy wave with amplitude $|A| < 1$. 
\begin{gather}
    \rho = 1 + A \sin(2\pi x), \quad  u = .1, \quad  p = 10.
    \label{eq:density_wave_1d}
\end{gather}
We consider both $A=0.5$, where the minimum density is far from zero, and $A=0.98$, where the minimum density is closer to zero. We evaluate errors for the standard nodal DG formulation, nodal DG with \gnote{entropy correction artificial viscosity}, and a flux differencing entropy stable nodal DG method using Ranocha's entropy conservative volume flux \cite{ranocha2020entropy}. 

\begin{figure}
\centering
\subfloat[$L^2$ error]{\includegraphics[width=.45\textwidth]{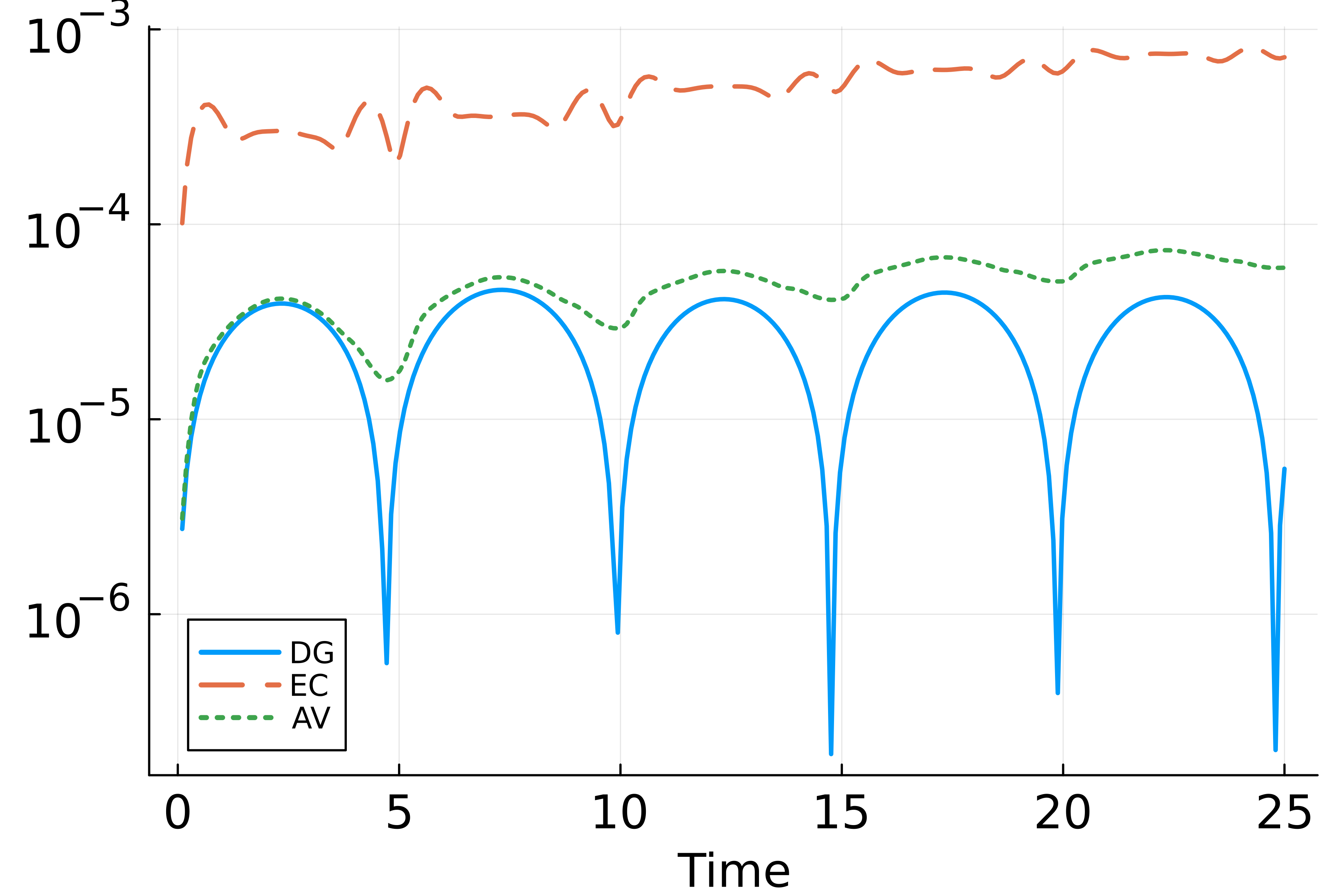}}
\hspace{.1em}
\subfloat[Change in entropy over time]{\includegraphics[width=.45\textwidth]{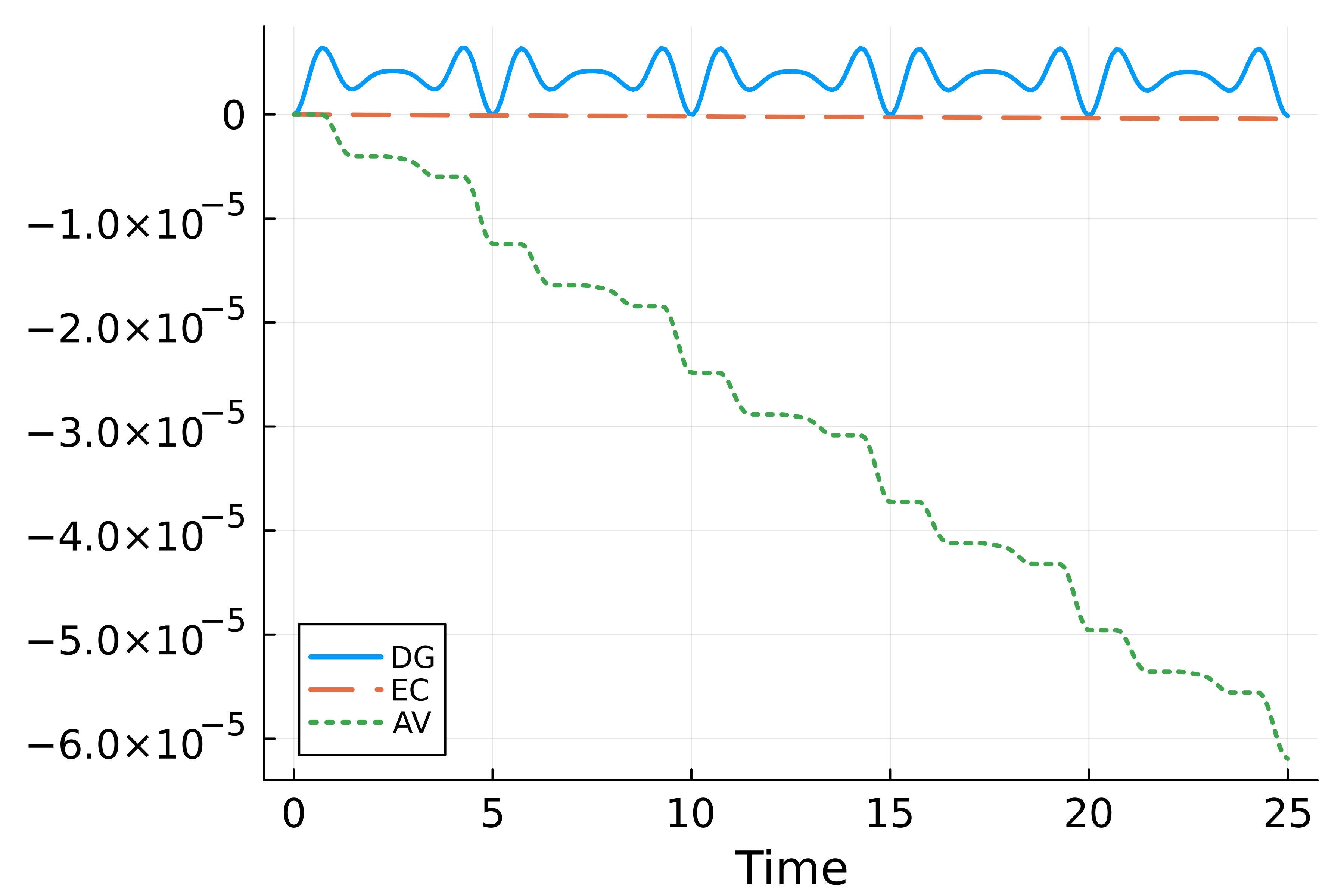}}
\caption{Evolution of $L^2$ error and entropy for the 1D density wave with amplitude $A=0.5$. Here, ``DG'' refers to the standard DG method, ``EC'' refers to a flux differencing entropy stable DG method, and ``AV'' refers to \gnote{DG with entropy correction artificial viscosity}.}
\label{fig:density_wave_error_over_time_1} 
\end{figure}

\begin{figure}
\centering
\subfloat[$L^2$ error]{\includegraphics[width=.45\textwidth]{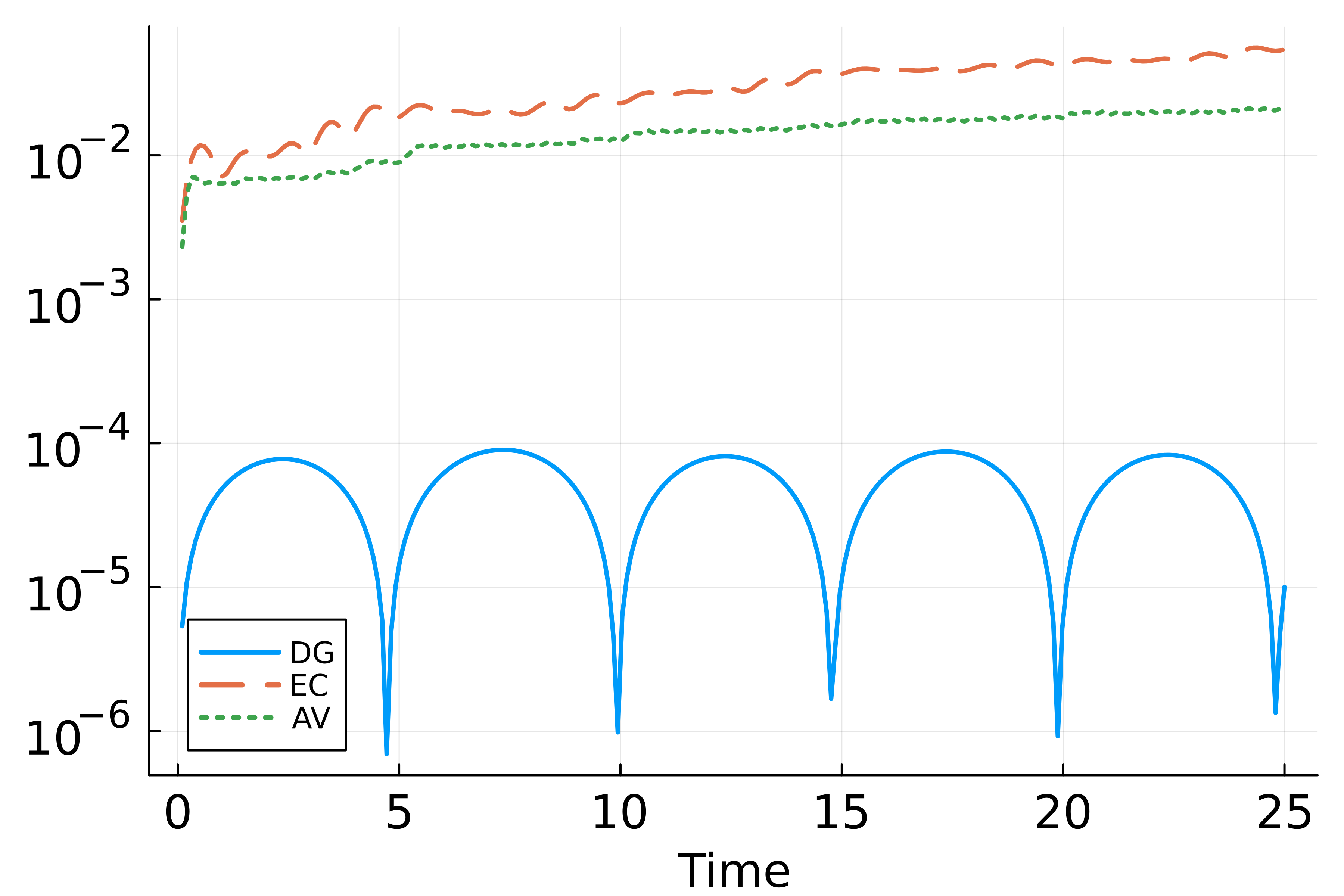}}
\hspace{.1em}
\subfloat[Change in entropy over time]{\includegraphics[width=.45\textwidth]{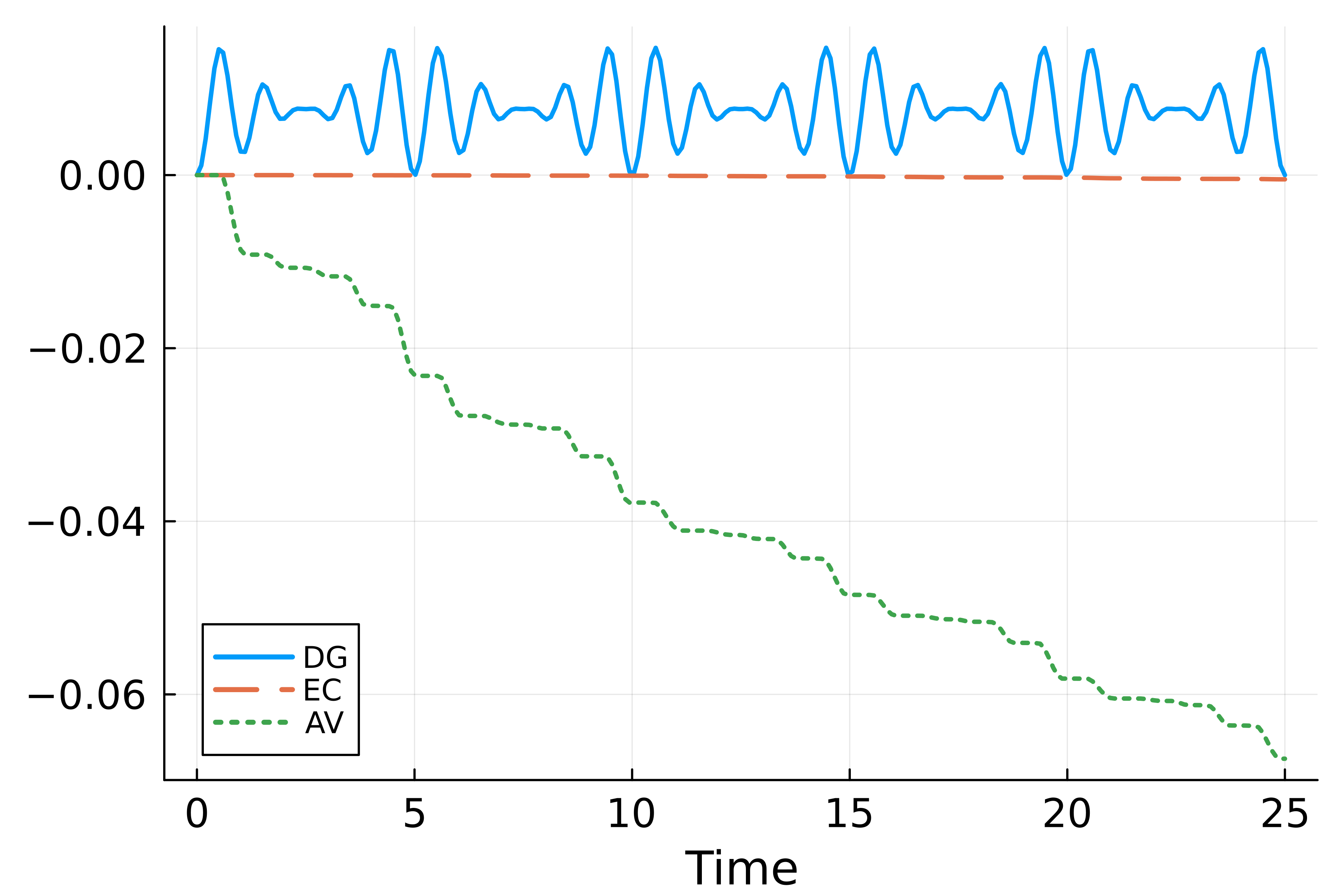}}
\caption{Evolution of $L^2$ error and entropy  for the 1D density wave with amplitude $A=0.98$.  Here, ``DG'' refers to the standard DG method, ``EC'' refers to a flux differencing entropy stable DG method, and ``AV'' refers to \gnote{DG with entropy correction artificial viscosity}.}
\label{fig:density_wave_error_over_time_2} 
\end{figure}

Figures~\ref{fig:density_wave_error_over_time_1} and \ref{fig:density_wave_error_over_time_2} shows plots of the $L^2$ errors and evolution of entropy up to time $T=25$ for a degree $N=7$ approximation over $4$ elements. For $A=0.5$, we observe that the $L^2$ errors are lowest for the standard nodal DG method, but that the errors for nodal DG with \gnote{entropy correction artificial viscosity} hover just above the standard nodal DG error. In contrast, the errors for entropy stable flux differencing nodal DG methods are about an order of magnitude larger. For $A=0.98$, both flux differencing and \gnote{entropy correction artificial viscosity} nodal DG result in larger errors compared with standard nodal DG; however, \gnote{entropy correction artificial viscosity} results in smaller errors than the flux differencing entropy stable scheme. We also observe in both Figures~\ref{fig:density_wave_error_over_time_1} and \ref{fig:density_wave_error_over_time_2} that the standard nodal DG scheme does not result in an entropy which is non-increasing in time. 

Finally, we note that we observe similar patterns in the evolution of error and entropy over time when repeating the same 1D density wave experiment with $A=0.5$ for a standard modal overintegrated DG scheme with $(N+2)$ Gauss quadrature points. For $A=0.98$, we observe that errors for the modal entropy stable schemes (both using flux differencing and \gnote{entropy correction artificial viscosity}) were larger in magnitude and resulted in a smaller maximum stable time-step size. This may be due to the sensitivity of the entropy projection for near-vacuum states \cite{chan2022entropyprojection}.

\subsection{\gnote{Local} linear stability for a smooth background flow} 

Flux differencing split form and entropy stable nodal DG methods are known to be less \gnote{locally} linearly stable than standard weak form nodal DG methods \cite{gassner2022stability, ranocha2021preventing}. In \cite{gassner2022stability}, the authors show that flux differencing with entropy conservative volume fluxes can be interpreted as adding potentially anti-diffusive correction to a \gnote{locally} linearly stable central scheme. It is known that a central volume flux recovers a standard nodal DG weak formulation \cite{gassner2016split}, and since the approach taken in this work adds a small dissipative correction to a standard weak DG formulation, we expect the \gnote{entropy correction artificial viscosity} approach in this paper to be more \gnote{locally} linearly stable than a split formulation.

\begin{figure}
\centering
\subfloat[Flux diff.\ with EC volume flux]{\includegraphics[width=.32\textwidth]{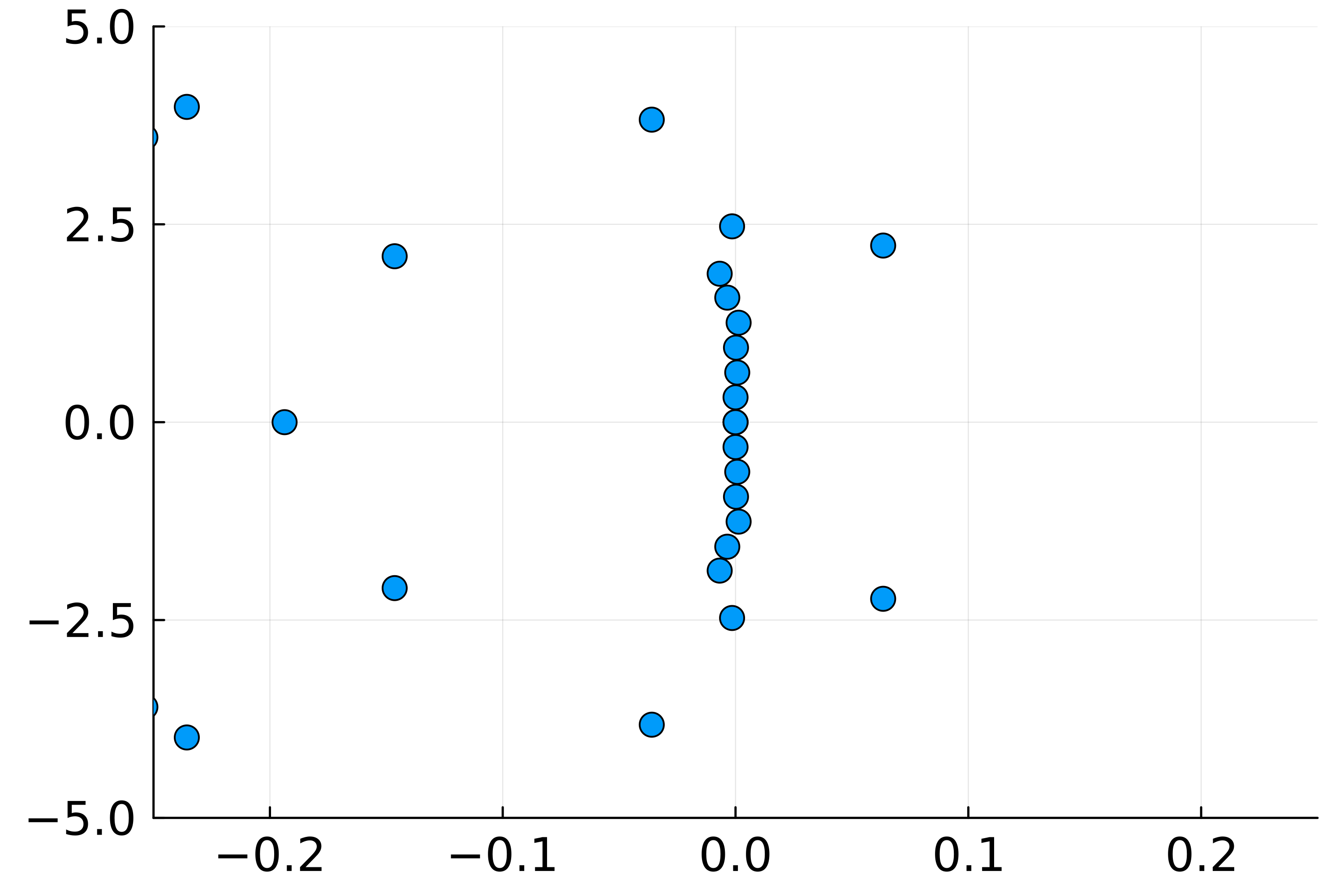}}
\hspace{.1em}
\subfloat[Entropy stable AV]{\includegraphics[width=.32\textwidth]{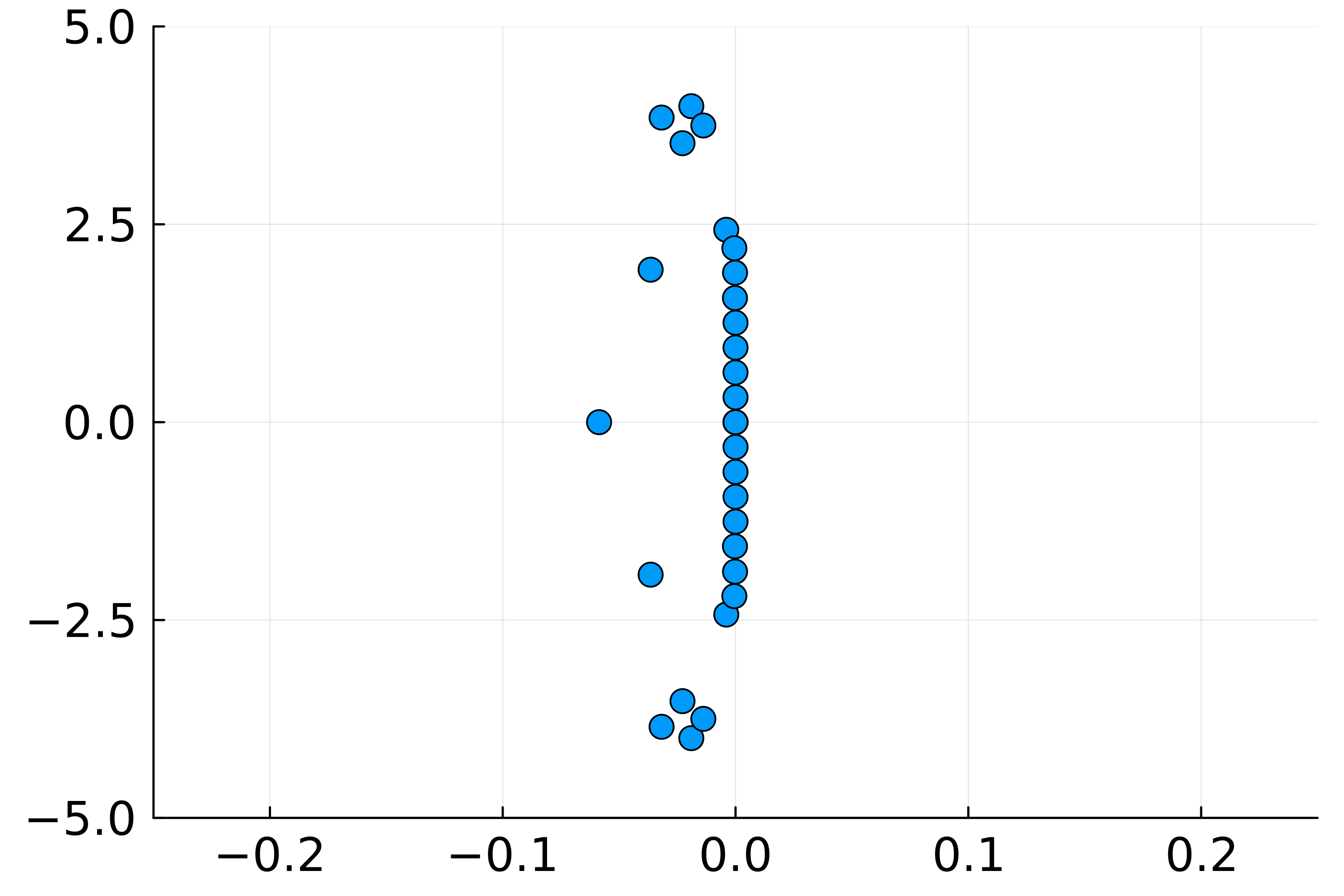}}
\hspace{.1em}
\subfloat[Standard DG]{\includegraphics[width=.32\textwidth]{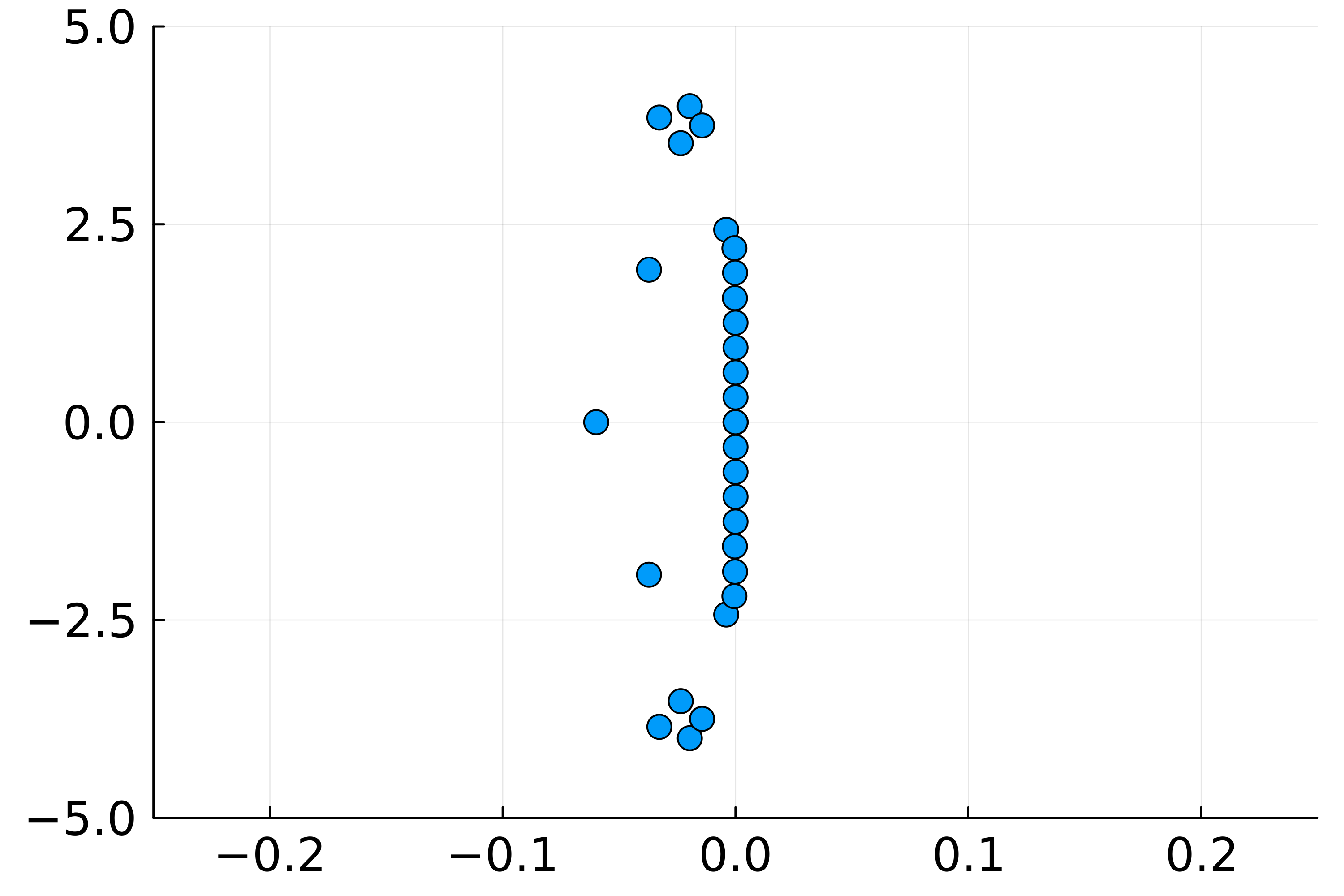}}
\caption{Spectra of the linearized Jacobian for \rnote{$A = 0.98$ with} a nodal DG discretization with $N=7, K=4$ and a local Lax-Friedrichs interface flux. The plots are zoomed near the origin to highlight the presence of eigenvalues with positive real parts. }
\label{fig:spectra}
\end{figure}

\begin{table}
\centering
\subfloat[Degree $N=3$ approximation]{
\begin{tabular}{|c|c|c|c|c|}
\hline
						& $K = 8$	 				& $K = 16$ 				& $K=32$ 			\\
\hline			
EC flux differencing		 	& $0.1349$				& $0.2560$				& $0.3883$				\\ 
\hline	
\gnote{Entropy correction AV}	& $0.0127$				& $2.2629 \times 10^{-13}$	& $2.7297\times 10^{-7}$		\\
\hline
Standard weak form DG 		& $6.9170 \times 10^{-8}$		& $6.7954 \times 10^{-10}$	& $2.2555\times 10^{-12}$	\\	
\hline
\end{tabular}
}
\\
\vspace{1em}
\subfloat[Degree $N=7$ approximation]{
\begin{tabular}{|c|c|c|c|c|}
\hline
						& $K=4$ 					& $K=8$					&$K = 16$	\\
\hline			
EC flux differencing		 	& $0.0634$				& $0.1163$				& $0.1971$	\\ 
\hline	
\gnote{Entropy correction AV}	& $5.132\times 10^{-7}$		& $0.0132$				& $0.0023$	\\
\hline
Standard weak form DG 		& $5.9776\times 10^{-14}$	& $1.1105\times 10^{-14}$	& $1.2291 \times 10^{-13}$	\\	
\hline
\end{tabular}
}
\caption{Maximum real parts of the linearized spectra for \rnote{$A = 0.98$ with} nodal DG discretizations with $K$ elements and a local Lax-Friedrichs interface flux. }
\label{tab:spectra}
\end{table}

We analyze the \gnote{local} linear stability of a nodal DGSEM formulation\footnote{We also experimented with a modal DG formulation using an $(N+2)$ point Gauss quadrature, but observed that the spectra included eigenvalues with large $O(100)$ positive real parts for the standard weak form DG scheme both with and without artificial viscosity. However, since we cannot differentiate between standard weak form DG and the \gnote{entropy correction artificial viscosity} based on the spectra, we do not consider those results here.} by computing the spectral of the linearized Jacobian using automatic differentiation \cite{revels2016forward}, where the background flow is given by the 1D density wave initial condition \eqref{eq:density_wave_1d} \rnote{with $A = 0.98$}. A local Lax-Friedrichs interface flux is used in all cases, and the entropy conservative flux of Ranocha \cite{ranocha2020entropy} is used for the volume flux within the entropy stable flux differencing nodal DG discretization. \gnote{Both the initial condition and the $\bm{K}_{ij}$ matrices in the entropy correction artificial viscosity are evaluated at Lobatto nodes. 

We emphasize that the computed spectra in Figure~\ref{fig:spectra} and Table~\ref{tab:spectra} for entropy correction artificial viscosity are sensitive to the choice of initial condition. We evaluated the linearized spectra using the solution after evolving the initial condition to final time $T = 2.0$ before computing the linearized operator, which we observed reduces the sensitivity of the maximum real part of the linearized spectra to some discretization choices (e.g., initializing the solution using interpolation instead of $L^2$ projection\footnote{For the results reported here, $L^2$ projection is used to initialize the solution.}). However, because the computed spectra results should be viewed somewhat skeptically due to their sensitivity, we present results for several different solution resolutions to make it easier to observe broader trends in the maximum real part of the spectra. We note that this sensitivity to initial condition is not due purely to the non-differentiability of the $\min$ function in \eqref{eq:eps}, as it persists even when replacing this with a differentiable ``smooth minimum''. 
}

Figure~\ref{fig:spectra} shows a zoom near the origin of the spectra for degree $N=7$ and a mesh of $K=4$ uniform elements. \gnote{Table~\ref{tab:spectra} reports the computed maximum real part of the spectra for degree $N=3$ and degree $N=7$ approximations at various mesh resolutions. We consistently observe that entropy correction artificial viscosity results in smaller maximum real parts than EC flux differencing (sometimes significantly smaller), though standard weak form DG typically results in the smallest maximum real part of the spectra.\footnote{The exception is the case of $N=3$ and $K=16$ elements, where entropy correction artificial viscosity results in a smaller maximum real part than standard weak form DG. However, for standard weak form DG, the maximum real part is $O(10^{-10})$ and still very small.}}\bnote{We hypothesize that the presence of positive maximum real parts of the spectra for entropy correction artificial viscosity may be due to the fact that the system is linearized with respect to the conservative variables, while the artificial viscosity dissipation is dissipative (i.e., symmetric and positive semi-definite) only with respect to projected entropy variables. 

Finally, we note that when reducing the amplitude of the density wave to $A = 0.5$, the maximum real parts of the linearized spectra for both entropy correction artificial viscosity and standard weak form DG are between $O(10^{-7})$ and $O(10^{-15})$, while for EC flux differencing the maximum real parts are between $O(10^{-2})$ and $O(10^{-3})$.}


\subsection{Convergence of the artificial viscosity coefficients}

Next, we examine the magnitude of the \gnote{entropy correction artificial viscosity} coefficient $\epsilon_k(\bm{u}_h)$ for both smooth and discontinuous solution profiles. We compute artificial viscosity coefficients for the following smooth solution field
\[
(\rho, u_1, u_2, p) =  \LRp{1 + \frac{1}{2} \sin(0.1 + \pi x) \sin(0.2 + \pi y), \frac{1}{2}\sin(0.2 + \pi x) \sin(0.1 + \pi y), 0, \rho^\gamma}
\]
and the following discontinuous solution field
\[
(\rho, u_1, u_2, p) = \begin{cases}
     (1, 0, 0, 1) & \LRb{0.3 x + y} < 0.5\\
     (2, .1, .2, 2^\gamma) & \text{otherwise}.
\end{cases}
\]

To compute the \gnote{entropy correction artificial viscosity} coefficient, we compute the $L^2$ projection onto degree $N$ polynomials of the conservative variables given by these solution fields, then determine the artificial viscosity coefficient $\epsilon_k(\bm{u}_h)$ using \eqref{eq:eps}. The quadrature is chosen based on the discussion in Section~\ref{sec:quadrature} such that the quadrature errors are of the same order of accuracy as the volume entropy residual. Specifically, we use a volume quadrature which is exact for degree $(2N+1)$ polynomials, and the surface quadrature is taken to be an $(N+2)$ point Gauss quadrature, which is sufficient to exactly integrate degree $(2N+2)$ polynomials. 

\begin{figure}
\centering
\subfloat[Smooth solution]{\includegraphics[width=.47\textwidth]{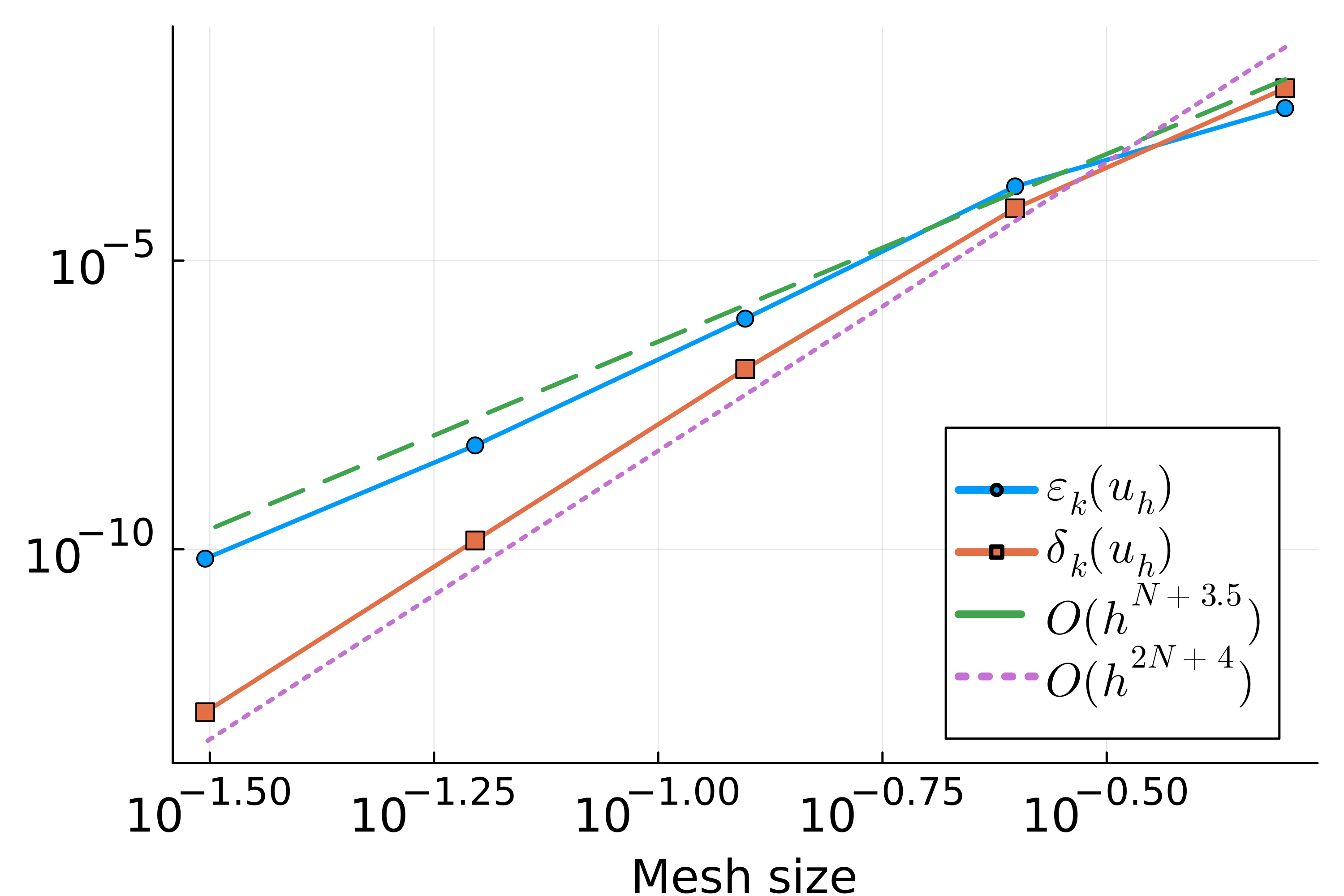}}
\hspace{.1em}
\subfloat[Discontinuous solution]{\includegraphics[width=.47\textwidth]{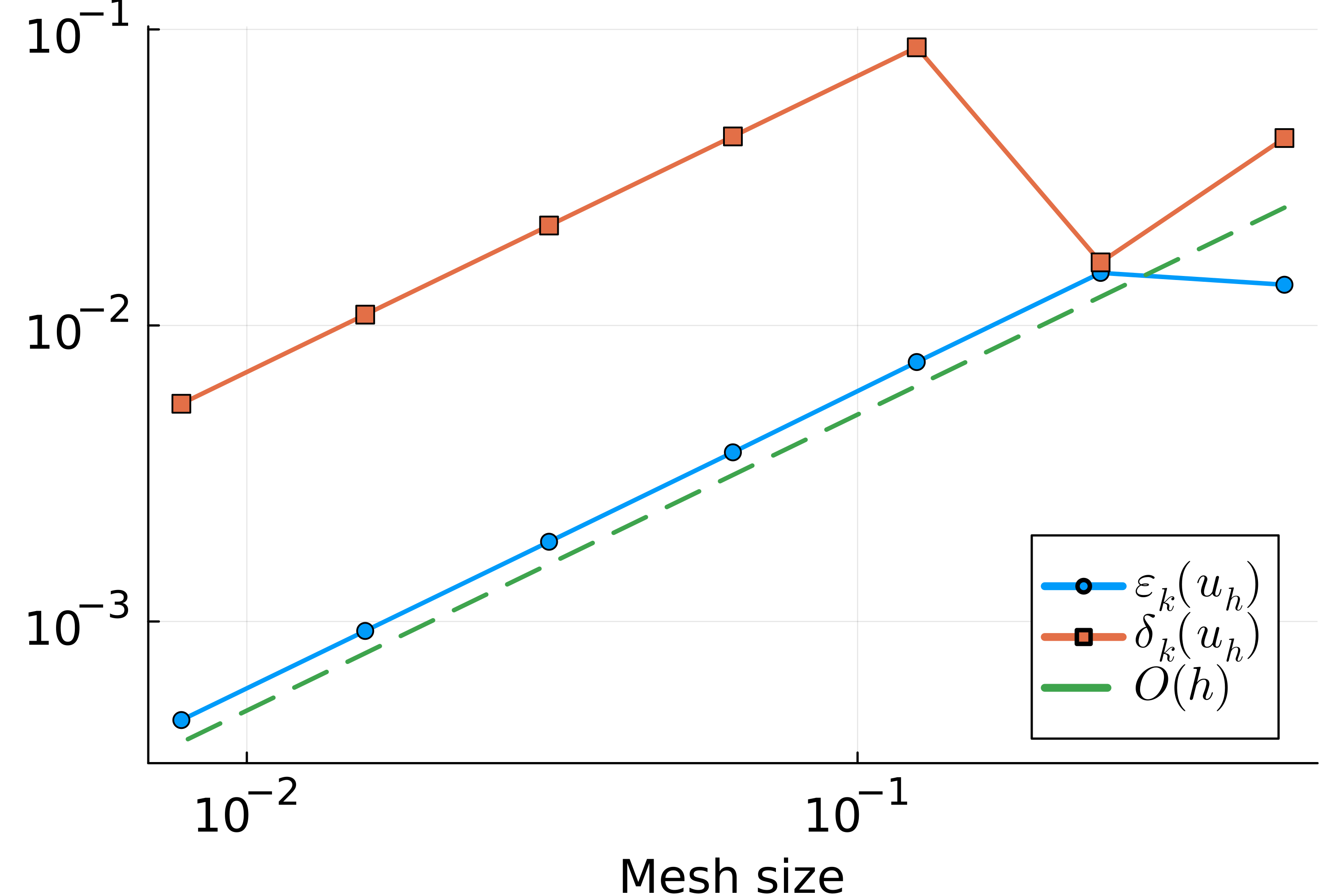}}
\caption{Convergence of the volume entropy residual $\delta_k(\bm{u}_h)$ \eqref{eq:entropy_ineq_error} and the \gnote{entropy correction artificial viscosity} coefficient $\epsilon_k(\bm{u}_h)$ \eqref{eq:eps} for fixed 2D smooth and discontinuous solution fields with $N=3$.}
\label{fig:conv_AV_coeff}
\end{figure} 

Figure~\ref{fig:conv_AV_coeff} shows the maximum values of $\epsilon_k(\bm{u}_h)$ and $\delta_k(\bm{u}_h)$ (the volume entropy residual given by \eqref{eq:entropy_ineq_error}) over degree $N=3$ uniform triangular meshes. We observe that, for a smooth solution field, the volume entropy residual converges at the expected rate of $O(h^{2N+2+d})$, while the \gnote{entropy correction artificial viscosity} coefficient converges at a rate of $O(h^{N+1.5 + d})$. For a discontinuous solution field, we observe $O(h)$ convergence for both quantities once the mesh is sufficiently refined. 

We note that, in practice, we use quadratures with lower degrees of exactness. We observe that, when taking the volume and surface quadratures to be exact for degree $2N$ and $2N+1$ polynomials respectively, the volume entropy residual and \gnote{entropy correction artificial viscosity} coefficient appear \bnote{to} achieve rates of $O(h^{2N+1+d})$ and $O(h^{N+0.5+d})$ for smooth solution fields, losing one order of convergence. 


\subsection{Modified Sod shock tube}

Next, we consider the modified Sod shock tube problem \cite{toro2013riemann} on the domain on $[0,1]$
\begin{equation}
(\rho, u, p) = \begin{cases}
(1, .75, 1) & x < 0.3\\
(.125, 0, .1) & \text{otherwise}.
\end{cases}
\label{eq:modSod}
\end{equation}

\begin{figure}
\centering
\subfloat[Nodal DG with AV]{\includegraphics[width=.45\textwidth, trim={9em 8em 9em 5em}, clip]{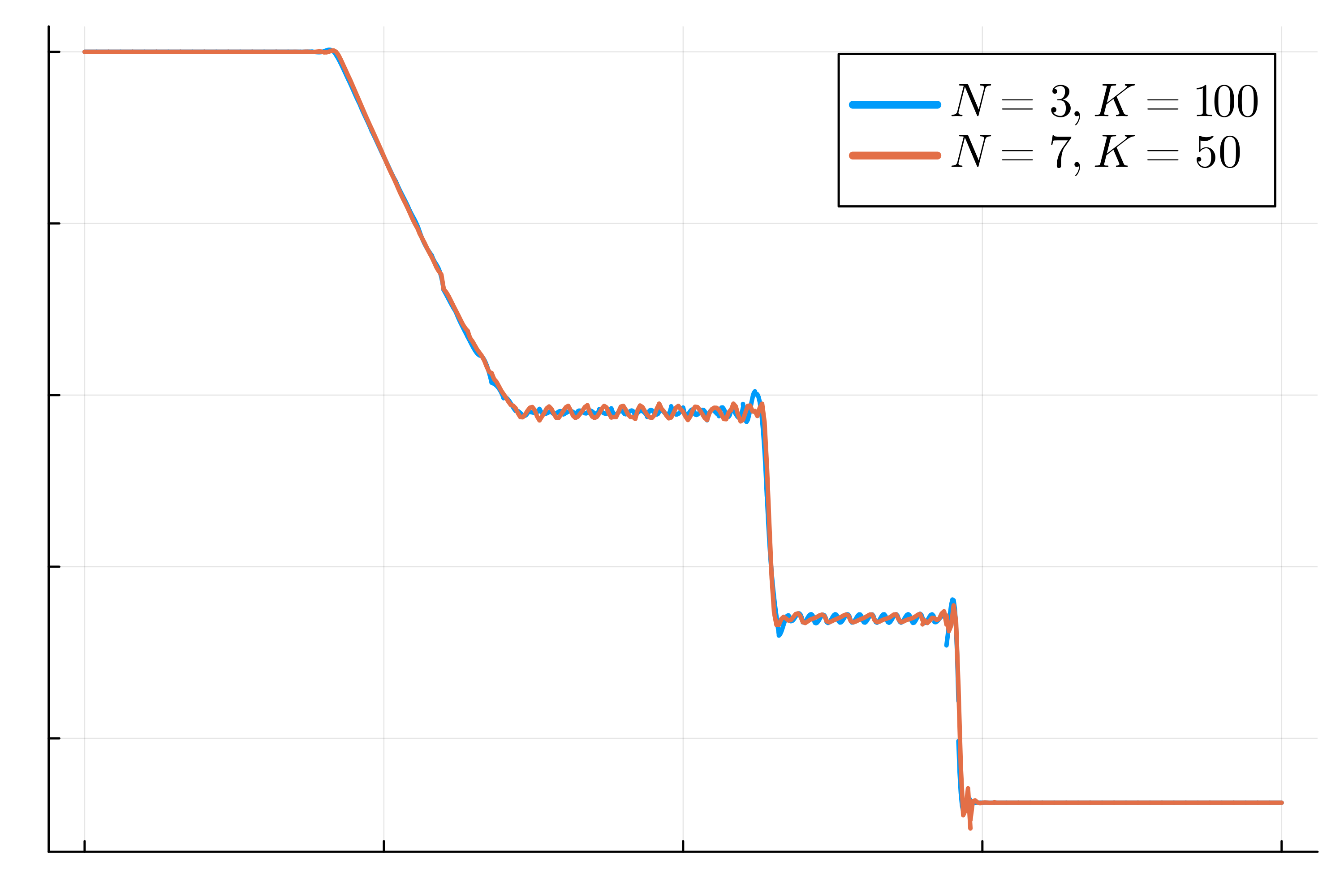}}
\hspace{.1em}
\subfloat[Nodal DG with flux differencing]{\includegraphics[width=.45\textwidth, trim={9em 8em 9em 5em}, clip]{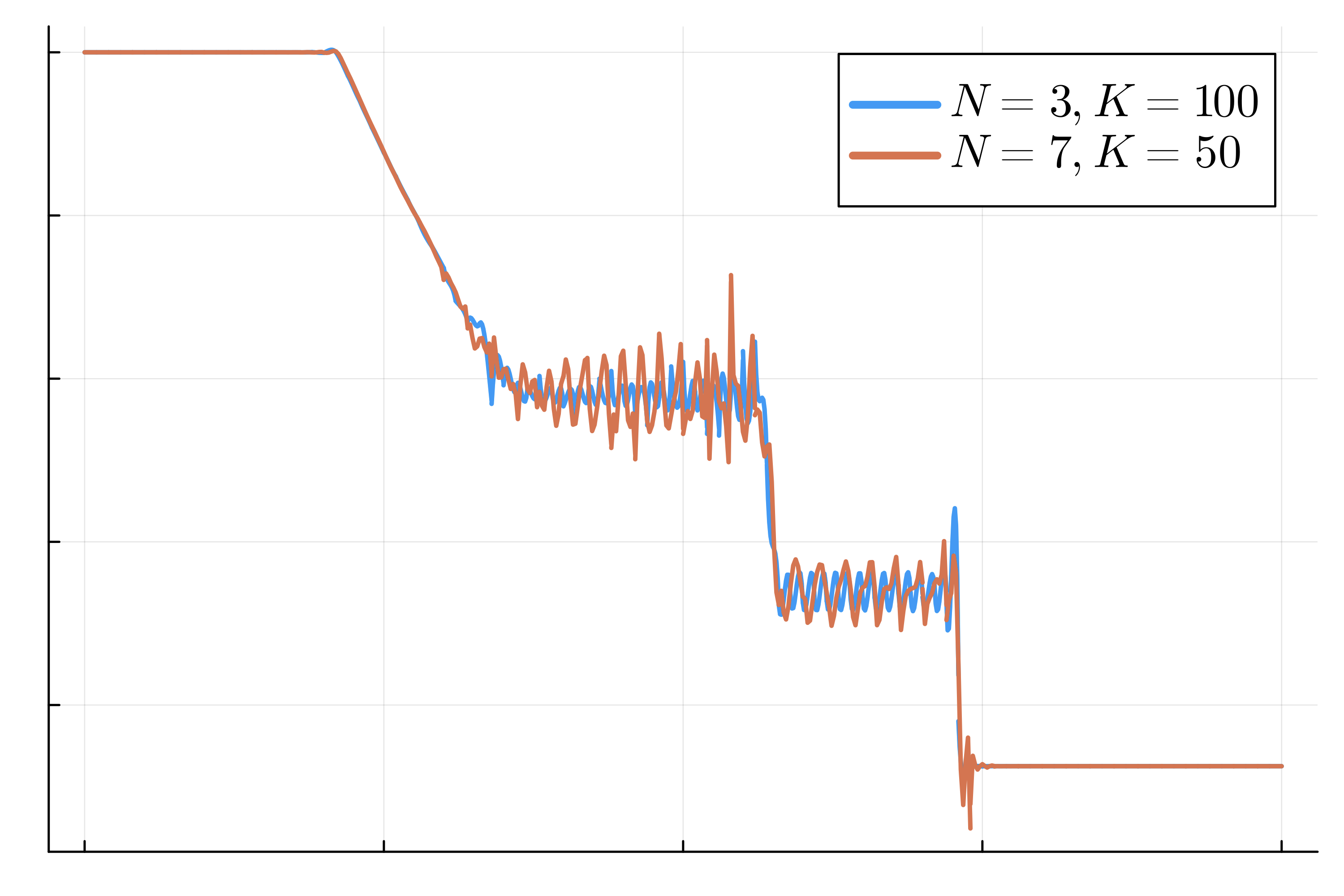}}\\
\subfloat[Modal DG with AV]{\includegraphics[width=.45\textwidth, trim={9em 8em 9em 5em}, clip]{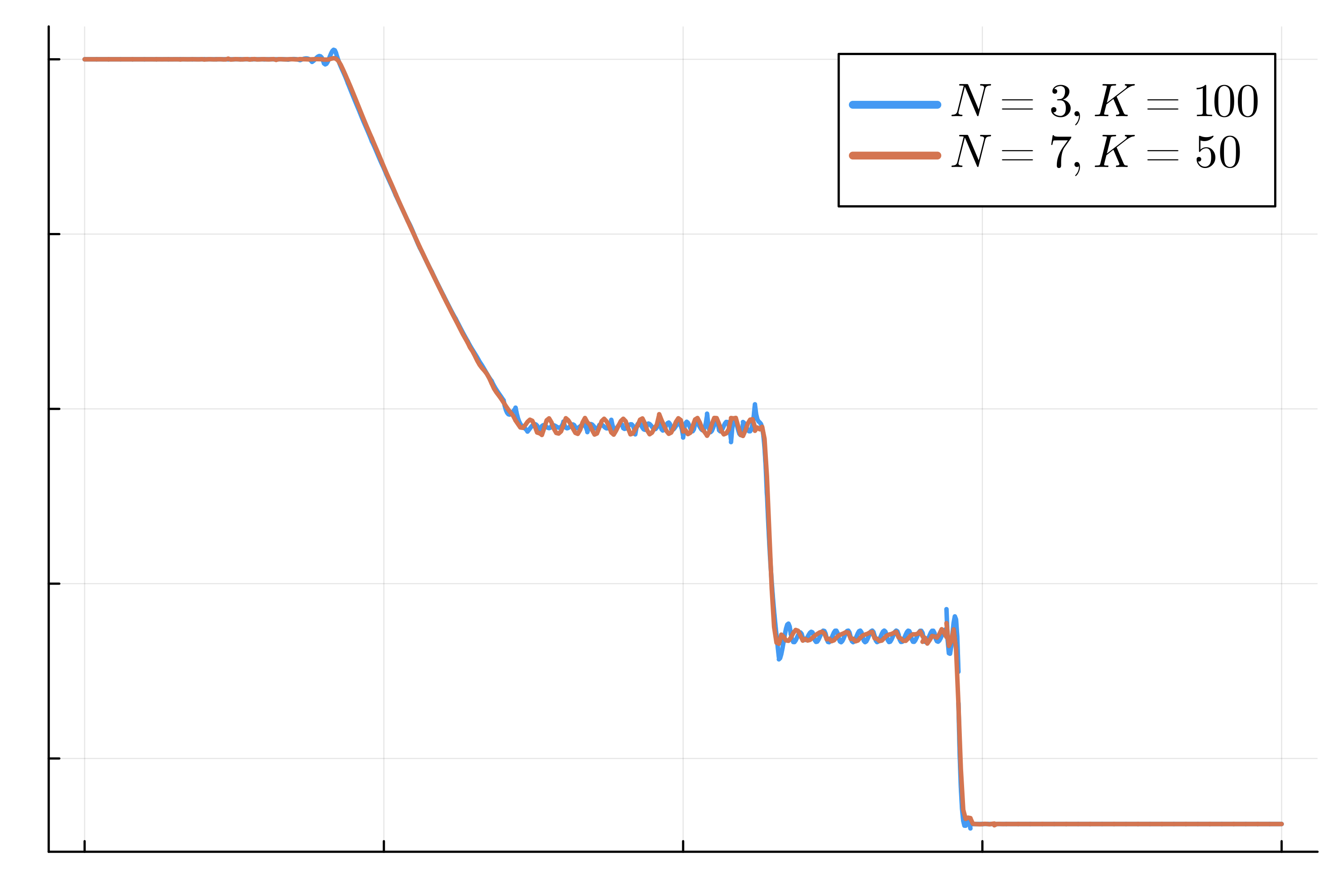}}
\hspace{.1em}
\subfloat[Modal DG with flux differencing]{\includegraphics[width=.45\textwidth, trim={9em 8em 9em 5em}, clip]{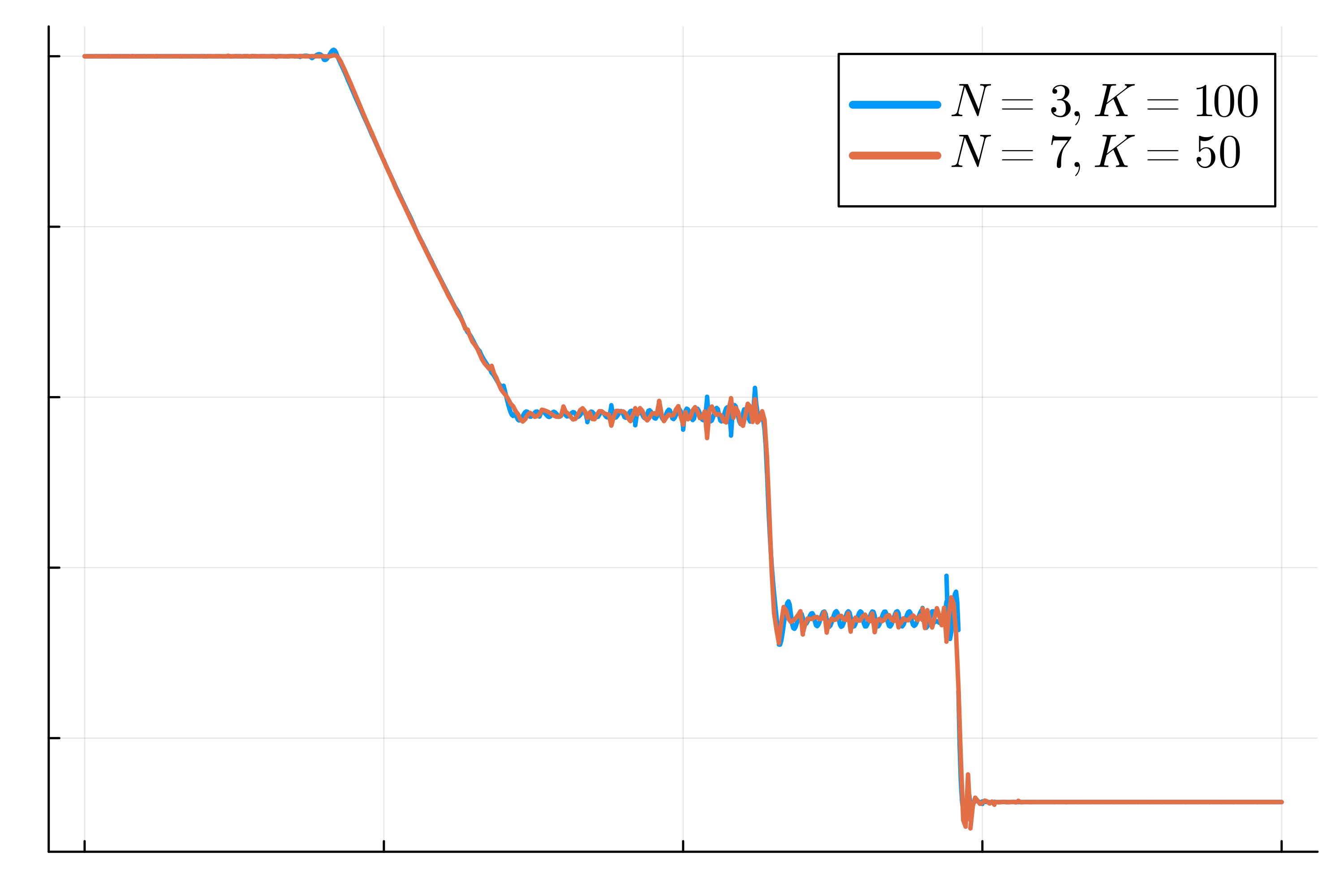}}
\caption{Comparison of \rnote{density for} entropy stable DG methods constructed using flux differencing and entropy correction artificial viscosity. } %
\label{fig:modSod}
\end{figure}

Figure~\ref{fig:modSod} shows computed solutions at time $T = 0.2$ using both nodal \cite{chen2017entropy} and modal \cite{chan2018discretely, chan2019skew} flux differencing entropy stable DG methods, as well as both nodal and modal DG methods with entropy stability enforced via artificial viscosity. We note that the nodal and modal DG \gnote{with entropy correction artificial viscosity} are similar except for the presence of a small solution artifact near the sonic point for the nodal DG method. This solution artifact decreases in magnitude under either mesh or degree refinement. 

We observe that, while all methods produce spurious oscillations due to under-resolution of the shock, the oscillations present for DG with \gnote{entropy correction artificial viscosity} are significantly smaller compared with the oscillations present for the flux differencing entropy stable nodal DG method. Moreover, the oscillations for the flux differencing entropy stable nodal DG methods increase in magnitude as the degree $N$ increases, while the oscillations for flux differencing modal entropy stable DG method and DG with \gnote{entropy correction artificial viscosity} remain about the same magnitude for both degrees $N=3$ and $N=7$. Using a contact-preserving entropy stable flux such as HLLC \cite{batten1997choice} or a Roe-type matrix dissipation \cite{winters2017uniquely, waruszewski2022entropy} reduces but does not eliminate such oscillations. Additional numerical results using contact-preserving interface fluxes are provided in \ref{sec:hllc}. 

We note that there exist several additional techniques to suppress the spurious oscillations present in flux differencing entropy stable nodal DG methods. Earlier papers on high order entropy stable DG methods utilized a comparison principle, blending a standard DG method with an entropy stable DG method \cite{carpenter2014entropy}. Shock capturing \cite{hennemann2021provably}, bounds-preserving limiting \cite{lin2023positivity}, and heuristic artificial viscosity methods \cite{mateo2022entropy} have also been shown to be effective.

\subsection{Shu-Osher problem}

We consider the Shu-Osher sine-shock interaction problem \cite{shu2009high} next, which is posed on the domain $[-5, 5]$ with initial condition
\[
(\rho, u, p) = \begin{cases}
(3.857143, 2.629369, 10.3333) & x < -4\\
(1 + .2\sin(5x), 0, 1) & x \geq -4.
\end{cases} 
\]

\begin{figure}[h]
\centering
\subfloat[Nodal DG with flux differencing ]{\includegraphics[width=.45\textwidth, trim={9em 8em 9em 5em}, clip]{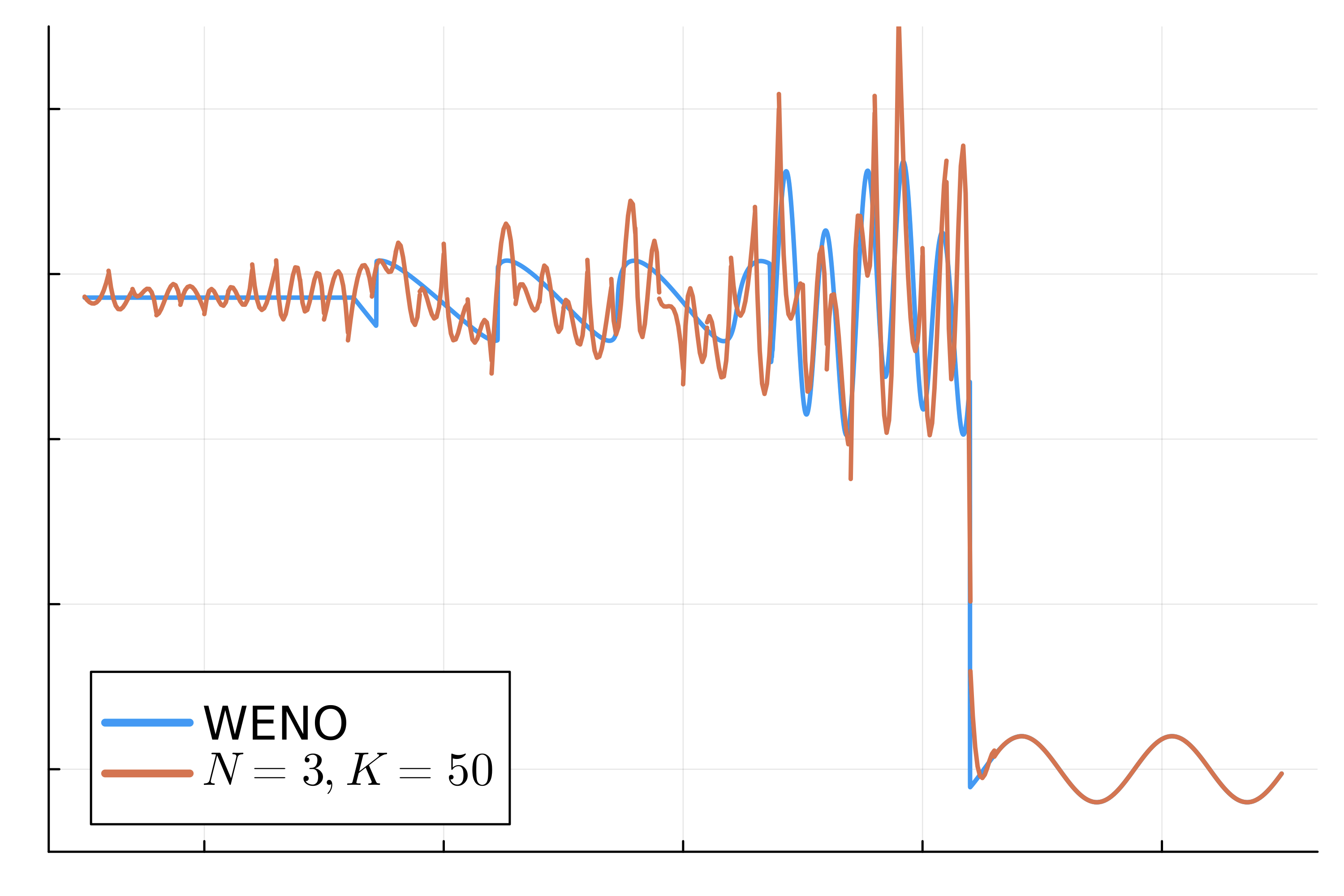}}
\hspace{.1em}
\subfloat[Nodal DG with AV]{\includegraphics[width=.45\textwidth, trim={9em 8em 9em 5em}, clip]{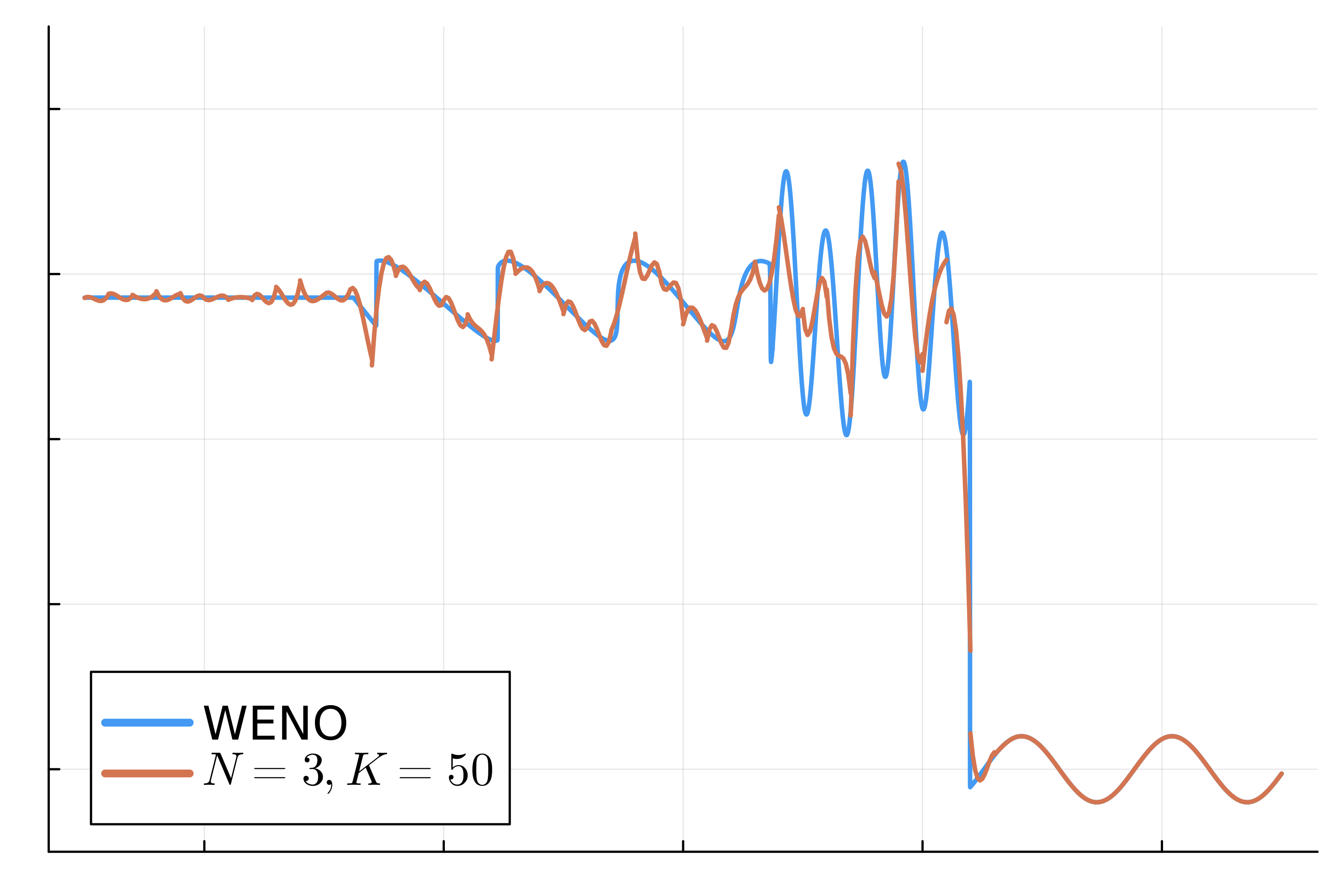}}\\
\subfloat[Modal DG with flux differencing ]{\includegraphics[width=.45\textwidth, trim={9em 8em 9em 5em}, clip]{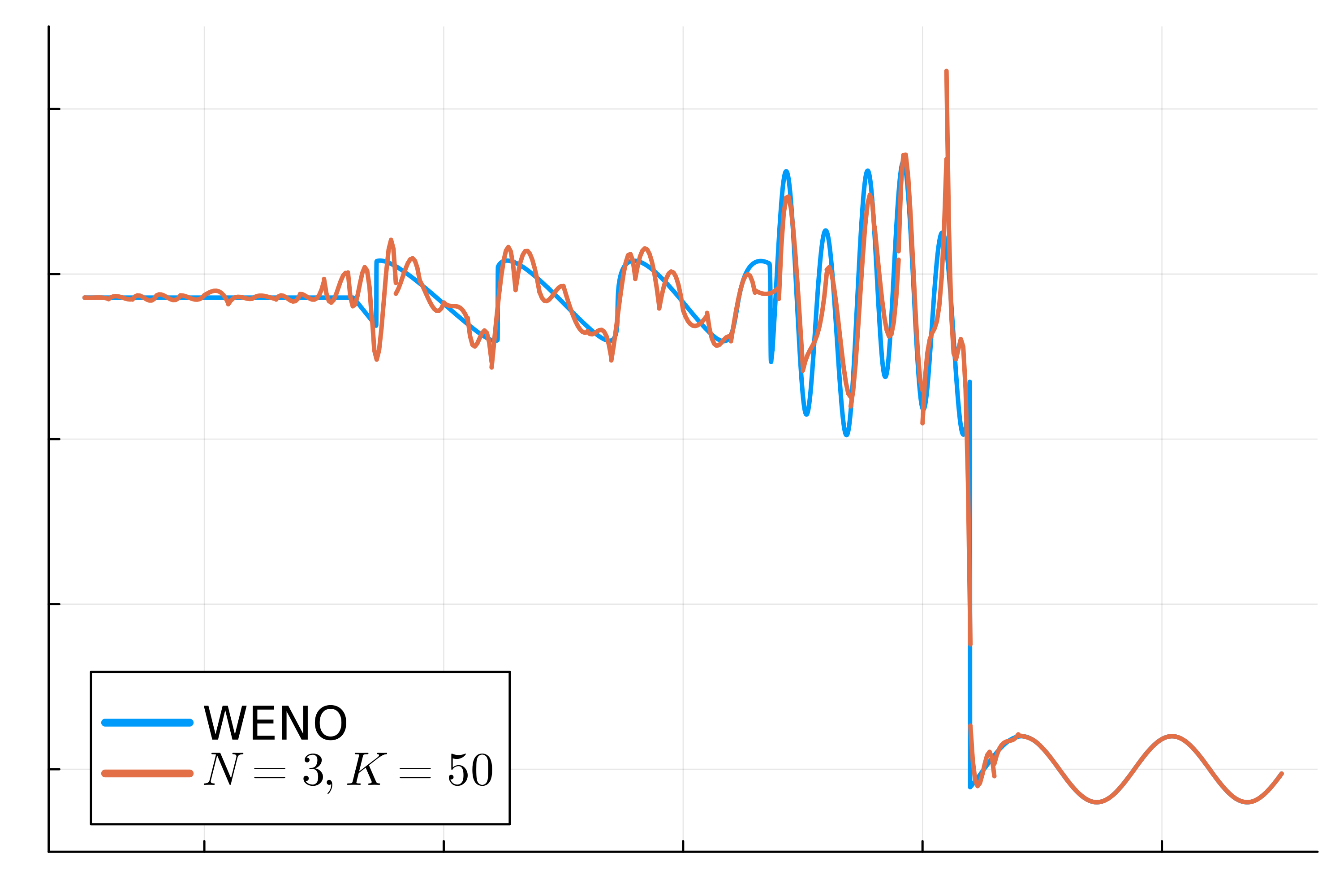}}
\hspace{.1em}
\subfloat[Modal DG with AV]{\includegraphics[width=.45\textwidth, trim={9em 8em 9em 5em}, clip]{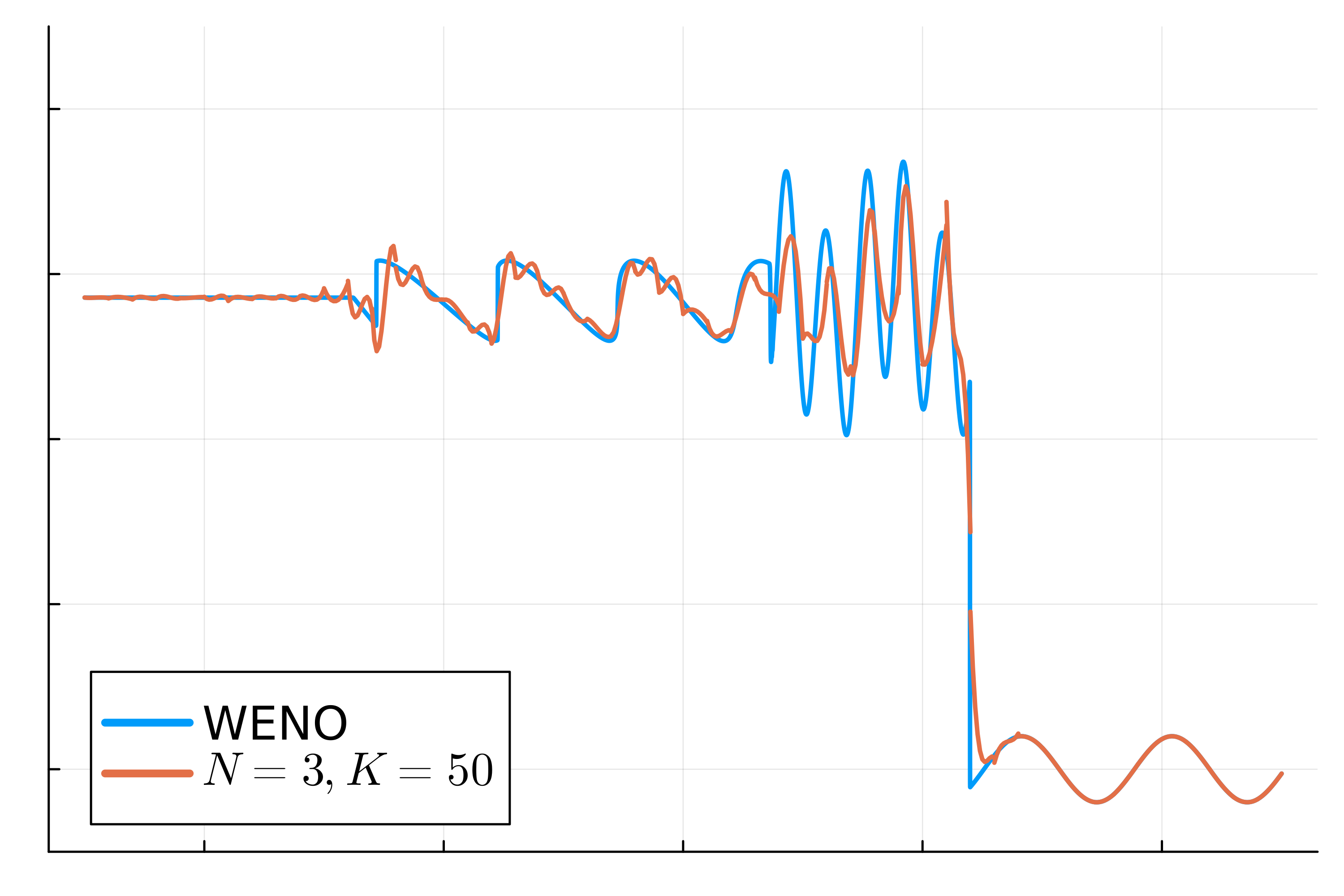}}
\caption{Entropy stable DG solutions \rnote{(density)} of the Shu-Osher problem with $N=3$, $50$ elements.}
\label{fig:shu_osher_N3_K50}
\end{figure}

\begin{figure}
\centering
\subfloat[Nodal DG with flux differencing ]{\includegraphics[width=.45\textwidth, trim={9em 8em 9em 5em}, clip]{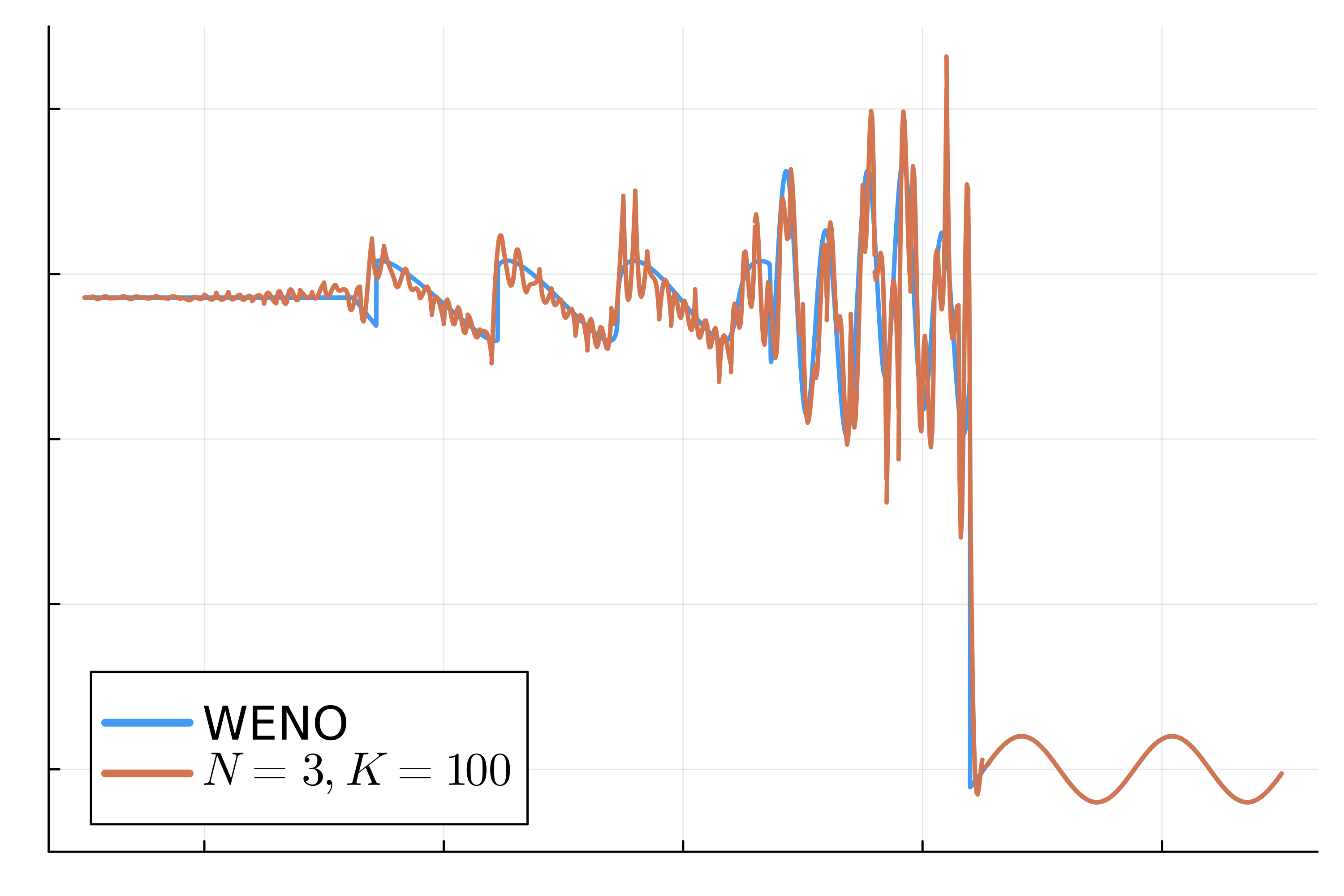}}
\hspace{.1em}
\subfloat[Nodal DG with AV]{\includegraphics[width=.45\textwidth, trim={9em 8em 9em 5em}, clip]{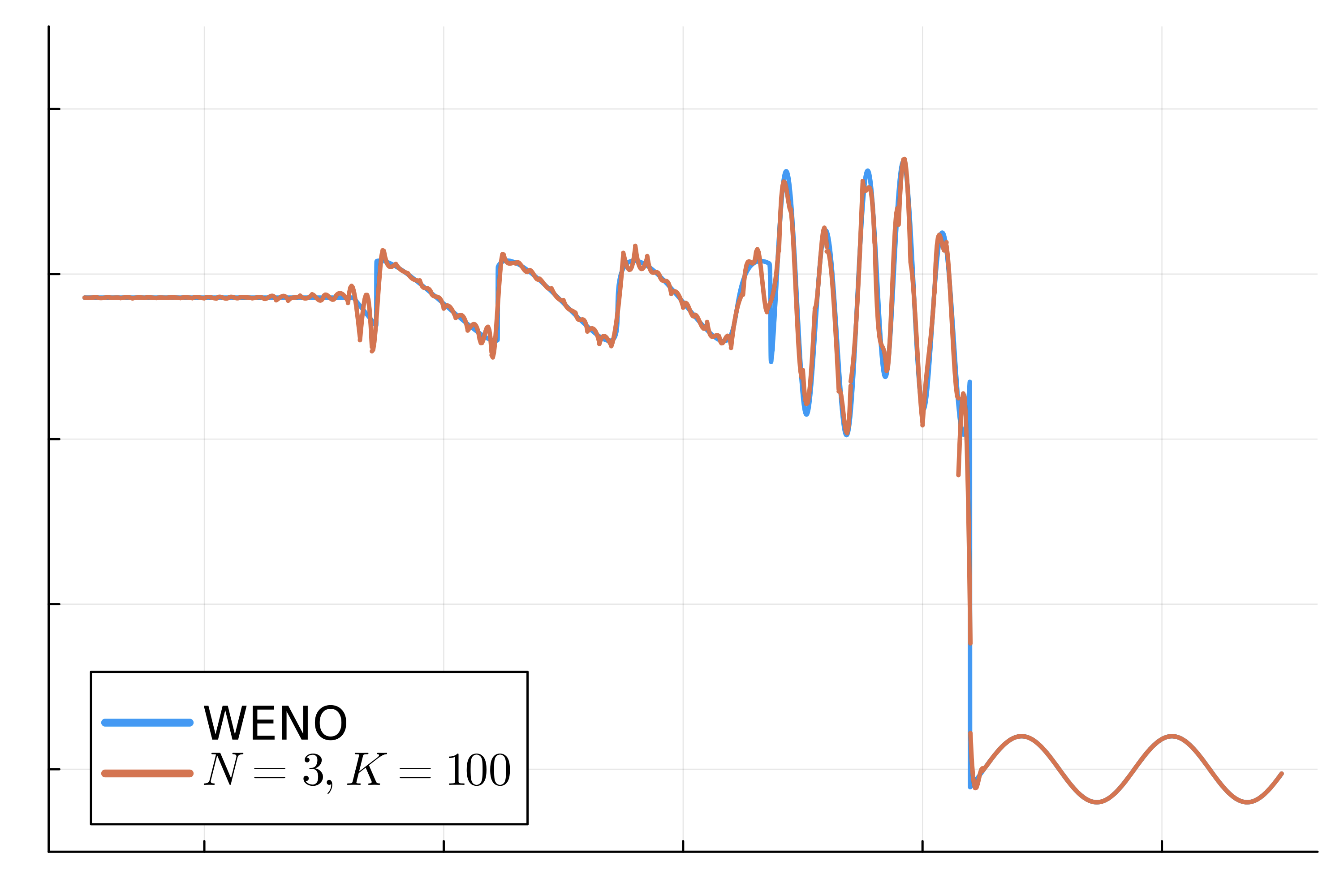}}\\
\subfloat[Modal DG with flux differencing ]{\includegraphics[width=.45\textwidth, trim={9em 8em 9em 5em}, clip]{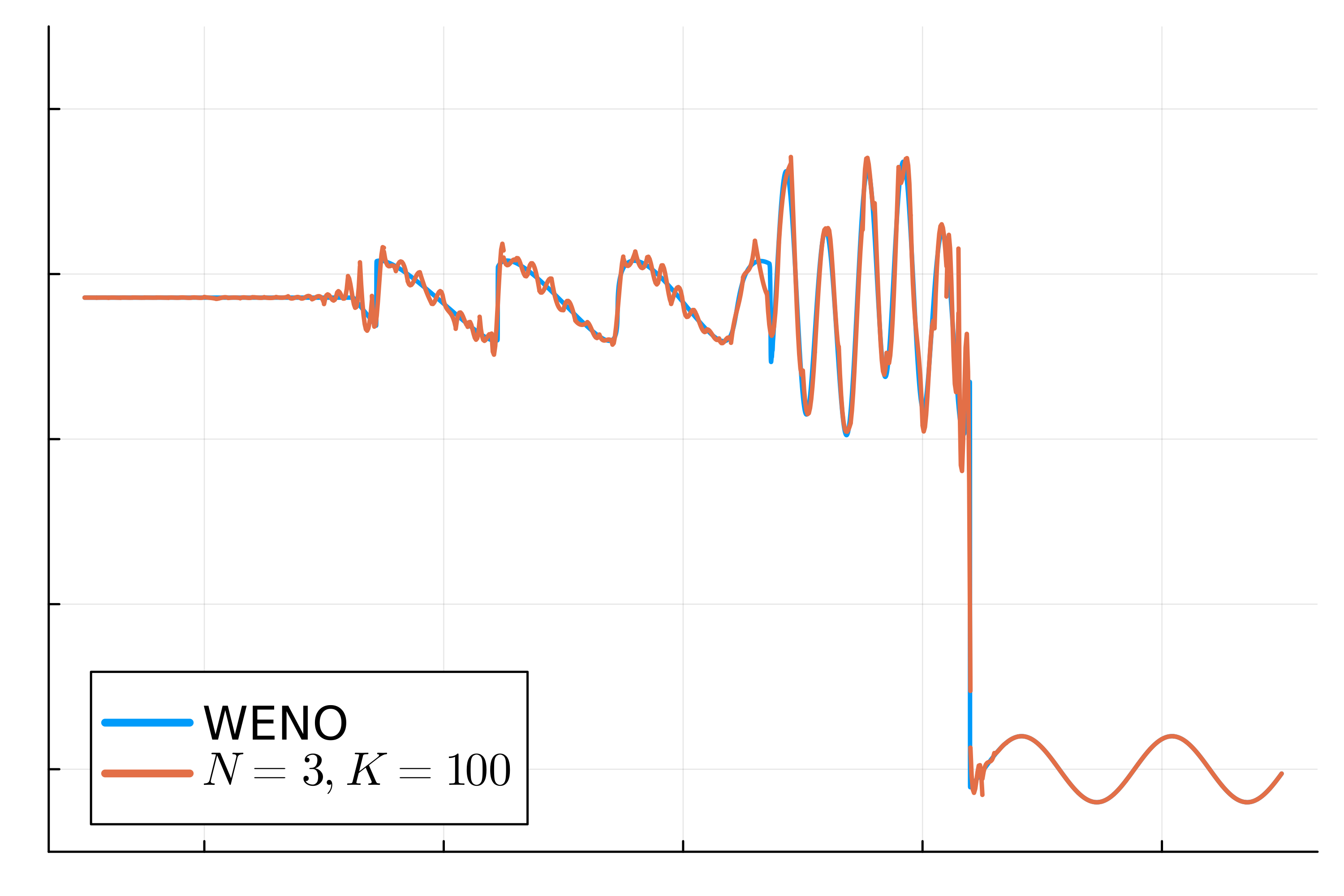}}
\hspace{.1em}
\subfloat[Modal DG with AV]{\includegraphics[width=.45\textwidth, trim={9em 8em 9em 5em}, clip]{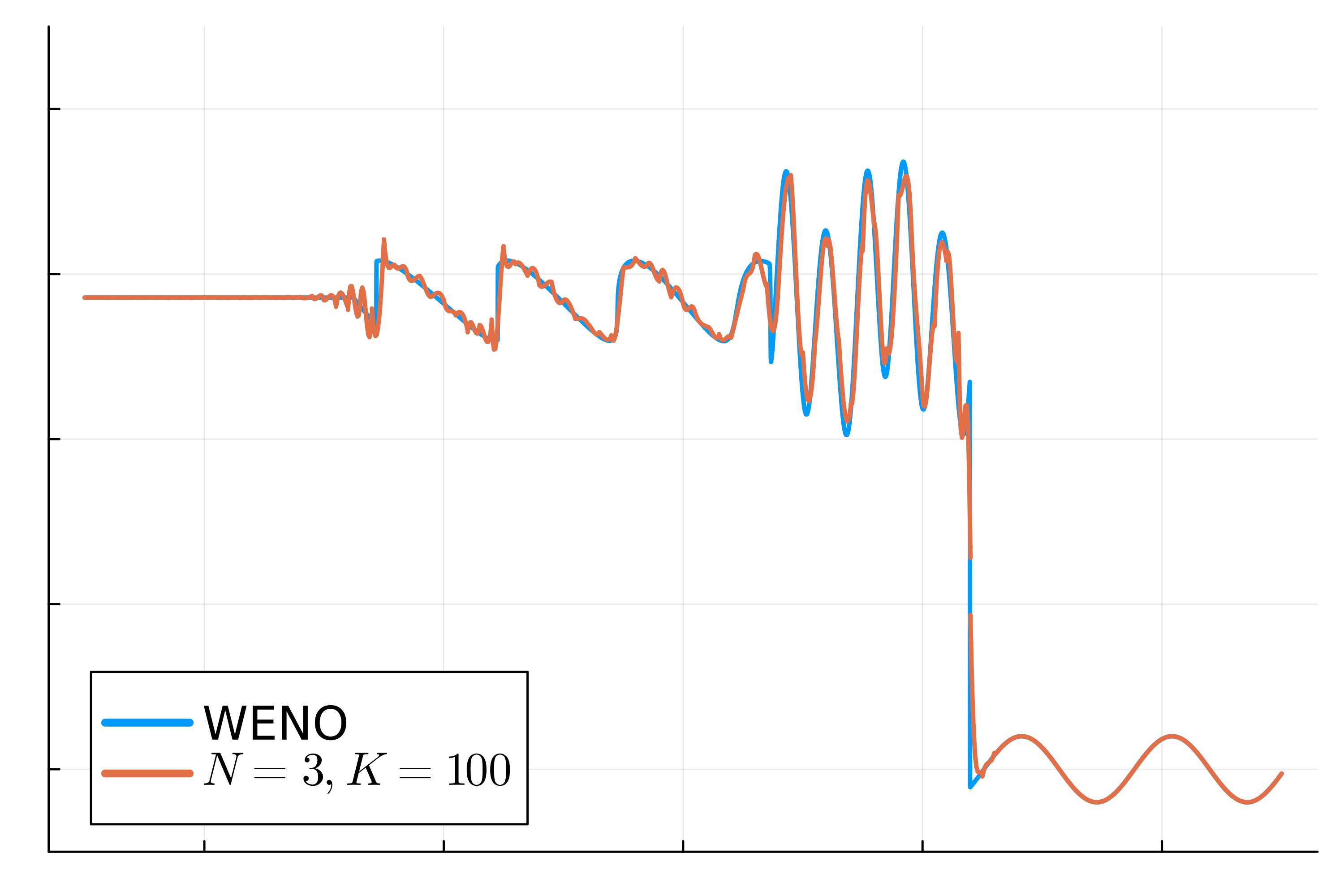}}
\caption{Entropy stable DG solutions \rnote{(density)} of the Shu-Osher problem  with $N=3$, $100$ elements.}
\label{fig:shu_osher_N3_K100}
\end{figure}

\begin{figure}
\centering
\subfloat[Modal DG with flux differencing ]{\includegraphics[width=.32\textwidth, trim={9em 8em 60em 27.5em}, clip]{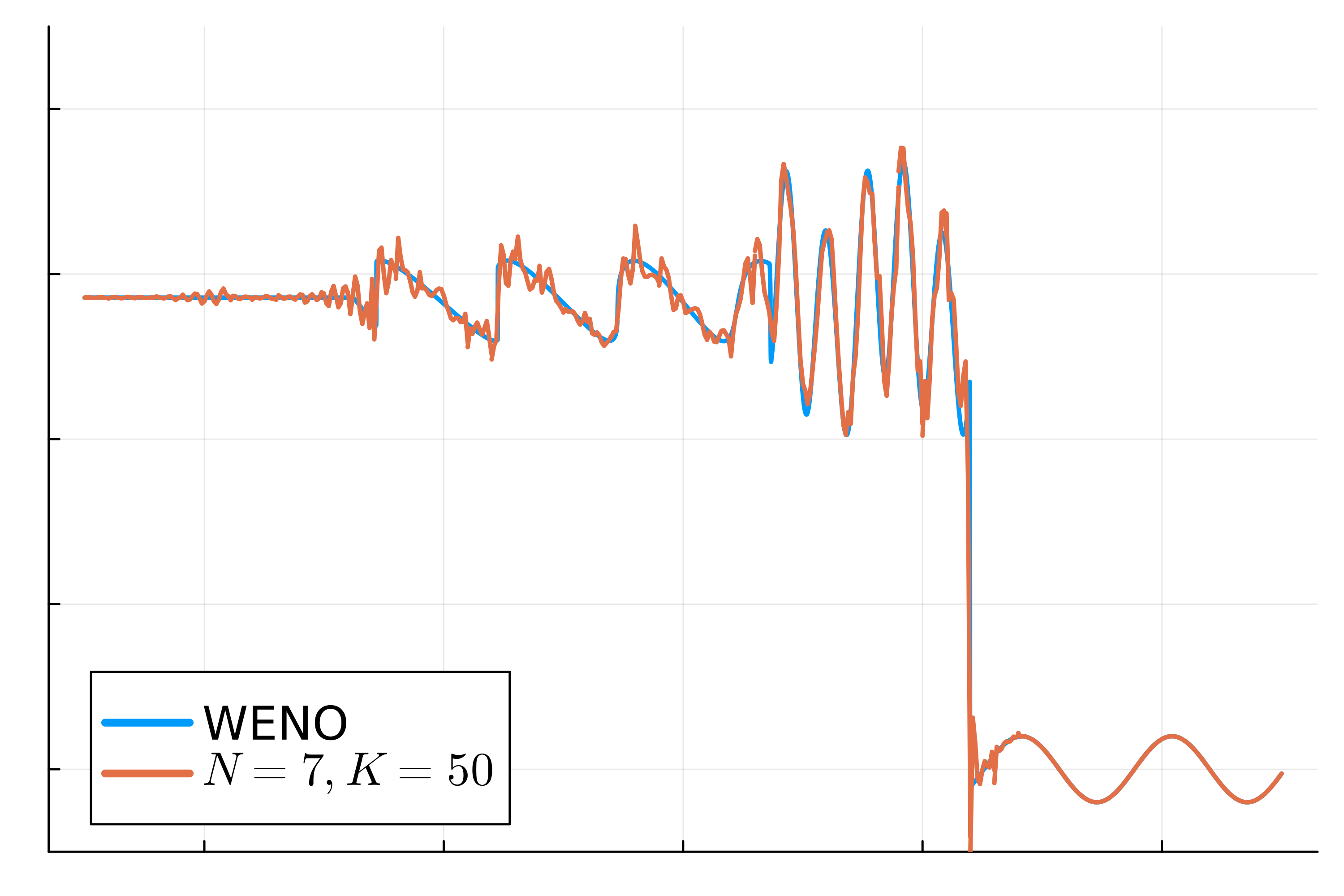}}
\hspace{.1em}
\subfloat[Nodal DG with AV]{\includegraphics[width=.32\textwidth, trim={9em 8em 60em 27.5em}, clip]{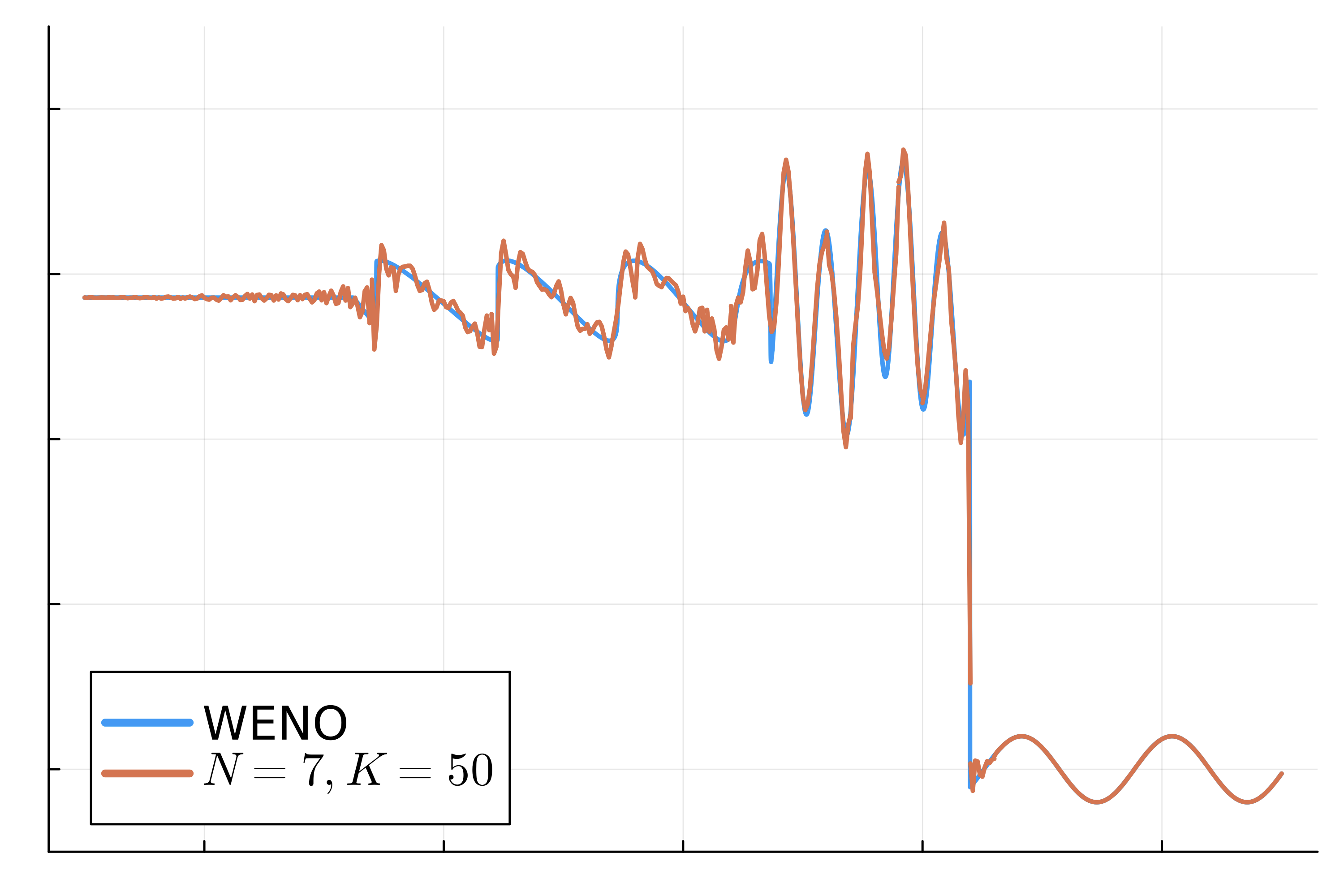}}
\hspace{.1em}
\subfloat[Modal DG with AV]{\includegraphics[width=.32\textwidth, trim={9em 8em 60em 27.5em}, clip]{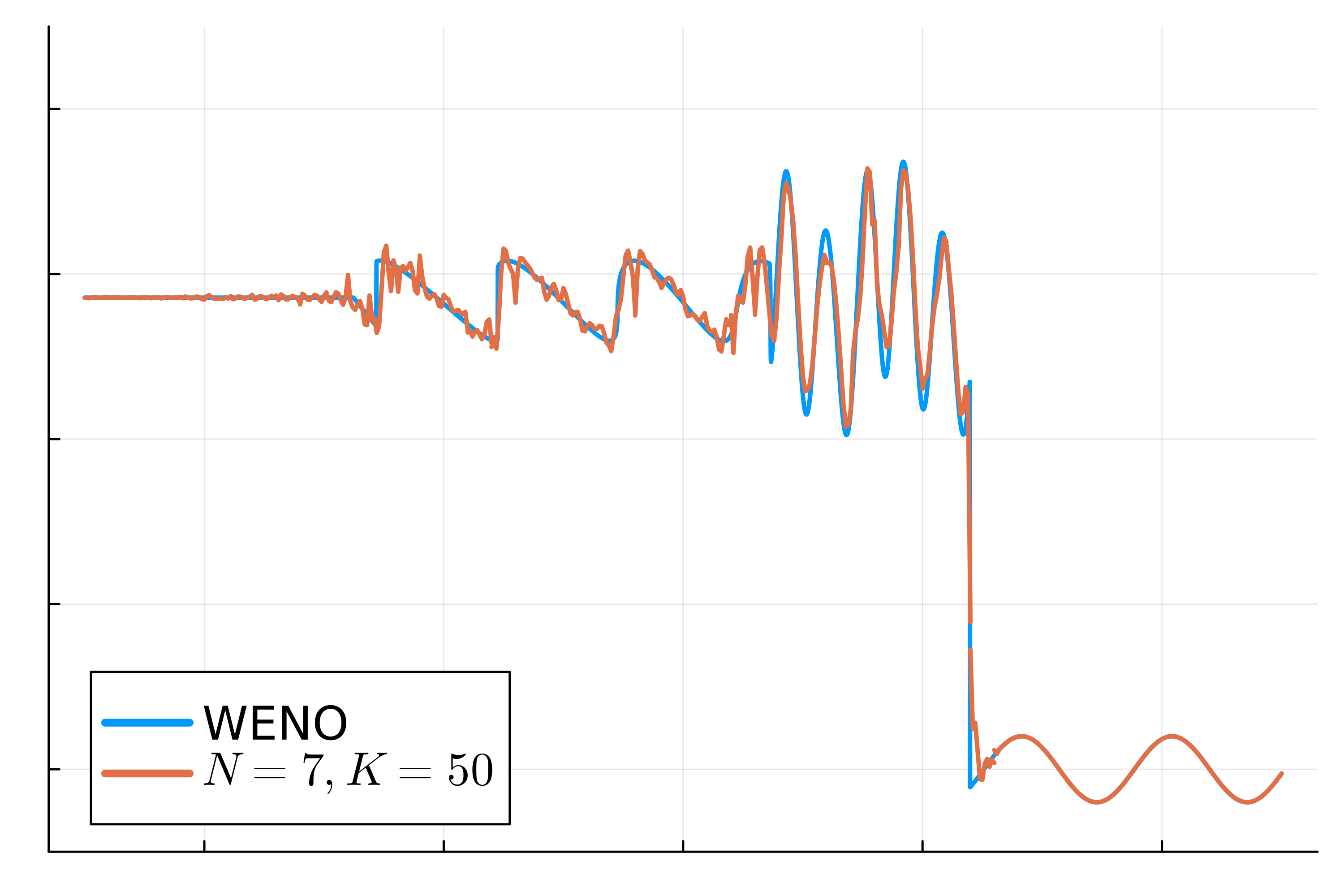}}
\caption{Entropy stable DG solutions \rnote{(density)} of the Shu-Osher problem  with $N=7$, $50$ elements. Results for nodal DG with flux differencing are not included as the simulation did not run to completion.}
\label{fig:shu_osher_N7_K50}
\end{figure}

We compare the computed DG solutions to a reference solution computed using 5th order WENO with 25000 cells. Figures~\ref{fig:shu_osher_N3_K50}, \ref{fig:shu_osher_N3_K100}, and \ref{fig:shu_osher_N7_K50} show various entropy stable nodal and modal DG solutions for an under-resolved case (degree $N=3$ with 50 elements), a more resolved moderate order case ($N=3$ with $100$ elements) and a more resolved high order case ($N=7$ with $50$ elements). Note that the $N=7$, $50$ element case results in the same total number of unknowns as the $N=3$, $100$ element case. 

We observe that nodal flux differencing entropy stable schemes tend to produce solutions with significant spurious oscillations. These oscillations are significantly reduced for either modal flux differencing entropy stable DG schemes or DG with \gnote{entropy correction artificial viscosity}. We also observe that DG results with \gnote{entropy correction artificial viscosity} are slightly more diffusive than the modal flux differencing entropy stable DG results, as mentioned in Remark~\ref{remark:dissipation}.

We also compare the number of timesteps taken by the adaptive SSPRK43 method for the specified tolerances. The nodal flux differencing entropy stable DG method required the fewest timesteps (15206). However, the method only remained stable for $N=3$, and results in large magnitude spurious oscillations. The modal flux differencing entropy stable DG scheme contains significantly fewer spurious oscillations, and runs stably for both $N=3$ and $N=7$. However, the number of time-steps required is roughly double that of the nodal entropy stable DG scheme (35480 for $N=3$ and $32793$ for $N=7$). Both the nodal and modal artificial viscosity schemes behaved similarly with respect to the number of timesteps; for $N=3$, roughly 20k time-steps were required (21781 for nodal, 19741 for modal), while for $N=7$, roughly 30k time-steps were required (30963 for nodal, 33119 for modal). We note that these results depend heavily on the tolerance and the choice of time-stepper; future work will analyze the maximum stable time-step restriction more rigorously. 


%

\subsection{A 2D Riemann problem}
\label{sec:riemann}
\begin{figure}
\centering 
\subfloat[Density]{\includegraphics[height=0.34\textheight, trim={31.5em 7em 31.5em 7em}, clip]{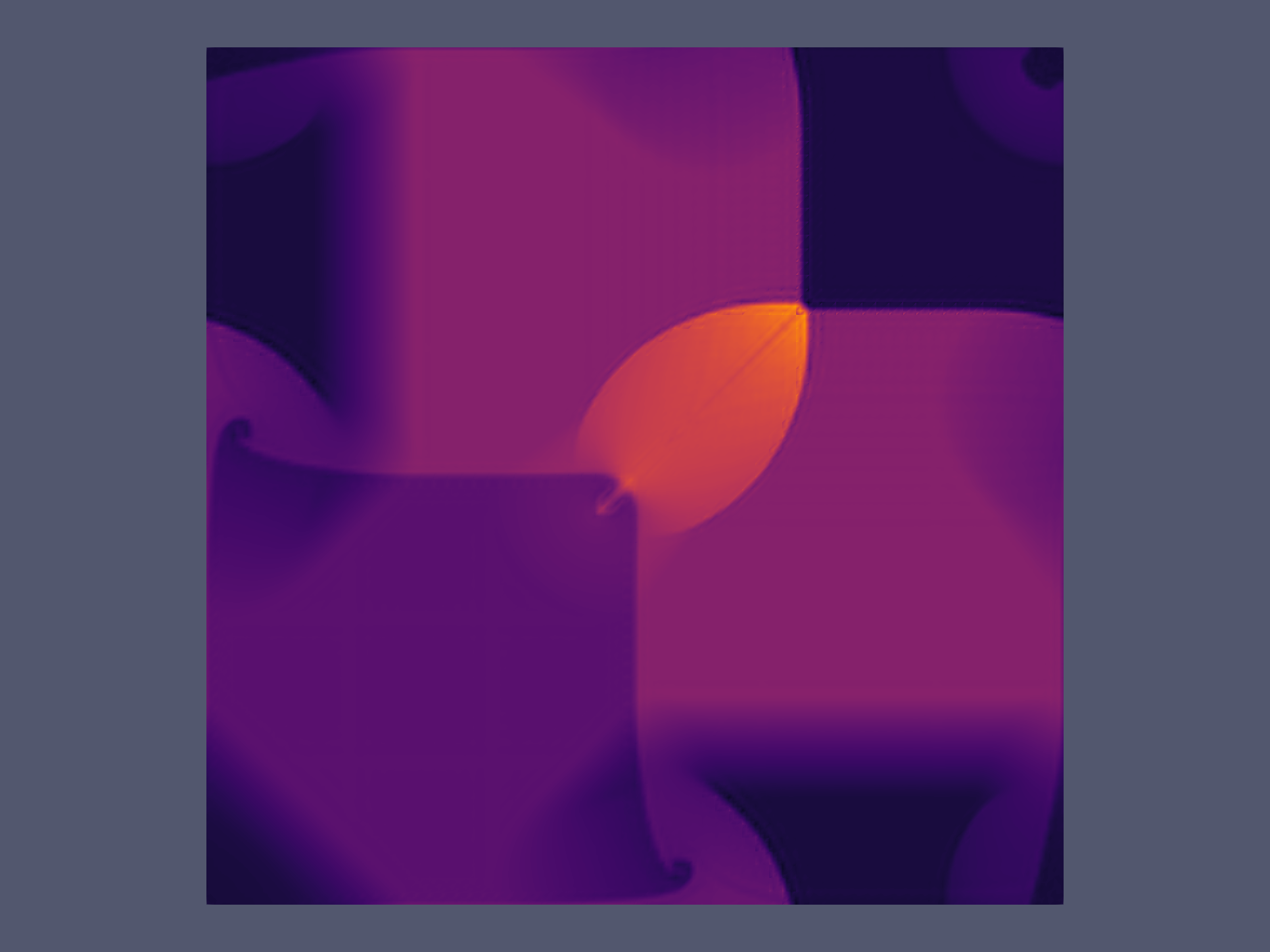}}
\hspace{.25em}
\subfloat[Viscosity coefficient $\epsilon_k(\bm{u}_h)$]{\includegraphics[height=0.34\textheight, trim={31.5em 7em 16em 7em}, clip]{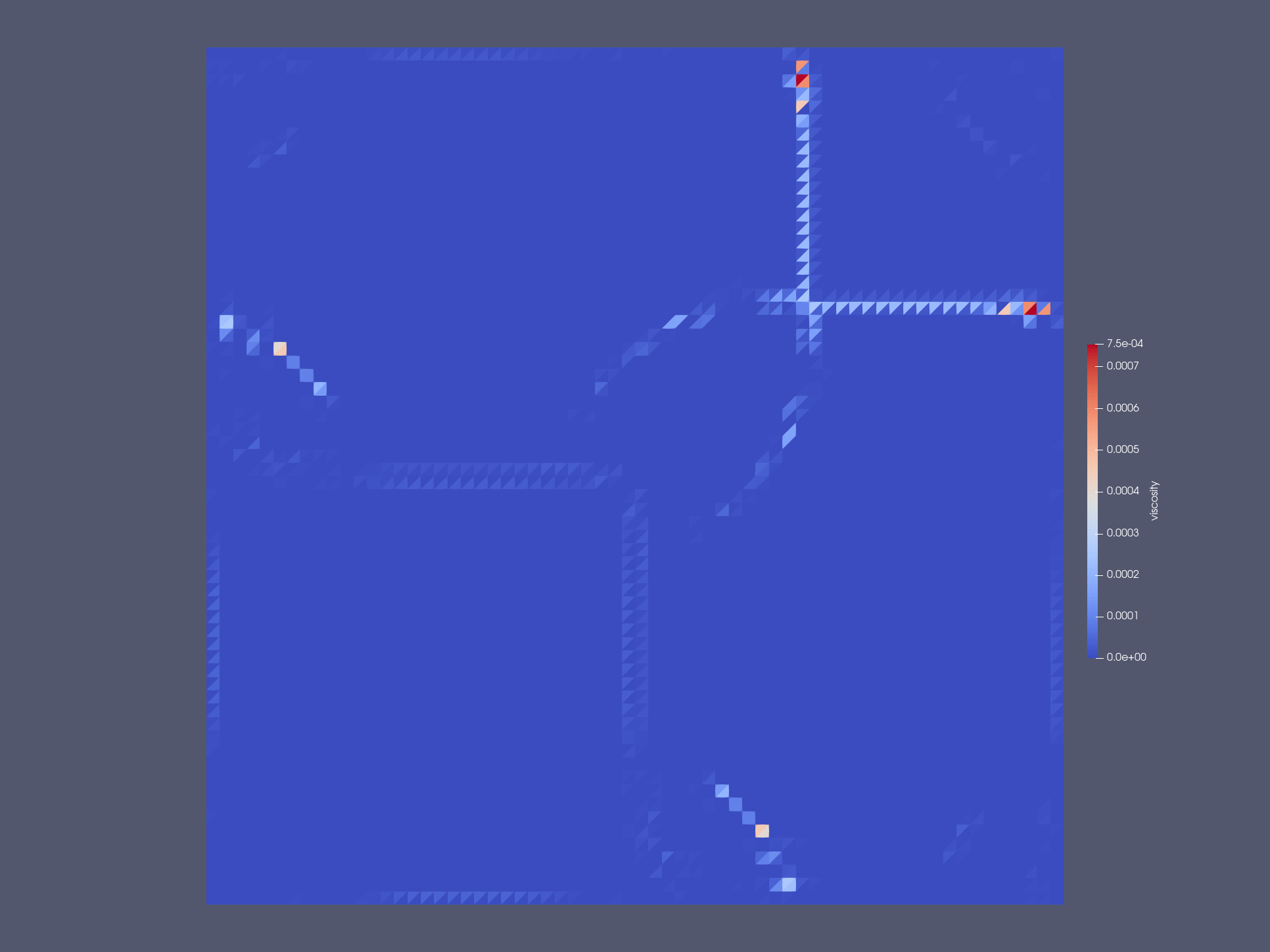}}
\caption{Solution of the 2D Riemann problem with degree $N=3$ on a $64\times 64 \times 2$ triangular mesh.}
\label{fig:riemann_2D}
\end{figure}

Next we consider a periodic version \cite{chan2018discretely} of a 2D Riemann problem from \cite{kurganov2002solution}. 
The problem is posed on a periodic domain $[-1, 1]^2$ with initial condition
\[
(\rho, u_1, u_2, p) = \begin{cases}
(0.5313, 0, 0, 0.4) & x > 0, y > 0\\
(1, 0.7276, 0, 1) & x < 0, y > 0\\
(0.8, 0, 0, 1) & x < 0, y < 0\\
(1, 0, 0.7276, 1) & x > 0, y < 0.
\end{cases}
\]
Figure~\ref{fig:riemann_2D} shows the density at final time $T=0.25$, which we note looks very similar to the density computed by a modal flux differencing entropy stable DG method of the same degree and mesh resolution \cite{chan2018discretely}. Figure~\ref{fig:riemann_2D} also shows a visualization of the viscosity coefficient $\epsilon_k(\bm{u}_h)$ given by \eqref{eq:eps}. While the \gnote{entropy correction artificial viscosity} coefficient appears to serve as an accurate troubled cell indicator, we emphasize that it was constructed only to replicate the entropy inequality satisfied by flux differencing entropy stable DG methods. 

\subsection{Long-time Kelvin-Helmholtz instability}

\begin{figure}
\centering 
\subfloat[$N=3$, $64\times 64\times 2$ mesh]{\includegraphics[height=0.35\textheight, trim={31.5em 7em 31.5em 7em}, clip]{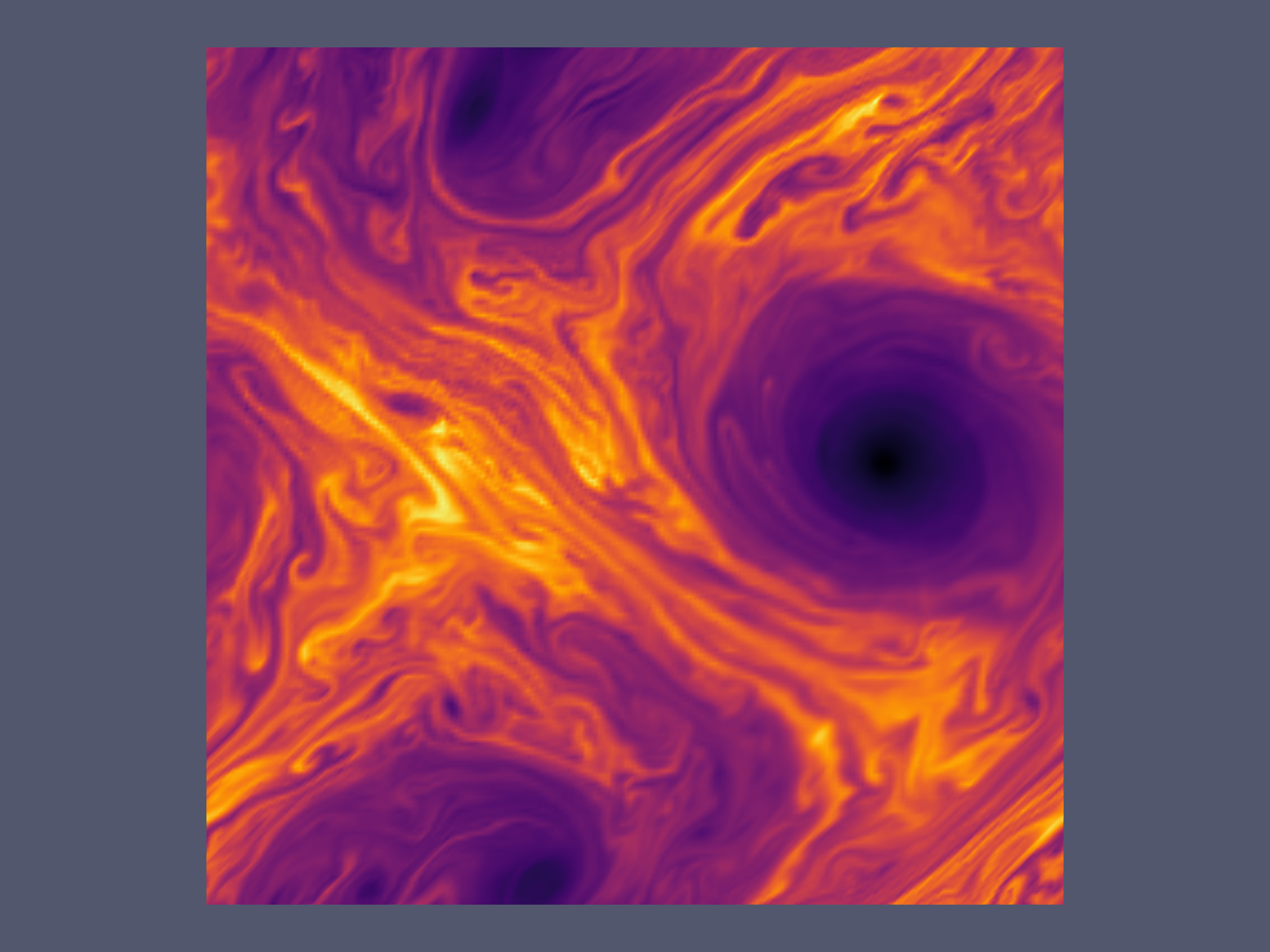}}
\hspace{.5em}
\subfloat[$N=7$, $32\times 32\times 2$ mesh]{\includegraphics[height=0.35\textheight, trim={31.5em 7em 31.5em 7em}, clip]{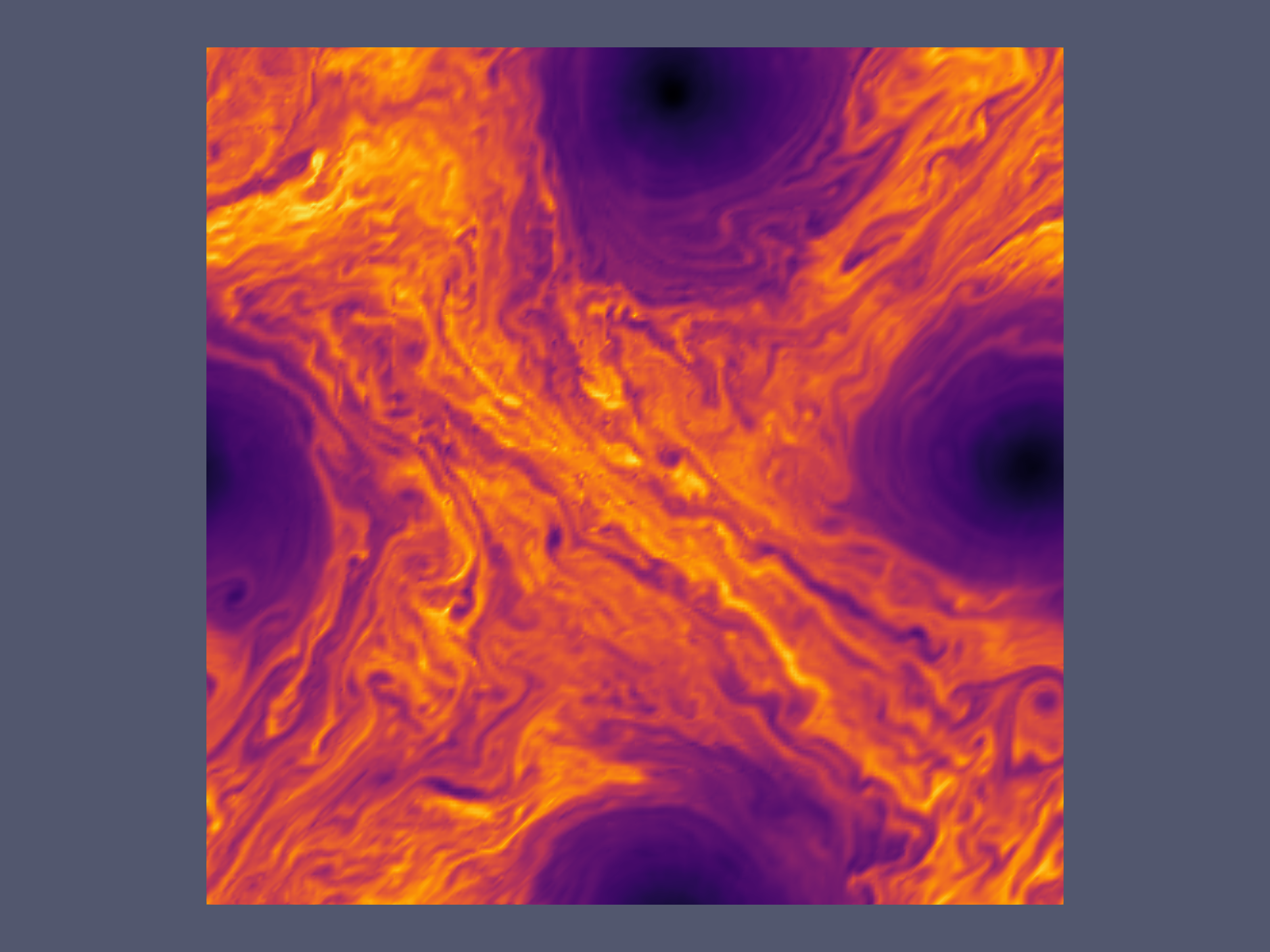}}
\caption{Solution \rnote{(density)} of the 2D long-time Kelvin-Helmholtz instability using DG with \gnote{entropy correction artificial viscosity} on triangular meshes.}
\label{fig:KHI}
\end{figure}

We conclude with a long-time Kelvin-Helmholtz instability introduced in \cite{chan2022entropyprojection}. This example is run until final time $T=25$ in order to develop pressure shocklets and small scale features resembling turbulence. The initial condition is modified to add a small $O(1/100)$ non-symmetric scaling to the velocity in order to break symmetry. We note that, due to the lack of a viscous limit for the compressible Euler equations, numerical solutions to the Kelvin-Helmholtz instability depend strongly on discretization parameters \cite{fjordholm2017construction} and do not converge to a unique solution under degree or mesh refinement. Thus, this test case should only be considered a test of robustness and a rough qualitative measure of how numerically dissipative a scheme is. 

Figure~\ref{fig:KHI} shows the solutions for both degree $N=3$ and degree $N=7$ solutions with roughly the same number of global degrees of freedom. As expected, the higher order approximation is able to resolve smaller scale features. Compared with the flux differencing entropy stable schemes in \cite{chan2022entropyprojection}, DG with \gnote{entropy correction artificial viscosity} appears to be slightly more dissipative. However, as argued in \cite{gassner2022stability}, because of the lack of \gnote{local} linear stability for flux differencing entropy stable schemes, it is not clear whether small scale structures present in a flux differencing entropy stable DG solution are physical or due to the introduction and evolution of non-physical numerical artifacts.

%
%

\section{Conclusion} 

In this paper, we introduce an \gnote{entropy correction artificial viscosity} method for recovering entropy stability for standard weak form DG discretizations. The artificial viscosity coefficients incorporate a volume entropy residual which utilizes the entropy projection. We prove that this estimate is super-convergent for sufficiently regular solutions and prove that, if interface fluxes are computed using the entropy projection, a standard DG method with \gnote{entropy correction artificial viscosity} satisfies the same global entropy inequality satisfied by flux differencing entropy stable DG methods. 

Numerical experiments suggest that DG methods with \gnote{entropy correction artificial viscosity} are slightly more dissipative than flux differencing entropy stable DG schemes. However, DG methods with \gnote{entropy correction artificial viscosity} enjoy many of the same properties of flux differencing DG methods (such as robustness and high order accuracy), while also reducing spurious oscillations and improving \gnote{local} linear stability. 

The Julia codes used to generate the results in this paper are available at \url{https://github.com/jlchan/paper-artificial-viscosity-entropy-stable-2025}. 

\section{Acknowledgements}

The authors gratefully acknowledge support from National Science Foundation under awards DMS-1943186 and DMS-223148. The authors also thank Alex Cicchino, Siva Nadarajah, Brian Christner, Raymond Park, Philipp \"{O}ffner, Lucas Wilcox, and Ayaboe Edoh for helpful discussions. The authors also acknowledge the Atum.jl library (\url{https://github.com/mwarusz/Atum.jl}), whose implementation of the matrix dissipation flux was used in this work \cite{winters2017uniquely, waruszewski2022entropy}. \gnote{Finally, the author thanks the three anonymous reviewers, whose comments significantly improved the quality and presentation of this manuscript.}

\appendix
\section{Additional 1D numerical experiments}
\label{sec:additional_1d}

In this section, we include some additional one-dimensional experiments. 

\subsection{Modified Sod with HLLC and Roe-type matrix dissipation interface fluxes}
\label{sec:hllc}

In addition to the local Lax-Friedrichs flux, we also experimented with HLLC interface fluxes \cite{batten1997choice} and interface fluxes based on Roe-type matrix dissipation \cite{winters2017uniquely, waruszewski2022entropy}, both of which are contact-preserving. These contact preserving fluxes were also shown to improve the order of convergence for entropy stable nodal DG methods \cite{hindenlang2020order} and avoid spurious oscillations for flux differencing nodal DG methods applied to a variant of the original Sod shock tube \cite{waruszewski2022entropy}. 

\begin{figure}
\centering
\subfloat[Flux differencing nodal DG]{\includegraphics[width=.45\textwidth, trim={9em 8em 9em 5em}, clip]{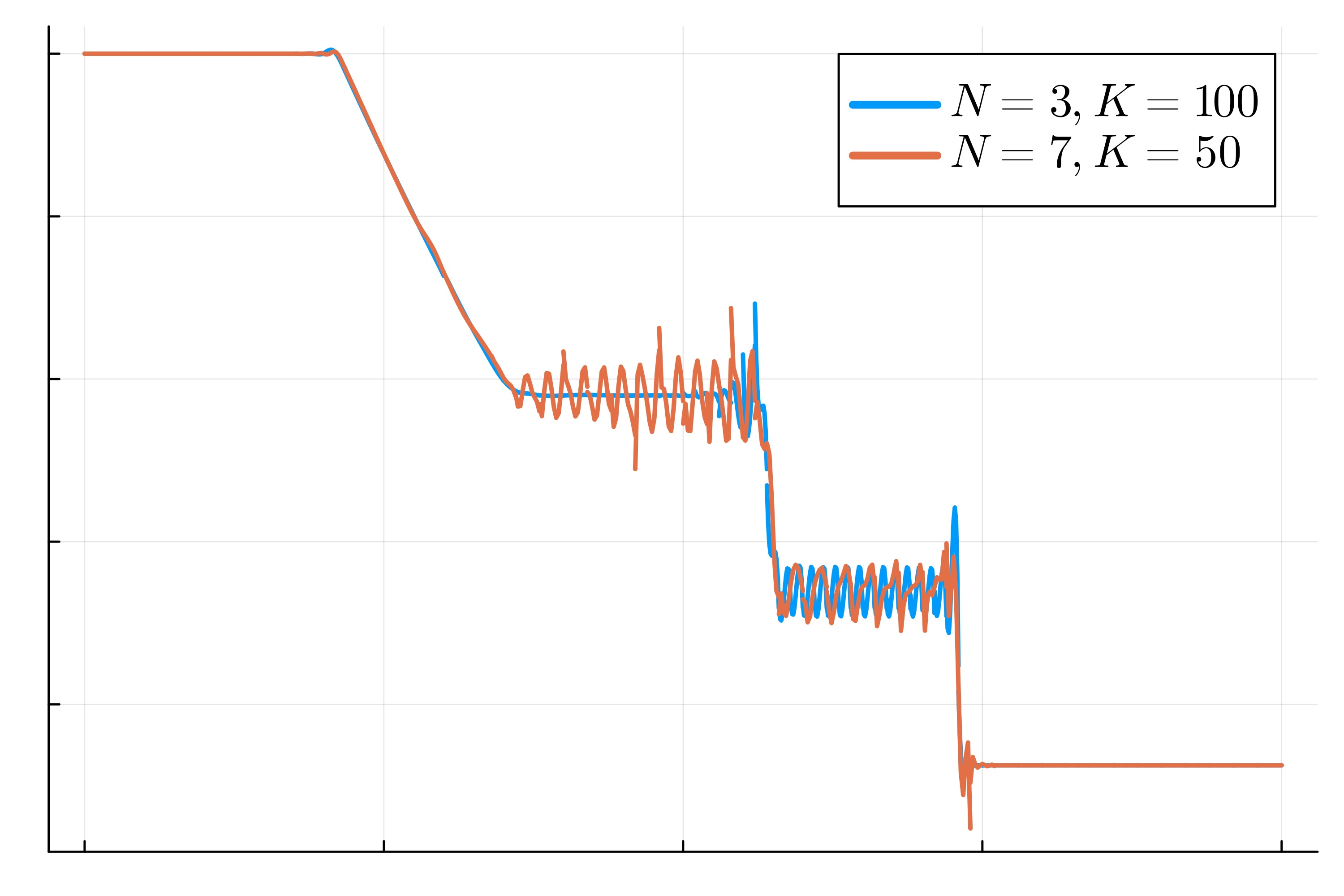}}
\hspace{.1em}
\subfloat[Nodal DG with AV]{\includegraphics[width=.45\textwidth, trim={9em 8em 9em 5em}, clip]{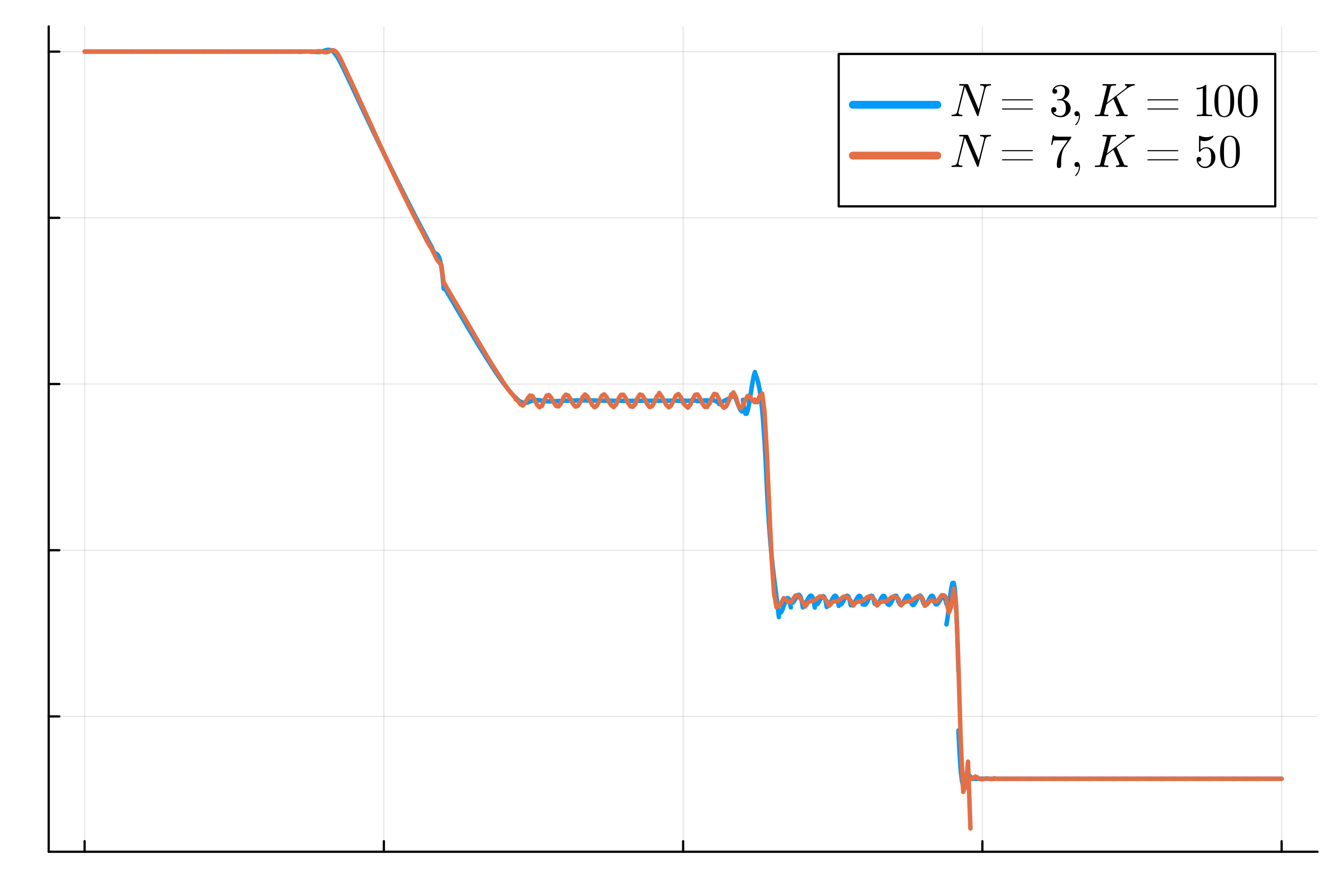}}\\
\subfloat[Flux differencing modal DG]{\includegraphics[width=.45\textwidth, trim={9em 8em 9em 5em}, clip]{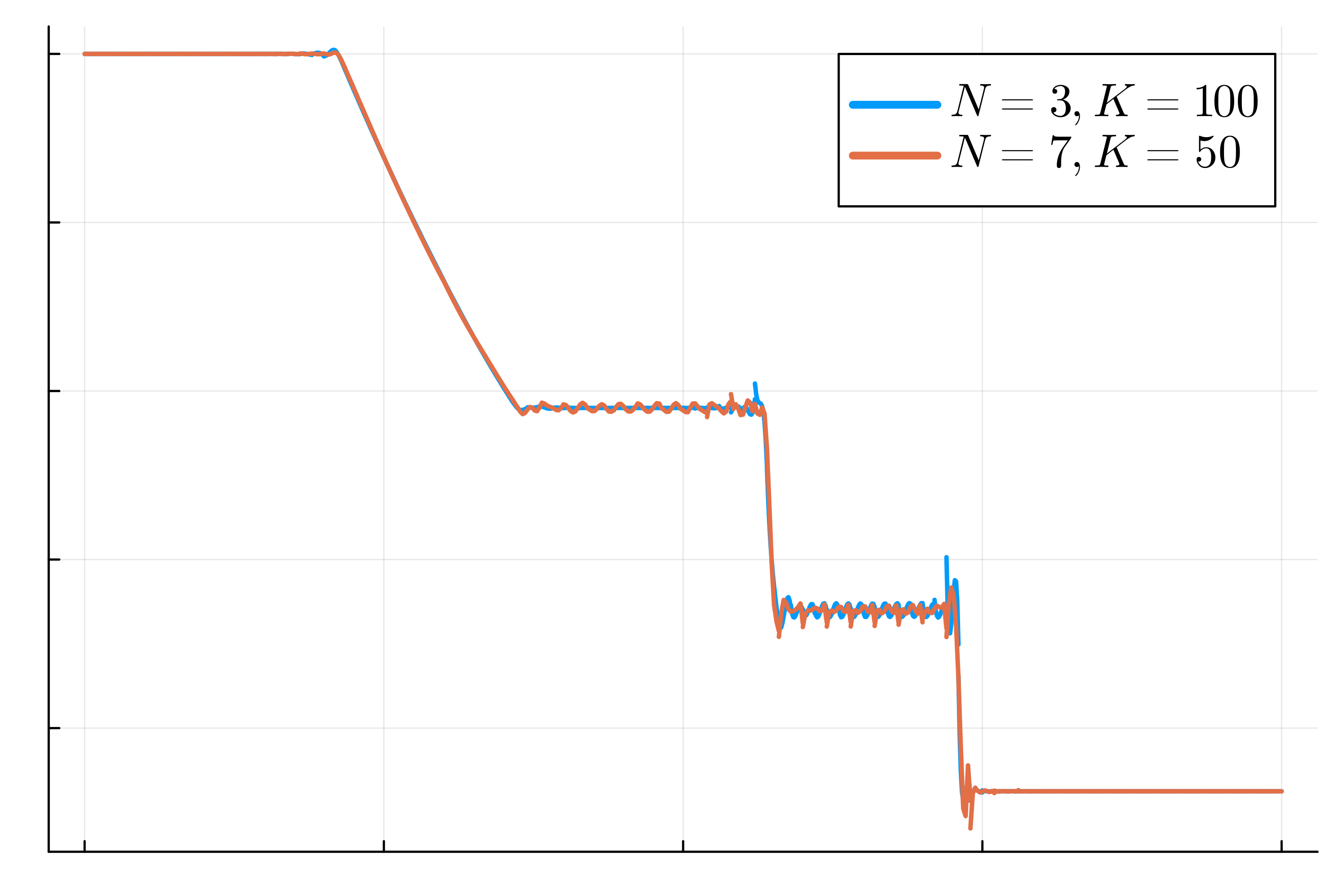}}
\hspace{.1em}
\subfloat[Modal DG with AV]{\includegraphics[width=.45\textwidth, trim={9em 8em 9em 5em}, clip]{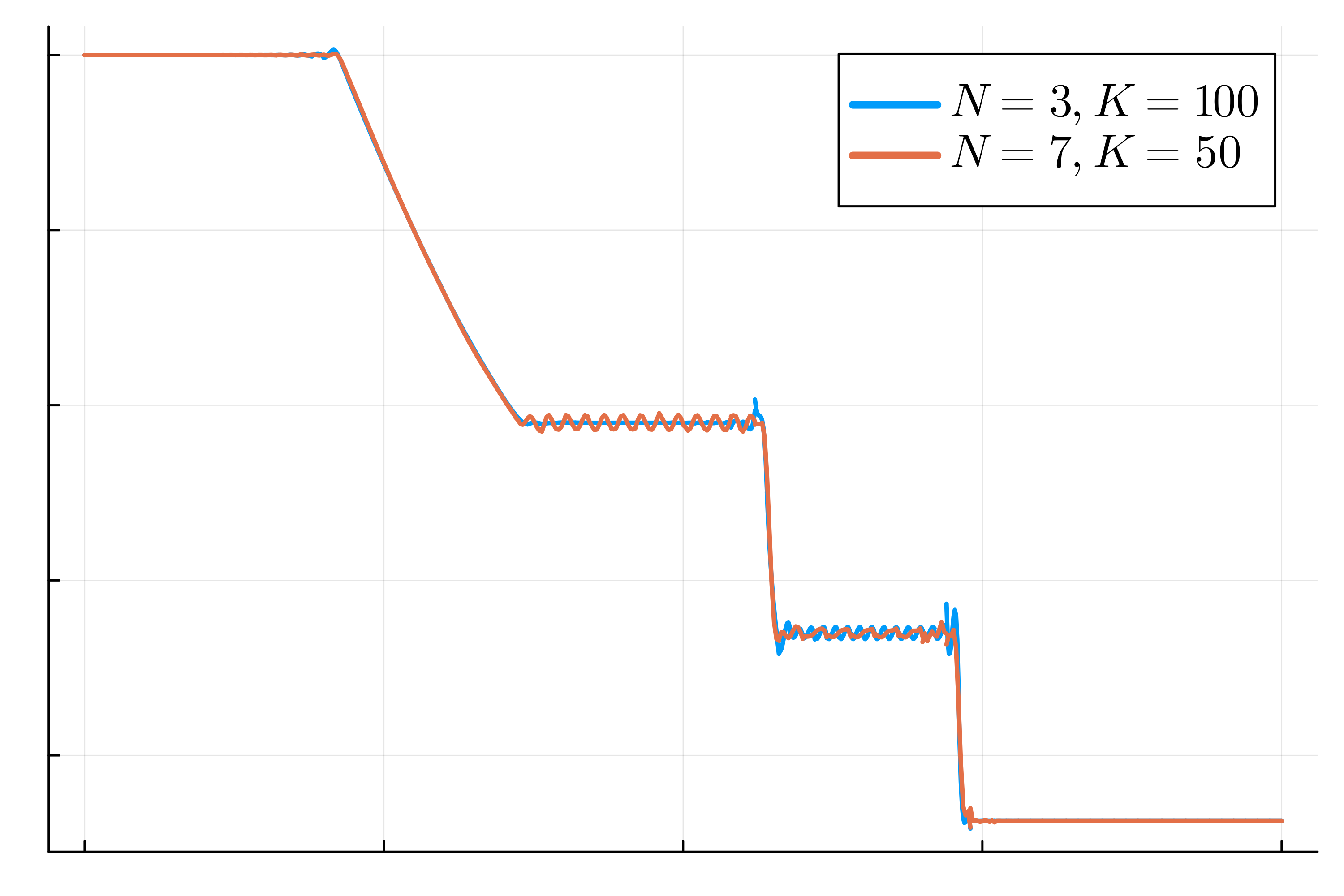}}\\
\caption{Solutions  \rnote{(density)} of modified Sod with the HLLC interface flux.} 
\label{fig:modSod_better_flux}
\end{figure}

Figure~\ref{fig:modSod_better_flux} shows numerical results for the modified Sod problem using the HLLC flux (the matrix dissipation flux of \cite{winters2017uniquely, waruszewski2022entropy} produced very similar results). For degree $N=3$, we observe that the use of HLLC and matrix dissipation fluxes reduce oscillations between the rarefaction wave and the contact discontinuity, but that they do not reduce oscillations between the contact discontinuity and shock. However, for $N=7$, we do not observe a similar reduction in oscillations between the contact discontinuity and shock for either contact-preserving flux. 


\subsection{Modified Sod with smaller post-shock density and pressure}

\begin{figure}
\centering
\subfloat[Nodal DG with AV]{\includegraphics[width=.47\textwidth, trim={9em 8em 9em 5em}, clip]{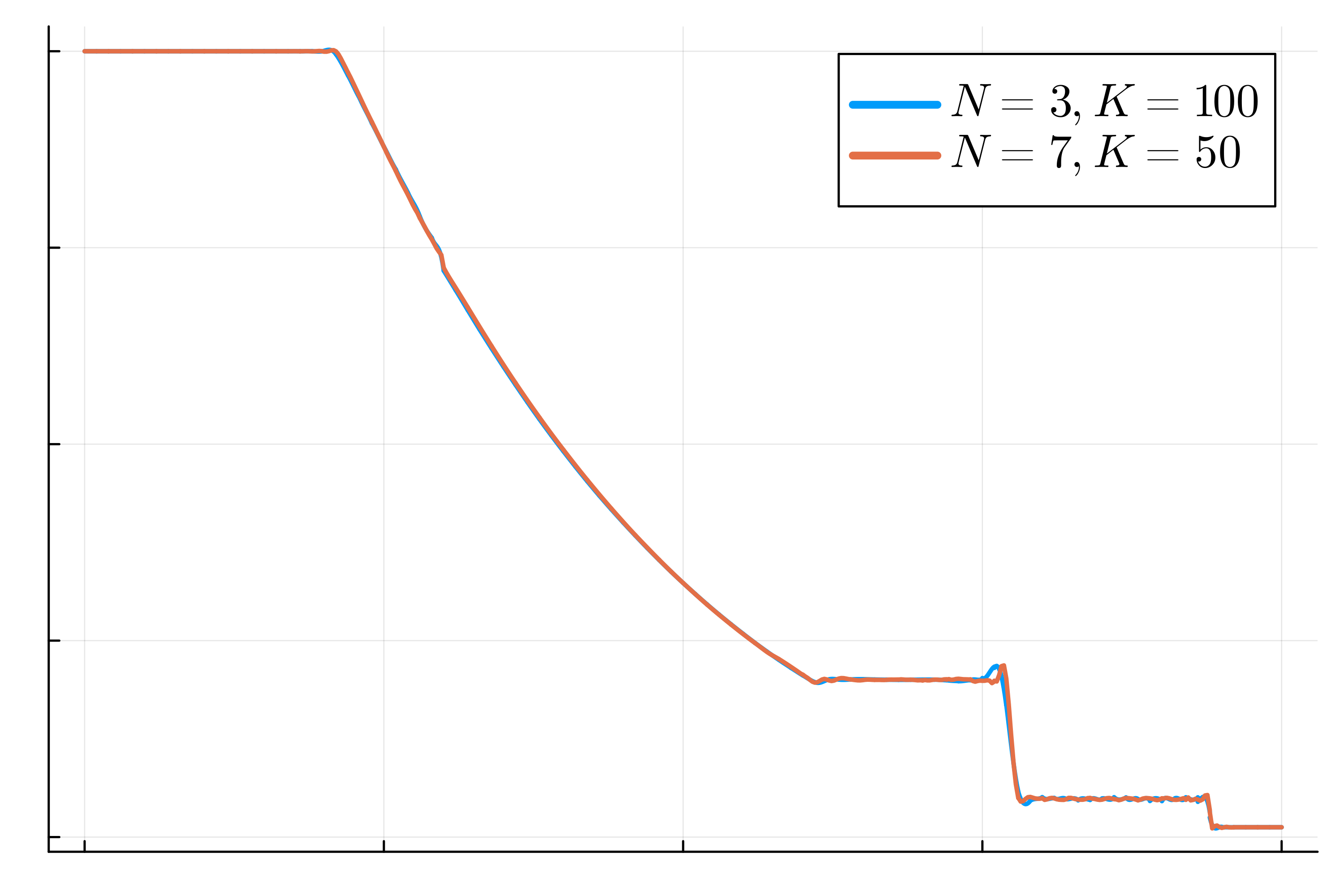}}
\hspace{.25em}
\subfloat[Modal DG with AV]{\includegraphics[width=.47\textwidth, trim={9em 8em 9em 5em}, clip]{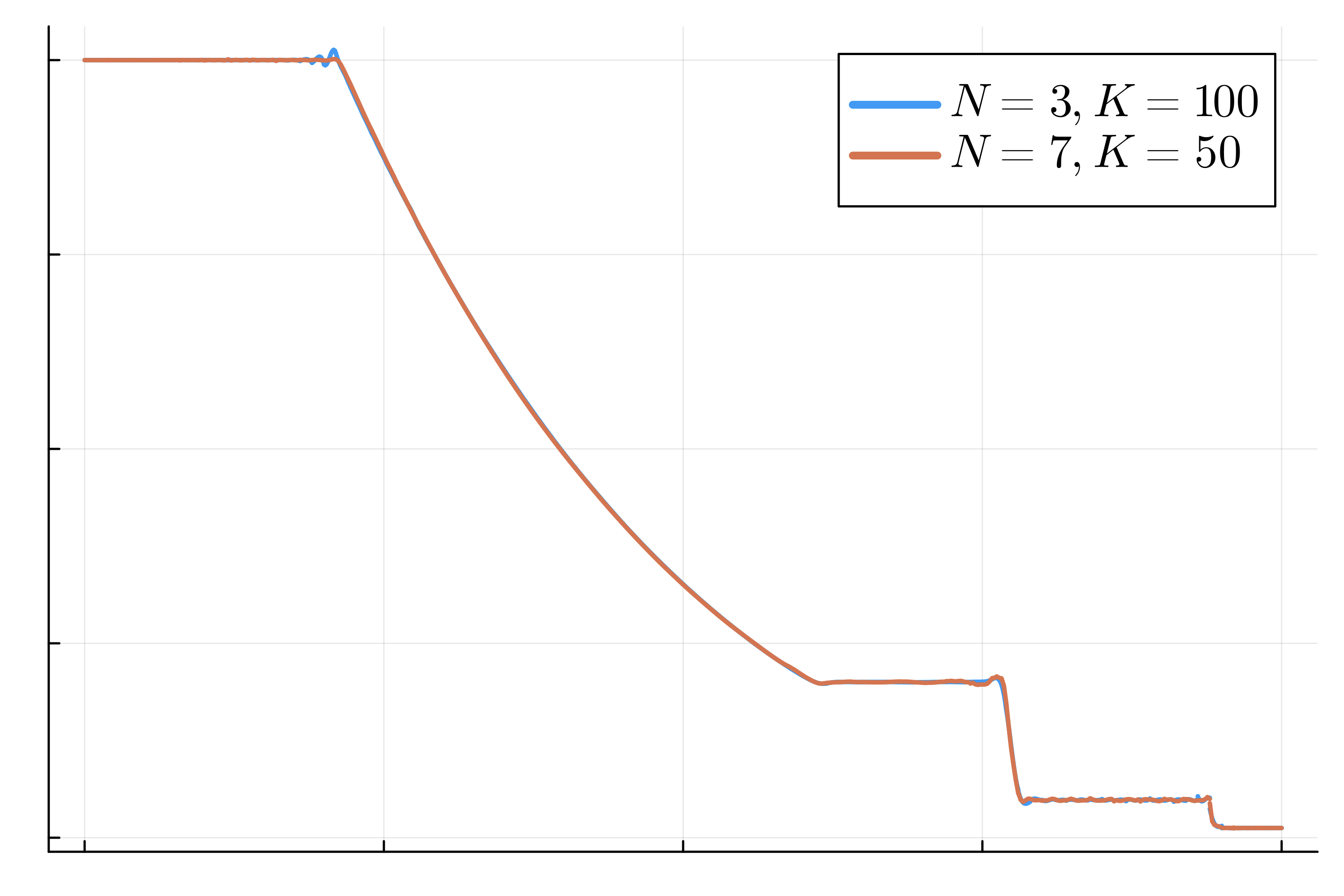}}
\caption{Solutions \rnote{(density)} of modified Sod with smaller post-shock density and pressure using standard DG with \gnote{entropy correction artificial viscosity}.} 
\label{fig:modSod_near_vacuum}
\end{figure}

Next, we examined a version of modified Sod with smaller post-shock density and pressure, where we decreased the post-shock density and pressure by an additional factor of 10, such that $(\rho, u, p) = (.0125, 0, .01)$ for $x > 3$ in \eqref{eq:modSod}. For this initial condition, both nodal and modal flux differencing entropy stable DG methods crash before the final time of $T = 0.2$. Figure~\ref{fig:modSod_near_vacuum} shows the resulting solution profile for nodal and modal DG with \gnote{entropy correction artificial viscosity}. We note that the solution remains positive, despite the fact that for the modal DG discretization, the entropy projection (which is sensitive when density and pressure are small) is used to compute interface fluxes \cite{chan2022entropyprojection}. 

Finally, we note that for this version of modified Sod, the artificial viscosity near the shock was roughly $60\times$ larger than for the standard modified Sod problem \eqref{eq:modSod}. However, the number of time-steps taken by the adaptive time-stepper for this smaller density and pressure case only increased by a factor of $\approx 1.267$ compared with the standard modified Sod initial condition \eqref{eq:modSod}, indicating that the magnitude of the artificial viscosity was not consistently large enough to induce a parabolic  $O(h^2)$ maximum stable explicit time-step restriction. 

\subsection{Enforcing additional entropy inequalities}

To demonstrate the impact of using the entropy projection $\tilde{\bm{u}}$ in the volume entropy residual \eqref{eq:entropy_ineq_error}, we compare a DG method where an additional entropy inequality is enforced. The second entropy inequality we enforce is the same as \eqref{eq:entropy_ineq_error} except that the entropy potential $\psi_m$ is evaluated at the DG solution $\bm{u}_h$ rather than the entropy projection $\tilde{\bm{u}}$:
\begin{equation}
\sum_{m=1}^d \LRs{ \LRp{-\bm{f}_m(\bm{u}_h), \pd{\Pi_N\bm{v}(\bm{u}_h)}{x_m}}_{D^k} + \LRa{\psi_m(\bm{u}_h)n_m, 1}_{\partial D^k}}.
\label{eq:entropy_ineq_error_2}
\end{equation}
We note that we cannot use this modified entropy inequality on its own; enforcing only \eqref{eq:entropy_ineq_error_2} results in unstable simulations where the adaptive time-step size converges to zero. However, we can enforce both entropy inequalities \eqref{eq:entropy_ineq_error} and \eqref{eq:entropy_ineq_error_2} by computing two artificial viscosity coefficients using \eqref{eq:eps} (one for each entropy inequality) and taking the maximum \cite{guermond2011entropy}. 

\begin{figure}
\centering
\includegraphics[width=.47\textwidth, trim={9em 8em 9em 5em}, clip]{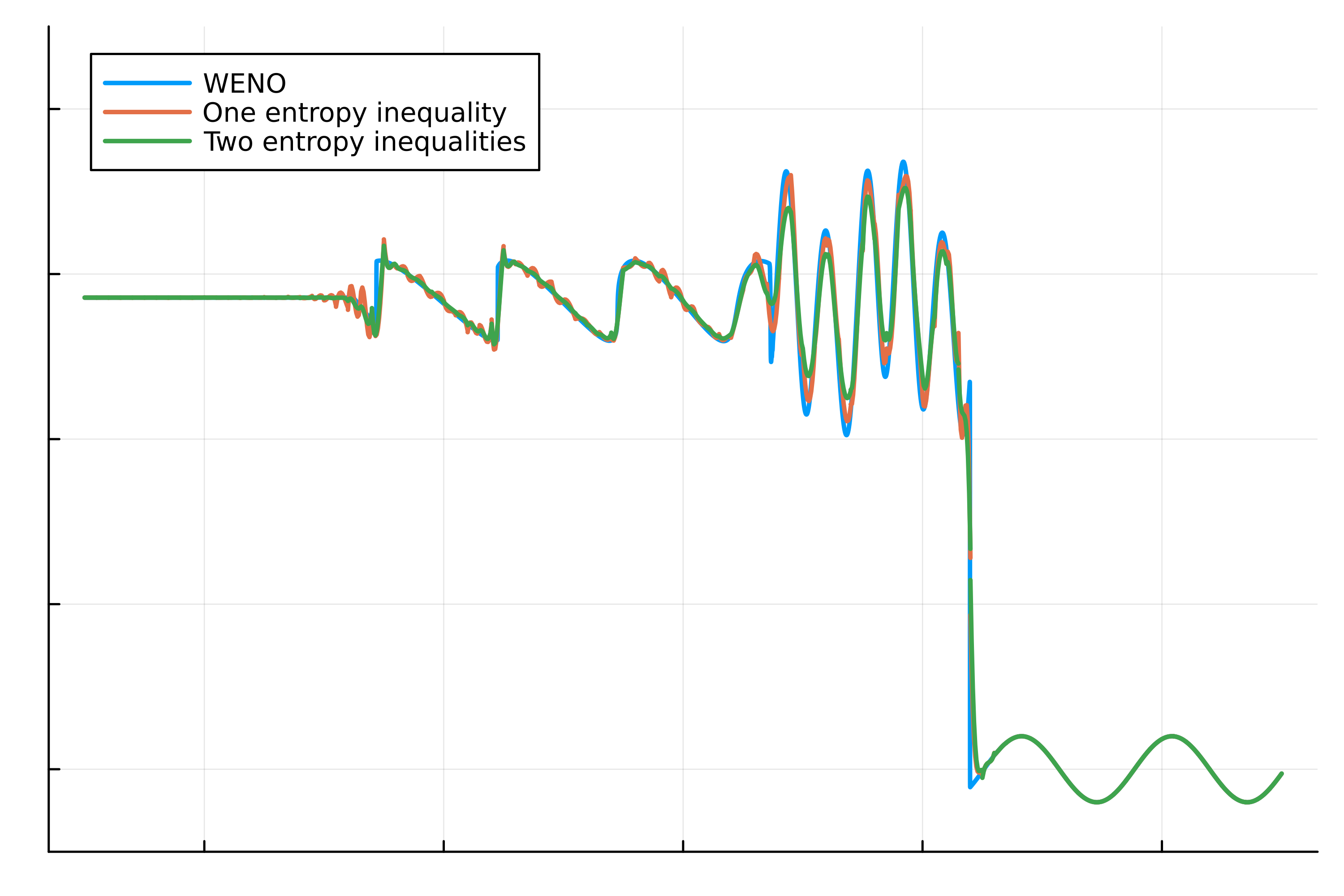}
\hspace{.1em}
\includegraphics[width=.47\textwidth, trim={9em 8em 9em 5em}, clip]{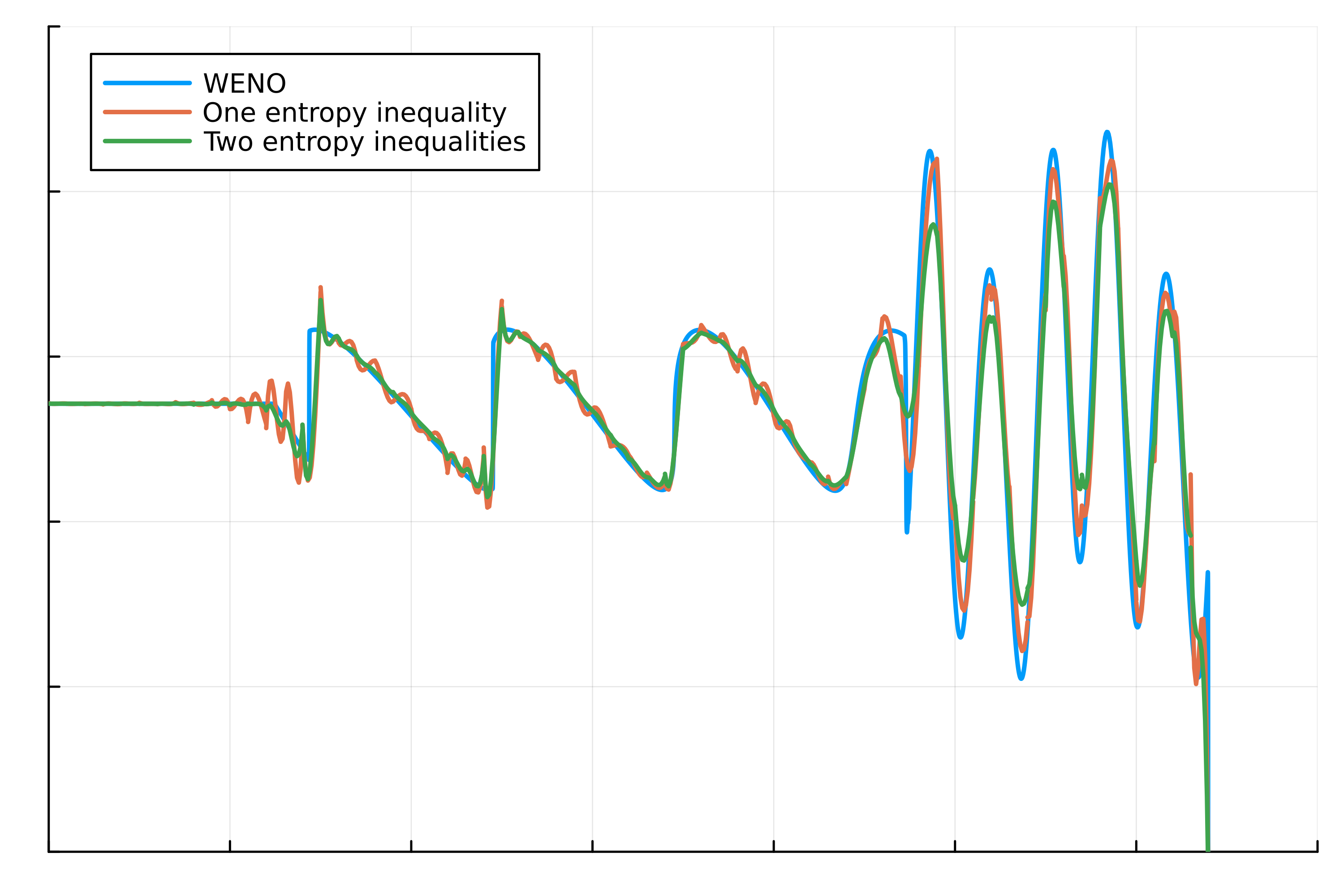}
\caption{Comparison of DG solutions (\rnote{density}, degree $N=3$, $100$ elements) to the Shu-Osher problem when enforcing one and two entropy inequalities.} 
\label{fig:two_entropy_inequalities}
\end{figure}

Figure~\ref{fig:two_entropy_inequalities} shows the results of a degree $N=3$ modal DG formulation under one and both of these entropy inequalities. We observe that enforcing the second entropy inequality without the entropy projection $\tilde{\bm{u}}$ results in a more diffusive solution. This is consistent with the fact that the volume entropy residual with the entropy projection \eqref{eq:entropy_ineq_error} converges to zero at a faster rate than the volume entropy residual without entropy projection \eqref{eq:entropy_ineq_error_2} based on estimates in \cite{gaburro2023high}. Enforcing this additional entropy inequality also appears to provide some shock capturing effects, as oscillations around shocks are smoothed out. \vnote{We note that a similar approach could be used to combine entropy correction artificial viscosity with shock capturing artificial viscosity; one could take the maximum of the entropy correction artificial viscosity coefficient and the shock capturing artificial viscosity coefficient.}


\section{A subcell version of the viscosity coefficient}
\label{sec:subcell}

If we wish to allow the local \gnote{entropy correction artificial viscosity} coefficient $\epsilon_k(\bm{u}_h)$ to vary spatially within an element, we can determine an optimal choice for $\epsilon_k(\bm{u}_h)$ by minimizing the $L^2$ norm of $\epsilon_k(\bm{u}_h)$ subject to the entropy identity \eqref{eq:cell_entropy_ineq_condition} and a non-negativity constraint. This optimization problem turns out to admit an analytical solution. 
\begin{lemma}
Let $\delta_k(\bm{u}_h)$ be the volume entropy residual \eqref{eq:entropy_ineq_error}. Consider the following inequality constrained optimization problem:
\begin{gather*}
\min_{\epsilon_k} \nor{\epsilon_k}^2_{D^k}\\
\sum_{i,j=1}^d \LRp{\epsilon_k \bm{K}_{ij}\bm{\Theta}_j, \bm{\Theta}_i}_{D^k} \geq -\min(0, \delta_k(\bm{u}_h)),\\
\epsilon_k(\bm{x}) \geq 0, \quad \forall \bm{x} \in D^k.
\end{gather*}
This optimization problem has an explicit solution
\begin{equation}
\epsilon_k(\bm{u}_h) = -\min(0, \delta_k(\bm{u}_h))\frac{a}{\nor{a}^2_{D^k}}, \qquad a(\bm{x}) = \sum_{i,j=1}^d \bm{\Theta}_i^T\bm{K}_{ij}\bm{\Theta}_j \geq 0.
\label{eq:explicit_sol}
\end{equation}
\label{lemma:opt}
\end{lemma}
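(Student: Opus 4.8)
The plan is to reduce this to a minimum-norm problem subject to a single linear inequality constraint, solve that relaxed problem explicitly via Cauchy--Schwarz, and then verify that the proposed minimizer automatically satisfies the pointwise non-negativity constraint, so that constraint is inactive. First I would rewrite the entropy-dissipation constraint as a single $L^2$ inner product. Setting $a(\bm{x}) = \sum_{i,j=1}^d \bm{\Theta}_i^T\bm{K}_{ij}\bm{\Theta}_j$, the semi-definiteness $\bm{K}\succeq 0$ from \eqref{eq:K} gives $a(\bm{x}) \geq 0$ pointwise, and the constraint $\sum_{i,j=1}^d \LRp{\epsilon_k \bm{K}_{ij}\bm{\Theta}_j, \bm{\Theta}_i}_{D^k} \geq -\min(0,\delta_k(\bm{u}_h))$ collapses to $\LRp{\epsilon_k, a}_{D^k} \geq c$, where $c = -\min(0,\delta_k(\bm{u}_h)) \geq 0$. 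The problem thus becomes: minimize $\nor{\epsilon_k}^2_{D^k}$ subject to $\LRp{\epsilon_k, a}_{D^k} \geq c$ and $\epsilon_k(\bm{x}) \geq 0$ a.e.\ on $D^k$.

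The trivial case $\delta_k(\bm{u}_h) \geq 0$ (so $c = 0$) is immediate: $\epsilon_k \equiv 0$ is feasible and attains the global minimum, which matches \eqref{eq:explicit_sol}. For $c > 0$, I would temporarily drop the non-negativity constraint and solve the relaxed problem $\min \nor{\epsilon_k}^2_{D^k}$ subject to $\LRp{\epsilon_k, a}_{D^k} \geq c$. For any feasible $\epsilon_k$, Cauchy--Schwarz yields $c \leq \LRp{\epsilon_k, a}_{D^k} \leq \nor{\epsilon_k}_{D^k}\nor{a}_{D^k}$, so $\nor{\epsilon_k}_{D^k} \geq c/\nor{a}_{D^k}$, with equality attained precisely when $\epsilon_k$ is a scalar multiple of $a$ with $\LRp{\epsilon_k, a}_{D^k} = c$. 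The unique such element is $\epsilon_k = \frac{c}{\nor{a}^2_{D^k}}\,a$. (Equivalently, one may write down the KKT conditions for this convex program; stationarity gives $\epsilon_k$ proportional to $a$, and complementary slackness fixes the constant, but the Cauchy--Schwarz argument is cleaner and self-contained.)

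It then remains to verify feasibility for the \emph{full} problem. Since $a \geq 0$ and $c > 0$, the relaxed minimizer $\epsilon_k = \frac{c}{\nor{a}^2_{D^k}}\,a$ is itself non-negative, so it satisfies the dropped constraint. Because relaxing a constraint can only lower the minimum, and this relaxed minimizer is feasible for the constrained problem, it must also solve the original problem; substituting $c = -\min(0,\delta_k(\bm{u}_h))$ recovers \eqref{eq:explicit_sol}. The only real subtlety, and the step I would flag, is confirming that the pointwise non-negativity constraint is inactive: this hinges entirely on $a \geq 0$, which in turn comes from $\bm{K}\succeq 0$. The formula also presumes $\nor{a}_{D^k} > 0$, which is exactly the non-degenerate regime of interest; if $a \equiv 0$ then the constraint forces $c = 0$ and one is back in the trivial case handled above.
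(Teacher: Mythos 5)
Your proof is correct and follows essentially the same route as the paper's: reduce the dissipation constraint to a single inner product $\LRp{\epsilon_k, a}_{D^k} \geq c$, use Cauchy--Schwarz to bound $\nor{\epsilon_k}_{D^k}$ from below, and verify that $\epsilon_k = c\,a/\nor{a}^2_{D^k}$ attains the bound while remaining non-negative since $a \geq 0$. Your explicit treatment of the trivial case $c=0$ and the degenerate case $a \equiv 0$ is a minor refinement the paper leaves implicit, but the core argument is identical.
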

\begin{proof}
The optimization problem can be rewritten in a more abstract form:
\begin{gather}
\min_\epsilon \nor{\epsilon}^2 \nonumber\\
\LRp{a, \epsilon} \geq b \label{eq:opt_abstract}\\
\epsilon \geq 0, \quad \forall \bm{x} \in D^k. \nonumber
\end{gather}
where $a(\bm{x}), b \geq 0$ and we have dropped the $k, D^k$ subscripts for simplicity of notation. Note that $a(\bm{x}) = \sum_{i,j=1}^d \bm{\Theta}_i^T \bm{K}_{ij}\bm{\Theta}_j$ and $b = -\min(0, \delta_k(\bm{u}_h))$ recovers the optimization problem in Lemma~\ref{lemma:opt}.

Observe that if $\epsilon'$ satisfies $b \leq \LRp{a, \epsilon'}$, then $b \leq \LRp{a, \epsilon'} \leq \nor{a}\nor{\epsilon'}$ and $b / \nor{a} \leq \nor{\epsilon'}$. Thus, if we can find an $\epsilon \geq 0$ that meets the lower bound such that $\nor{\epsilon} = b / \nor{a}$, the solution is both feasible and optimal. One can verify that $\epsilon = b\frac{a}{\nor{a}^2} \label{eq:opt_analytic}$ satisfies both conditions.
\end{proof}
\begin{remark}
We note that the proof of Lemma~\ref{lemma:opt} implies that \eqref{eq:opt_abstract} is equivalent to an equality constrained minimum norm problem $\min_\epsilon \nor{\epsilon}^2$ such that $\LRp{a, \epsilon} = b$. 
\end{remark}

Note that, if $a$ is constant over an element, \eqref{eq:explicit_sol} reduces to the piecewise constant \gnote{entropy correction artificial viscosity} coefficient in Section~\ref{sec:piecewise_const}. To avoid division by small numbers, we again compute the ratio in \eqref{eq:opt_abstract} using the regularized ratio described in Remark~\ref{remark:ratio}.

\begin{remark}
We observe in numerical experiments that, even when using the subcell \gnote{entropy correction artificial viscosity} $\epsilon_k(\bm{u}_h)$, evaluating the viscous matrices $\bm{K}_{ij}$ at the average solution state as described in Remark~\ref{remark:K_avg} does not appear to degrade accuracy, results in a larger maximum stable time-step, and produces smaller spurious oscillations around shocks and under-resolved solution features. Thus, we assume that $\bm{K}_{ij}$ is evaluated at averaged solution states $\bar{\bm{u}}_h$ for both the case when $\epsilon_k(\bm{u}_h)$ is piecewise constant and has subcell variations.
\end{remark}

\begin{figure}
\centering
\subfloat[$L^2$ error (density wave)]{\includegraphics[width=.32\textwidth]{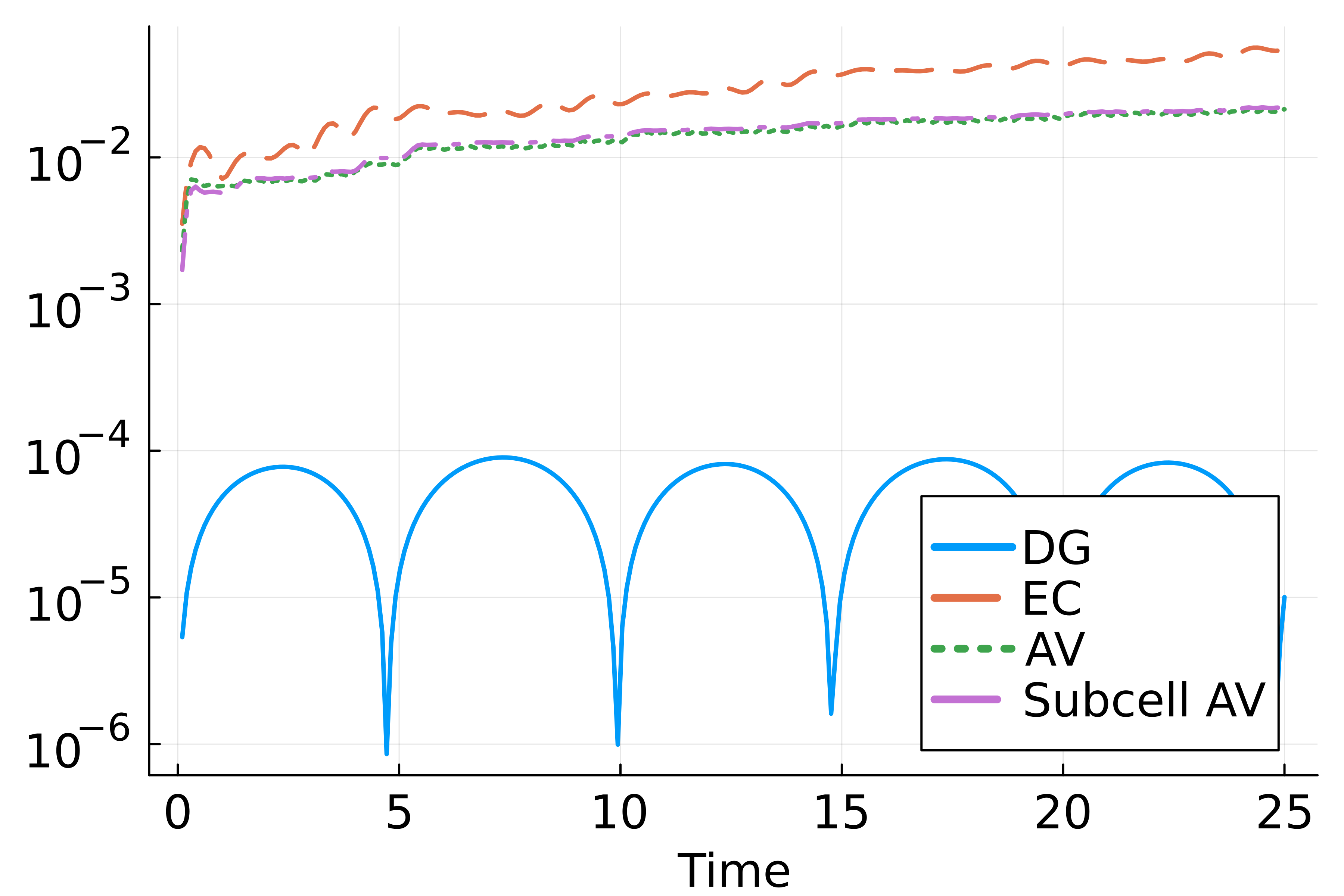}}
\hspace{.1em}
\subfloat[Entropy (density wave)]{\includegraphics[width=.32\textwidth]{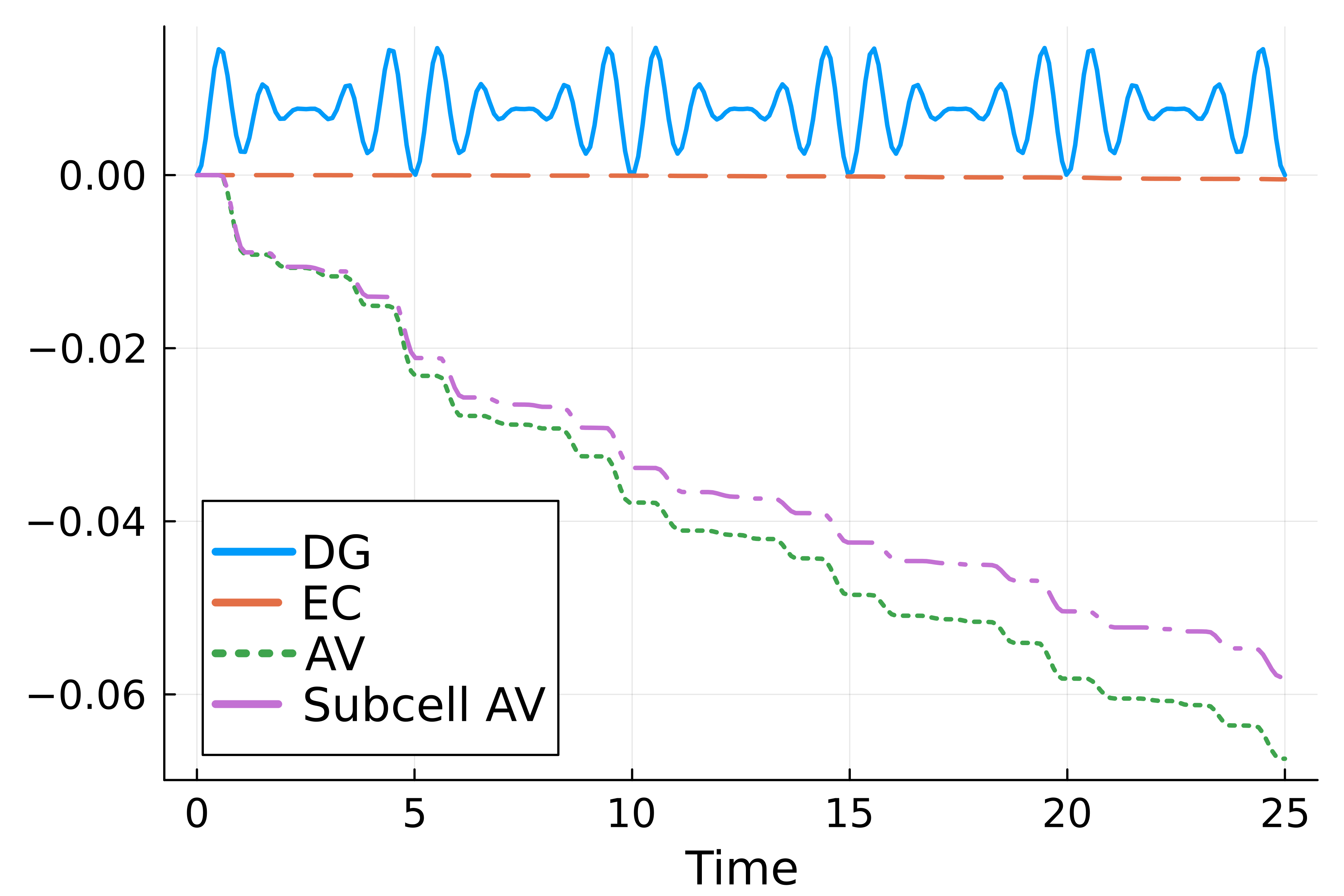}}
\hspace{.1em}
\subfloat[Zoom of density (Shu-Osher)]{\includegraphics[width=.32\textwidth]{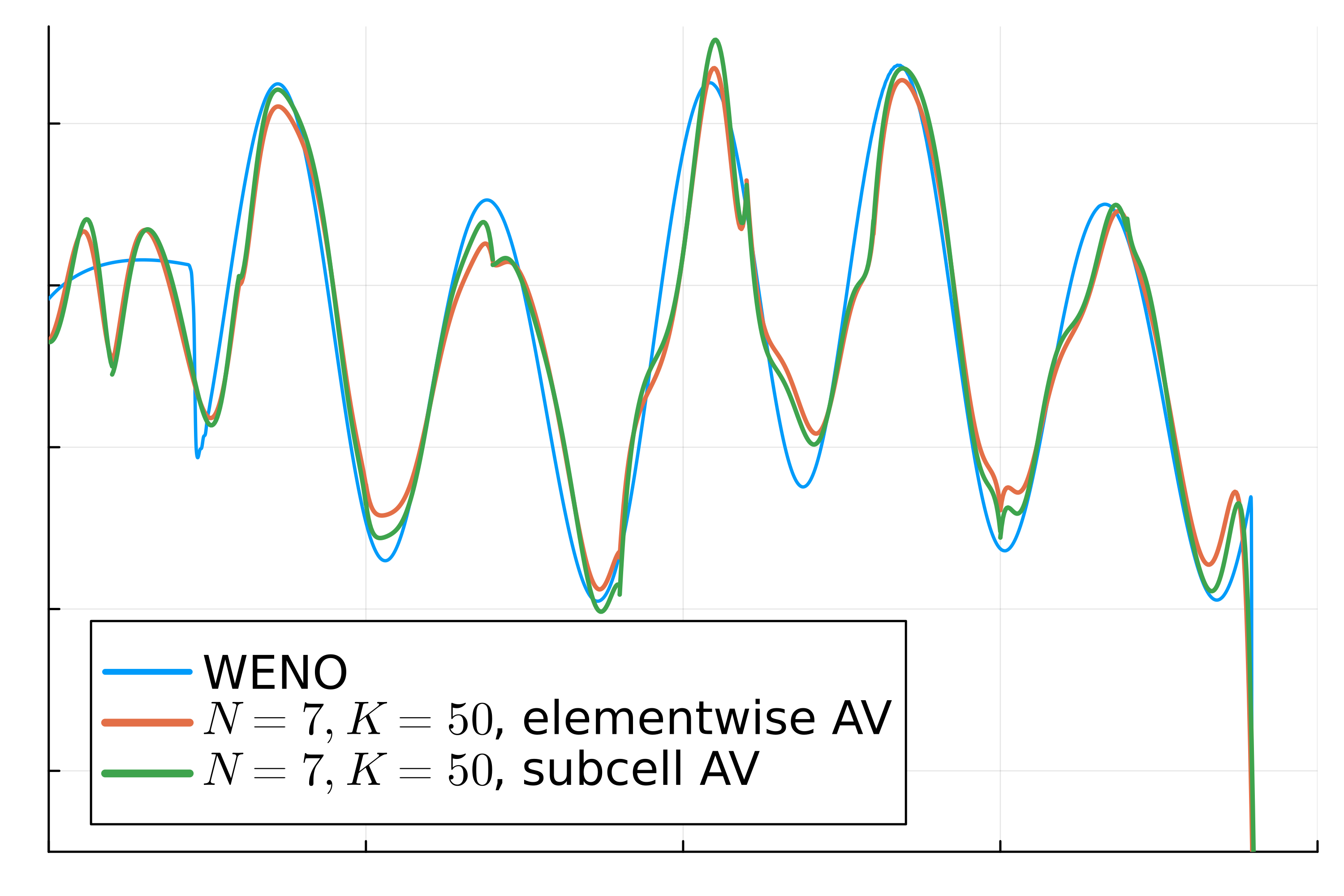}}
\caption{Comparison of element-wise constant and subcell \gnote{entropy correction artificial viscosity} for the 1D density wave and Shu-Osher problems using a degree $N=7$ DG approximation.}
\label{fig:elementwise_vs_subcell}
\end{figure}

Despite the norm-minimizing nature of the subcell \gnote{entropy correction artificial viscosity}, there does not appear to be a significant difference between the piecewise constant and subcell \gnote{entropy correction artificial viscosity} in practice. Figure~\ref{fig:elementwise_vs_subcell} illustrates these differences for the 1D density wave \eqref{eq:density_wave_1d} with amplitude $A=.98$ and the Shu-Osher sine-shock interaction problem. Both problems use degree $N=7$; the density wave uses a mesh of $4$ elements, while the Shu-Osher problem uses a mesh of $50$ elements. The subcell \gnote{entropy correction artificial viscosity} results in slightly less dissipative results for the density wave and the Shu-Osher problem. This effect becomes less pronounced for smaller polynomial degrees.

\bibliographystyle{plain}
\bibliography{reference.bib}

\begin{thebibliography}{10}

\bibitem{abgrall2018general}
Remi Abgrall.
\newblock A general framework to construct schemes satisfying additional
  conservation relations. application to entropy conservative and entropy
  dissipative schemes.
\newblock {\em Journal of Computational Physics}, 372:640--666, 2018.

\bibitem{abgrall2022reinterpretation}
R{\'e}mi Abgrall, Philipp {\"O}ffner, and Hendrik Ranocha.
\newblock {Reinterpretation and extension of entropy correction terms for
  residual distribution and discontinuous Galerkin schemes: application to
  structure preserving discretization}.
\newblock {\em Journal of Computational Physics}, 453:110955, 2022.

\bibitem{andrews2024high}
Boris~D Andrews and Patrick~E Farrell.
\newblock High-order conservative and accurately dissipative numerical
  integrators via auxiliary variables.
\newblock {\em arXiv preprint arXiv:2407.11904}, 2024.

\bibitem{barter2010shock}
Garrett~E Barter and David~L Darmofal.
\newblock {Shock capturing with PDE-based artificial viscosity for DGFEM: Part
  I. Formulation}.
\newblock {\em Journal of Computational Physics}, 229(5):1810--1827, 2010.

\bibitem{barth1999numerical}
Timothy~J Barth.
\newblock Numerical methods for gasdynamic systems on unstructured meshes.
\newblock In {\em An Introduction to Recent Developments in Theory and Numerics
  for Conservation Laws: Proceedings of the International School on Theory and
  Numerics for Conservation Laws, Freiburg/Littenweiler, October 20--24, 1997},
  pages 195--285. Springer, 1999.

\bibitem{batten1997choice}
Paul Batten, Nicholas Clarke, Claire Lambert, and Derek~M Causon.
\newblock {On the choice of wavespeeds for the HLLC Riemann solver}.
\newblock {\em SIAM Journal on Scientific Computing}, 18(6):1553--1570, 1997.

\bibitem{berthon2023artificial}
Christophe Berthon, Manuel~J Castro~D{\'\i}az, Arnaud Duran, Tom{\'a}s
  Morales~de Luna, and Khaled Saleh.
\newblock Artificial viscosity to get both robustness and discrete entropy
  inequalities.
\newblock {\em Journal of Scientific Computing}, 97(3):65, 2023.

\bibitem{brenner2008mathematical}
Susanne~C Brenner.
\newblock {\em The mathematical theory of finite element methods}.
\newblock Springer, 2008.

\bibitem{carpenter2014entropy}
Mark~H Carpenter, Travis~C Fisher, Eric~J Nielsen, and Steven~H Frankel.
\newblock {Entropy stable spectral collocation schemes for the Navier-Stokes
  equations: Discontinuous interfaces}.
\newblock {\em SIAM Journal on Scientific Computing}, 36(5):B835--B867, 2014.

\bibitem{chan2017gpu}
Jesse Chan.
\newblock {GPU-accelerated Bernstein--Bézier discontinuous Galerkin methods
  for wave problems}.
\newblock {\em SIAM Journal on Scientific Computing}, 39(2):A628--A654, 2017.

\bibitem{chan2018discretely}
Jesse Chan.
\newblock {On discretely entropy conservative and entropy stable discontinuous
  Galerkin methods}.
\newblock {\em Journal of Computational Physics}, 362:346--374, 2018.

\bibitem{chan2019skew}
Jesse Chan.
\newblock {Skew-symmetric entropy stable modal discontinuous Galerkin
  formulations}.
\newblock {\em Journal of Scientific Computing}, 81(1):459--485, 2019.

\bibitem{chan2022entropy}
Jesse Chan, Yimin Lin, and Tim Warburton.
\newblock {Entropy stable modal discontinuous Galerkin schemes and wall
  boundary conditions for the compressible Navier-Stokes equations}.
\newblock {\em Journal of Computational Physics}, 448:110723, 2022.

\bibitem{chan2022entropyprojection}
Jesse Chan, Hendrik Ranocha, Andr{\'e}s~M Rueda-Ram{\'\i}rez, Gregor Gassner,
  and Tim Warburton.
\newblock {On the entropy projection and the robustness of high order entropy
  stable discontinuous Galerkin schemes for under-resolved flows}.
\newblock {\em Frontiers in Physics}, 10:898028, 2022.

\bibitem{chan2017penalty}
Jesse Chan and T~Warburton.
\newblock {On the penalty stabilization mechanism for upwind discontinuous
  Galerkin formulations of first order hyperbolic systems}.
\newblock {\em Computers \& Mathematics with Applications}, 74(12):3099--3110,
  2017.

\bibitem{chen2017entropy}
Tianheng Chen and Chi-Wang Shu.
\newblock {Entropy stable high order discontinuous Galerkin methods with
  suitable quadrature rules for hyperbolic conservation laws}.
\newblock {\em Journal of Computational Physics}, 345:427--461, 2017.

\bibitem{chen2020review}
Tianheng Chen and Chi-Wang Shu.
\newblock {Review of entropy stable discontinuous Galerkin methods for systems
  of conservation laws on unstructured simplex meshes}.
\newblock {\em CSIAM Transactions on Applied Mathematics}, 1(1):1--52, 2020.

\bibitem{cockburn2007analysis}
Bernardo Cockburn and Bo~Dong.
\newblock {An analysis of the minimal dissipation local discontinuous Galerkin
  method for convection--diffusion problems}.
\newblock {\em Journal of Scientific Computing}, 32:233--262, 2007.

\bibitem{cockburn1998local}
Bernardo Cockburn and Chi-Wang Shu.
\newblock {The local discontinuous Galerkin method for time-dependent
  convection-diffusion systems}.
\newblock {\em SIAM journal on numerical analysis}, 35(6):2440--2463, 1998.

\bibitem{colombo2022entropy}
Alessandro Colombo, Andrea Crivellini, and Alessandra Nigro.
\newblock {On the entropy conserving/stable implicit DG discretization of the
  Euler equations in entropy variables}.
\newblock {\em Computers \& Fluids}, 232:105198, 2022.

\bibitem{crean2018entropy}
Jared Crean, Jason~E Hicken, David C Del~Rey Fern{\'a}ndez, David~W Zingg, and
  Mark~H Carpenter.
\newblock {Entropy-stable summation-by-parts discretization of the Euler
  equations on general curved elements}.
\newblock {\em Journal of Computational Physics}, 356:410--438, 2018.

\bibitem{dafermos2005hyperbolic}
Constantine~M Dafermos and Constantine~M Dafermos.
\newblock {\em Hyperbolic conservation laws in continuum physics}, volume~3.
\newblock Springer, 2005.

\bibitem{davis1988simplified}
SF~Davis.
\newblock {Simplified second-order Godunov-type methods}.
\newblock {\em SIAM Journal on Scientific and Statistical Computing},
  9(3):445--473, 1988.

\bibitem{discacciati2020controlling}
Niccolo Discacciati, Jan~S Hesthaven, and Deep Ray.
\newblock {Controlling oscillations in high-order discontinuous Galerkin
  schemes using artificial viscosity tuned by neural networks}.
\newblock {\em Journal of Computational Physics}, 409:109304, 2020.

\bibitem{edoh2024conservative}
Ayaboe~K Edoh.
\newblock Conservative correction procedures utilizing artificial dissipation
  operators.
\newblock {\em Journal of Computational Physics}, 504:112880, 2024.

\bibitem{fekete2022embedded}
Imre Fekete, Sidafa Conde, and John~N Shadid.
\newblock Embedded pairs for optimal explicit strong stability preserving
  runge--kutta methods.
\newblock {\em Journal of Computational and Applied Mathematics}, 412:114325,
  2022.

\bibitem{fjordholm2017construction}
Ulrik~S Fjordholm, Roger K{\"a}ppeli, Siddhartha Mishra, and Eitan Tadmor.
\newblock {Construction of approximate entropy measure-valued solutions for
  hyperbolic systems of conservation laws}.
\newblock {\em Foundations of Computational Mathematics}, 17(3):763--827, 2017.

\bibitem{gaburro2023high}
Elena Gaburro, Philipp {\"O}ffner, Mario Ricchiuto, and Davide Torlo.
\newblock {High order entropy preserving ADER-DG schemes}.
\newblock {\em Applied Mathematics and Computation}, 440:127644, 2023.

\bibitem{gassner2022stability}
Gregor~J Gassner, Magnus Sv{\"a}rd, and Florian~J Hindenlang.
\newblock Stability issues of entropy-stable and/or split-form high-order
  schemes: Analysis of linear stability.
\newblock {\em Journal of Scientific Computing}, 90:1--36, 2022.

\bibitem{gassner2021novel}
Gregor~J Gassner and Andrew~R Winters.
\newblock {A novel robust strategy for discontinuous Galerkin methods in
  computational fluid mechanics: Why? When? What? Where?}
\newblock {\em Frontiers in Physics}, 8:500690, 2021.

\bibitem{gassner2018br1}
Gregor~J Gassner, Andrew~R Winters, Florian~J Hindenlang, and David~A Kopriva.
\newblock {The BR1 scheme is stable for the compressible Navier--Stokes
  equations}.
\newblock {\em Journal of Scientific Computing}, 77(1):154--200, 2018.

\bibitem{gassner2016split}
Gregor~J Gassner, Andrew~R Winters, and David~A Kopriva.
\newblock {Split form nodal discontinuous Galerkin schemes with
  summation-by-parts property for the compressible Euler equations}.
\newblock {\em Journal of Computational Physics}, 327:39--66, 2016.

\bibitem{gkanis2021new}
Ioannis Gkanis and Charalambos Makridakis.
\newblock A new class of entropy stable schemes for hyperbolic systems: Finite
  element methods.
\newblock {\em Mathematics of Computation}, 90(330):1663--1699, 2021.

\bibitem{glaubitz2019smooth}
Jan Glaubitz, AC~Nogueira, Jo{\~a}o~LS Almeida, RF~Cant{\~a}o, and CAC Silva.
\newblock {Smooth and compactly supported viscous sub-cell shock capturing for
  discontinuous Galerkin methods}.
\newblock {\em Journal of Scientific Computing}, 79:249--272, 2019.

\bibitem{glaubitz2018artificial}
Jan Glaubitz, Philipp {\"O}ffner, Hendrik Ranocha, and Thomas Sonar.
\newblock Artificial viscosity for correction procedure via reconstruction
  using summation-by-parts operators.
\newblock In Christian Klingenberg and Michael Westdickenberg, editors, {\em
  Theory, Numerics and Applications of Hyperbolic Problems II}, pages 363--375,
  Cham, 2018. Springer International Publishing.

\bibitem{godlewski2013numerical}
Edwige Godlewski and Pierre-Arnaud Raviart.
\newblock {\em Numerical approximation of hyperbolic systems of conservation
  laws}, volume 118.
\newblock Springer Science \& Business Media, 2013.

\bibitem{guermond2018second}
Jean-Luc Guermond, Murtazo Nazarov, Bojan Popov, and Ignacio Tomas.
\newblock {Second-order invariant domain preserving approximation of the Euler
  equations using convex limiting}.
\newblock {\em SIAM Journal on Scientific Computing}, 40(5):A3211--A3239, 2018.

\bibitem{guermond2011entropy}
Jean-Luc Guermond, Richard Pasquetti, and Bojan Popov.
\newblock Entropy viscosity method for nonlinear conservation laws.
\newblock {\em Journal of Computational Physics}, 230(11):4248--4267, 2011.

\bibitem{guermond2014viscous}
Jean-Luc Guermond and Bojan Popov.
\newblock {Viscous regularization of the Euler equations and entropy
  principles}.
\newblock {\em SIAM Journal on Applied Mathematics}, 74(2):284--305, 2014.

\bibitem{guermond2019invariant}
Jean-Luc Guermond, Bojan Popov, and Ignacio Tomas.
\newblock {Invariant domain preserving discretization-independent schemes and
  convex limiting for hyperbolic systems}.
\newblock {\em Computer Methods in Applied Mechanics and Engineering},
  347:143--175, 2019.

\bibitem{harten1983symmetric}
Amiram Harten.
\newblock On the symmetric form of systems of conservation laws with entropy.
\newblock {\em Journal of computational physics}, 49, 1983.

\bibitem{hennemann2021provably}
Sebastian Hennemann, Andr{\'e}s~M Rueda-Ram{\'\i}rez, Florian~J Hindenlang, and
  Gregor~J Gassner.
\newblock {A provably entropy stable subcell shock capturing approach for high
  order split form DG for the compressible Euler equations}.
\newblock {\em Journal of Computational Physics}, 426:109935, 2021.

\bibitem{hesthaven2007nodal}
Jan~S Hesthaven and Tim Warburton.
\newblock {\em {Nodal discontinuous Galerkin methods: algorithms, analysis, and
  applications}}.
\newblock Springer Science \& Business Media, 2007.

\bibitem{hindenlang2020order}
Florian~J Hindenlang and Gregor~J Gassner.
\newblock {On the order reduction of entropy stable DGSEM for the compressible
  Euler equations}.
\newblock In {\em Spectral and High Order Methods for Partial Differential
  Equations ICOSAHOM 2018: Selected Papers from the ICOSAHOM Conference,
  London, UK, July 9-13, 2018}, pages 21--44. Springer International
  Publishing, 2020.

\bibitem{huang2017error}
Juntao Huang and Chi-Wang Shu.
\newblock {Error estimates to smooth solutions of semi-discrete discontinuous
  Galerkin methods with quadrature rules for scalar conservation laws}.
\newblock {\em Numerical Methods for Partial Differential Equations},
  33(2):467--488, 2017.

\bibitem{hughes1986new}
Thomas~JR Hughes, LP~Franca, and M~Mallet.
\newblock {A new finite element formulation for computational fluid dynamics:
  I. Symmetric forms of the compressible Euler and Navier-Stokes equations and
  the second law of thermodynamics}.
\newblock {\em Computer Methods in Applied Mechanics and Engineering},
  54(2):223--234, 1986.

\bibitem{jiang1994cell}
Guang~Shan Jiang and Chi-Wang Shu.
\newblock {On a cell entropy inequality for discontinuous Galerkin methods}.
\newblock {\em Mathematics of Computation}, 62(206):531--538, 1994.

\bibitem{john2016stable}
Lorenz John, Michael Neilan, and Iain Smears.
\newblock {Stable discontinuous Galerkin FEM without penalty parameters}.
\newblock In {\em Numerical mathematics and advanced applications ENUMATH
  2015}, pages 165--173. Springer, 2016.

\bibitem{karniadakis2005spectral}
George Karniadakis and Spencer~J Sherwin.
\newblock {\em Spectral/hp element methods for computational fluid dynamics}.
\newblock Oxford University Press, USA, 2005.

\bibitem{klockner2011viscous}
Andreas Kl{\"o}ckner, Tim Warburton, and Jan~S Hesthaven.
\newblock {Viscous shock capturing in a time-explicit discontinuous Galerkin
  method}.
\newblock {\em Mathematical Modelling of Natural Phenomena}, 6(3):57--83, 2011.

\bibitem{kopriva2010quadrature}
David~A Kopriva and Gregor Gassner.
\newblock {On the quadrature and weak form choices in collocation type
  discontinuous Galerkin spectral element methods}.
\newblock {\em Journal of Scientific Computing}, 44:136--155, 2010.

\bibitem{kornelus2017scaling}
Adeline Kornelus and Daniel Appel{\"o}.
\newblock On the scaling of entropy viscosity in high order methods.
\newblock In {\em Spectral and High Order Methods for Partial Differential
  Equations ICOSAHOM 2016: Selected Papers from the ICOSAHOM conference, June
  27-July 1, 2016, Rio de Janeiro, Brazil}, pages 175--187. Springer, 2017.

\bibitem{kraaijevanger1991contractivity}
Johannes Franciscus Bernardus~Maria Kraaijevanger.
\newblock {Contractivity of Runge-Kutta methods}.
\newblock {\em BIT Numerical Mathematics}, 31(3):482--528, 1991.

\bibitem{kurganov2002solution}
Alexander Kurganov and Eitan Tadmor.
\newblock {Solution of two-dimensional Riemann problems for gas dynamics
  without Riemann problem solvers}.
\newblock {\em Numerical Methods for Partial Differential Equations: An
  International Journal}, 18(5):584--608, 2002.

\bibitem{lin2023high}
Yimin Lin and Jesse Chan.
\newblock {High order entropy stable discontinuous Galerkin spectral element
  methods through subcell limiting}.
\newblock {\em Journal of Computational Physics}, 498:112677, 2024.

\bibitem{lin2023positivity}
Yimin Lin, Jesse Chan, and Ignacio Tomas.
\newblock {A positivity preserving strategy for entropy stable discontinuous
  Galerkin discretizations of the compressible Euler and Navier-Stokes
  equations}.
\newblock {\em Journal of Computational Physics}, 475:111850, 2023.

\bibitem{lv2014taming}
Y~Lv and M~Ihme.
\newblock {Taming nonlinear instability for discontinuous Galerkin scheme with
  artificial viscosity}.
\newblock {\em Center for Turbulence Research, Annual Research Briefs}, 2014.

\bibitem{lv2016entropy}
Yu~Lv, Yee~Chee See, and Matthias Ihme.
\newblock {An entropy-residual shock detector for solving conservation laws
  using high-order discontinuous Galerkin methods}.
\newblock {\em Journal of Computational Physics}, 322:448--472, 2016.

\bibitem{majda1979numerical}
Andrew Majda and Stanley Osher.
\newblock Numerical viscosity and the entropy condition.
\newblock {\em Communications on Pure and Applied Mathematics}, 32(6):797--838,
  1979.

\bibitem{mantri2024fully}
Yogiraj Mantri, Philipp {\"O}ffner, and Mario Ricchiuto.
\newblock {Fully well-balanced entropy controlled discontinuous Galerkin
  spectral element method for shallow water flows: global flux quadrature and
  cell entropy correction}.
\newblock {\em Journal of Computational Physics}, 498:112673, 2024.

\bibitem{mateo2022entropy}
Andr{\'e}s Mateo-Gab{\'\i}n, Juan Manzanero, and Eusebio Valero.
\newblock {An entropy stable spectral vanishing viscosity for discontinuous
  Galerkin schemes: application to shock capturing and LES models}.
\newblock {\em Journal of Computational Physics}, 471:111618, 2022.

\bibitem{michoski2016comparison}
Craig Michoski, Clint Dawson, Ethan~J Kubatko, Damrongsak Wirasaet, S~Brus, and
  Joannes~J Westerink.
\newblock {A comparison of artificial viscosity, limiters, and filters, for
  high order discontinuous Galerkin solutions in nonlinear settings}.
\newblock {\em Journal of Scientific Computing}, 66:406--434, 2016.

\bibitem{montoya2024efficient}
Tristan Montoya and David~W Zingg.
\newblock Efficient entropy-stable discontinuous spectral-element methods using
  tensor-product summation-by-parts operators on triangles and tetrahedra.
\newblock {\em Journal of Computational Physics}, 516:113360, 2024.

\bibitem{persson2006sub}
Per-Olof Persson and Jaime Peraire.
\newblock {Sub-cell shock capturing for discontinuous Galerkin methods}.
\newblock In {\em 44th AIAA Aerospace Sciences Meeting and Exhibit}, page 112,
  2006.

\bibitem{rackauckas2017differentialequations}
Christopher Rackauckas and Qing Nie.
\newblock {Differentialequations.jl--a performant and feature-rich ecosystem
  for solving differential equations in Julia}.
\newblock {\em Journal of open research software}, 5(1):15--15, 2017.

\bibitem{ranocha2020entropy}
Hendrik Ranocha.
\newblock {Entropy conserving and kinetic energy preserving numerical methods
  for the Euler equations using summation-by-parts operators}.
\newblock {\em Spectral and high order methods for partial differential
  equations ICOSAHOM 2018}, 134:525--535, 2020.

\bibitem{ranocha2022optimized}
Hendrik Ranocha, Lisandro Dalcin, Matteo Parsani, and David~I Ketcheson.
\newblock {Optimized Runge-Kutta methods with automatic step size control for
  compressible computational fluid dynamics}.
\newblock {\em Communications on Applied Mathematics and Computation},
  4(4):1191--1228, 2022.

\bibitem{ranocha2021preventing}
Hendrik Ranocha and Gregor~J Gassner.
\newblock Preventing pressure oscillations does not fix local linear stability
  issues of entropy-based split-form high-order schemes.
\newblock {\em Communications on Applied Mathematics and Computation}, pages
  1--24, 2021.

\bibitem{ranocha2023efficient}
Hendrik Ranocha, Michael Schlottke-Lakemper, Jesse Chan, Andr{\'e}s~M
  Rueda-Ram{\'\i}rez, Andrew~R Winters, Florian Hindenlang, and Gregor~J
  Gassner.
\newblock {Efficient implementation of modern entropy stable and kinetic energy
  preserving discontinuous Galerkin methods for conservation laws}.
\newblock {\em ACM Transactions on Mathematical Software}, 49(4):1--30, 2023.

\bibitem{ranocha2022adaptive}
Hendrik Ranocha, Michael Schlottke-Lakemper, Andrew~R Winters, Erik Faulhaber,
  Jesse Chan, and Gregor~J Gassner.
\newblock {Adaptive numerical simulations with Trixi. jl: A case study of Julia
  for scientific computing}.
\newblock In {\em Proceedings of the JuliaCon Conferences}, volume~1, page~77,
  2022.

\bibitem{revels2016forward}
Jarrett Revels, Miles Lubin, and Theodore Papamarkou.
\newblock {Forward-mode automatic differentiation in Julia}.
\newblock {\em arXiv preprint arXiv:1607.07892}, 2016.

\bibitem{sayyari2021development}
Mohammed Sayyari, Lisandro Dalcin, and Matteo Parsani.
\newblock {Development and analysis of entropy stable no-slip wall boundary
  conditions for the Eulerian model for viscous and heat conducting
  compressible flows}.
\newblock {\em Partial Differential Equations and Applications}, 2:1--27, 2021.

\bibitem{sherwin20062d}
SJ~Sherwin, RM~Kirby, J~Peir{\'o}, RL~Taylor, and OC~Zienkiewicz.
\newblock {On 2D elliptic discontinuous Galerkin methods}.
\newblock {\em International journal for numerical methods in engineering},
  65(5):752--784, 2006.

\bibitem{shu2009general}
Chi-Wang Shu.
\newblock {Discontinuous Galerkin methods: general approach and stability}.
\newblock {\em Numerical solutions of partial differential equations},
  201:149--201, 2009.

\bibitem{shu2009high}
Chi-Wang Shu.
\newblock {High order weighted essentially nonoscillatory schemes for
  convection dominated problems}.
\newblock {\em SIAM review}, 51(1):82--126, 2009.

\bibitem{svard2024refining}
Magnus Sv{\"a}rd.
\newblock {Refining the diffusive compressible Euler model}.
\newblock {\em Physica A: Statistical Mechanics and its Applications},
  635:129474, 2024.

\bibitem{svard2024entropy}
Magnus Sv{\"a}rd and Anita Gjesteland.
\newblock {Entropy stable far-field boundary conditions for the compressible
  Navier-Stokes equations}.
\newblock {\em Journal of Computational Physics}, 508:113031, 2024.

\bibitem{tadmor1987numerical}
Eitan Tadmor.
\newblock {The numerical viscosity of entropy stable schemes for systems of
  conservation laws. I}.
\newblock {\em Mathematics of Computation}, 49(179):91--103, 1987.

\bibitem{tadmor2003entropy}
Eitan Tadmor.
\newblock {Entropy stability theory for difference approximations of nonlinear
  conservation laws and related time-dependent problems}.
\newblock {\em Acta Numerica}, 12(1):451--512, 2003.

\bibitem{toro2013riemann}
Eleuterio~F Toro.
\newblock {\em Riemann solvers and numerical methods for fluid dynamics: a
  practical introduction}.
\newblock Springer Science \& Business Media, 2013.

\bibitem{vilar2024local}
Fran{\c{c}}ois Vilar.
\newblock {Local subcell monolithic DG/FV convex property preserving scheme on
  unstructured grids and entropy consideration}.
\newblock {\em Journal of Computational Physics}, page 113535, 2024.

\bibitem{vonneumann1950method}
John VonNeumann and Robert~D Richtmyer.
\newblock A method for the numerical calculation of hydrodynamic shocks.
\newblock {\em Journal of applied physics}, 21(3):232--237, 1950.

\bibitem{waruszewski2022entropy}
Maciej Waruszewski, Jeremy~E Kozdon, Lucas~C Wilcox, Thomas~H Gibson, and
  Francis~X Giraldo.
\newblock {Entropy stable discontinuous Galerkin methods for balance laws in
  non-conservative form: Applications to the Euler equations with gravity}.
\newblock {\em Journal of Computational Physics}, 468:111507, 2022.

\bibitem{williams2019analysis}
David~M Williams.
\newblock {An analysis of discontinuous Galerkin methods for the compressible
  Euler equations: entropy and $L^2$ stability}.
\newblock {\em Numerische Mathematik}, 141(4):1079--1120, 2019.

\bibitem{winters2017uniquely}
Andrew~R Winters, Dominik Derigs, Gregor~J Gassner, and Stefanie Walch.
\newblock {A uniquely defined entropy stable matrix dissipation operator for
  high Mach number ideal MHD and compressible Euler simulations}.
\newblock {\em Journal of Computational Physics}, 332:274--289, 2017.

\bibitem{xiao2010numerical}
Hong Xiao and Zydrunas Gimbutas.
\newblock A numerical algorithm for the construction of efficient quadrature
  rules in two and higher dimensions.
\newblock {\em Computers \& mathematics with applications}, 59(2):663--676,
  2010.

\bibitem{yu2020study}
Jian Yu and Jan~S Hesthaven.
\newblock {A study of several artificial viscosity models within the
  discontinuous Galerkin framework}.
\newblock {\em Communications in Computational Physics}, 27(5):1309--1343,
  2020.

\bibitem{zeifang2021data}
Jonas Zeifang and Andrea Beck.
\newblock {A data-driven high order sub-cell artificial viscosity for the
  discontinuous Galerkin spectral element method}.
\newblock {\em Journal of Computational Physics}, 441:110475, 2021.

\bibitem{zingan2013implementation}
Valentin Zingan, Jean-Luc Guermond, Jim Morel, and Bojan Popov.
\newblock {Implementation of the entropy viscosity method with the
  discontinuous Galerkin method}.
\newblock {\em Computer Methods in Applied Mechanics and Engineering},
  253:479--490, 2013.

\end{thebibliography}

\end{document}